\DeclareFontFamily{U}{mathx}{\hyphenchar\font45}
\DeclareFontShape{U}{mathx}{m}{n}{
      <5> <6> <7> <8> <9> <10>
      <10.95> <12> <14.4> <17.28> <20.74> <24.88>
      mathx10
      }{}
\DeclareSymbolFont{mathx}{U}{mathx}{m}{n}
\DeclareMathAccent{\widecheck}{0}{mathx}{"71}
\DeclareMathAlphabet{\mathbbm}{U}{bbm}{m}{n}
\theoremstyle{remark}
\numberwithin{equation}{section}
\theoremstyle{definition}
\newtheorem{theorem*}{Theorem}
\newtheorem{definition*}{Definition}
\newtheorem{theorem}{Theorem}[section]
\newtheorem{definition}[theorem]{Definition}
\newtheorem{proposition}[theorem]{Proposition}
\newtheorem{proposition*}{Proposition}
\newtheorem{lemma}[theorem]{Lemma}
\newtheorem{remark}[theorem]{Remark}
\newtheorem{notation}[theorem]{Notation}
\newtheorem{example}[theorem]{Example}
\newtheorem{example*}{Example}
\newtheorem{algo}[theorem]{Algorithm}
\newcommand{\tc}[2]{\textcolor{#1}{#2}}
\newcommand{\lrp}[1]{\left(#1\right)}
\newcommand{\lrb}[1]{\left[#1\right]}
\newcommand{\lrm}[1]{\left|#1\right|}
\newcommand{\lrc}[1]{\left\{#1\right\}}
\newcommand{\lra}[1]{\langle{#1}\rangle}
\newcommand{\N}{\mathbb{N} }
\newcommand{\Q}{\mathbb{Q} }
\newcommand{\R}{\mathbb{R} }
\newcommand{\C}{\mathbb{C} }
\newcommand{\Z}{\mathbb{Z} }
\newcommand{\oversetcustom}[3][0ex]{%
  \mathrel{\mathop{#3}\limits^{
    \vbox to#1{\kern-0.5\ex@
    \hbox{$\scriptstyle#2$}\vss}}}}
\newcommand{\gv}{\mathbf{g} }
\newcommand{\vb}[1]{\mathbf{#1}}
\newcommand{\cA}{\mathcal{A} }
\newcommand{\cAp}{\mathcal{A}_{\mathrm{prin}}}
\newcommand{\prin}{{\mathrm{prin}} }
\newcommand{\cX}{\mathcal{X} }
\newcommand{\trop}{\mathrm{trop} }
\newcommand{\tf}{\vartheta }
\newcommand{\eq}[2]{\begin{equation}\label{#2} \begin{split} #1  \end{split} \end{equation}}
\newcommand{\eqn}[1]{\begin{equation*} \begin{split} #1 \end{split} \end{equation*}}
\newcommand{\Nuf}{N_{\text{uf}}}
\newcommand{\Nuft}{\widetilde{N}_{\text{uf}}}
\newcommand{\Iuf}{I_{\text{uf}}}
\newcommand{\Iuft}{\widetilde{I}_{\text{uf}}}
\newcommand{\wN}{\widetilde{N}}
\newcommand{\wM}{\widetilde{M}}
\newcommand{\wpp}{\widetilde{p}}
\newcommand{\bL}{\vb{L}}
\newcommand{\sk}[2]{\lrc{ #1 , #2 } }
\newcommand{\sgn}{\operatorname*{sgn}}
\newcommand{\wall}{\mathfrak{d}}
\newcommand{\mono}{\mathfrak{m}}
\newcommand{\frakp}{\mathfrak{p}}
\newcommand{\ray}{\mathfrak{r}}
\newcommand{\seed}{\textbf{s}}
\DeclareMathOperator{\Supp}{Supp}
\DeclareMathOperator{\Par}{\overrightarrow{\Supp}}
\DeclareMathOperator{\Sing}{Sing}
\DeclareMathOperator{\Hom}{Hom}
\DeclareMathOperator{\Spec}{Spec}
\newcommand{\scat}{\mathfrak{D}}
\newcommand{\rank}{\operatorname{rank}}
\newcommand{\red}[1]{{ #1}}
\newcommand{\change}[1]{{ #1}}
\title{Compactifications of cluster varieties and convexity}
\author{Man-Wai Cheung, Timothy Magee and Alfredo N\'ajera Ch\'avez}
\date{}
\begin{document}

\maketitle

\begin{abstract}
    
    In \cite{GHKK}, Gross-Hacking-Keel-Kontsevich discuss compactifications of cluster varieties from {\it{positive subsets}} in the real tropicalization of the mirror. To be more precise, let $\scat$ be the scattering diagram of a cluster variety $V$ (of either type-- $\cA$ or $\cX$), and let $S$ be a closed subset of $\lrp{V^\vee}^\trop\lrp{\R}$-- the ambient space of $\scat$.
     The set $S$ is positive if the theta functions corresponding to the integral points of $S$ and its $\N$-dilations define an $\N$-graded subalgebra of $\Gamma(V, \mathcal{O}_V)\change{[x]}$. In particular, a positive set $S$ defines a compactification of $V$ through a Proj construction applied to the corresponding $\N$-graded algebra.
    In this paper we give a natural convexity notion for subsets of $\lrp{V^\vee}^\trop\lrp{\R}$, called {\it{broken line convexity}}, and show that a set is positive if and only if it is broken line convex.
    The combinatorial criterion of broken line convexity provides  a tractable way  to  construct positive subsets of $\lrp{V^\vee}^\trop\lrp{\R}$, or to check positivity of a given subset.
\end{abstract}

\section{Introduction}

Cluster algebras were introduced by Fomin and Zelevinsky in \cite{FZ_I} with the initial aim of providing an algebraic/combinatorial approach to the subjects of total positivity in semisimple algebraic groups and canonical bases for quantum groups. Shortly after, the connection between cluster algebras and algebraic, hyperbolic and Poisson geometry started to be developed by Gekhtman, Shapiro and Vainshtein in \cite{GSV05,GSV10} and by Fock and Goncharov in \cite{FG_Moduli,FG_Quantization}. This geometric perspective gave rise to the notion of a cluster variety and cluster duality for cluster varieties. 
In this article we treat cluster varieties from view points of birational and toric geometry. This perspective was developed by Gross, Hacking and Keel in \cite{GHK_birational} and, together with Kontsevich, remarkably exploited in \cite{GHKK}.

From the point of view of birational geometry, cluster varieties and their (partial) minimal models generalize the notion of toric varieties (\cite{GHK_birational},\cite{BFMN18}).
Many concepts from the toric world have natural analogues in the cluster world.
For instance, recall that torus invariant irreducible divisors of a toric variety correspond to cocharacters of the defining torus.
For a cluster variety $V$, we replace the cocharacter lattice $\bL$ in toric geometry with the {\it{integral tropicalization}} of $V$, denoted $V^\trop(\Z)$.
As a set, $ V^\trop(\Z) = \bL $. In fact, by definition $V$ is endowed with an open cover of algebraic tori\footnote{See Section~\ref{sec:variety} for a more precise description.} (usually an infinite number) and every torus in the cover gives an identification of $V^\trop(\Z)$ with $\bL$. However, as the identifications differ, $V^\trop(\Z)$ does not have a natural group structure.
Morally, it encodes the logarithmic geometry of $V$.
Every cluster variety is endowed with a canonical volume form $\Omega$, 
and $V^\trop(\Z)$ is the set of divisors on varieties birational to $V$ along which $\Omega$ has a pole, up to a certain equivalence.\footnote{More precisely, it is a set of {\it{divisorial discrete valuations}}. The equivalence identifies divisors of different birational models if they give the same valuation on the field of rational functions of $V$, such as divisors associated to a ray shared by different fans in the toric case.}
Taking coefficients for divisors in $\R$ rather than $\Z$ gives a closely related space $V^\trop(\R)$, which generalizes the vector space $\bL\otimes \R$ from toric geometry.
Unlike $\bL\otimes \R$, the space $V^\trop(\R)$ has only a {\emph{piecewise}} linear structure\footnote{More precisely, $V^\trop(\R)$ possesses the structure of a singular integral affine manifold. As a topological space it is simply homeomorphic to $\R^{\text{rank}(\bL)} $.}, with each domain of linearity invariant under $\R_{>0}$ scaling.\footnote{This scaling invariance of domains of linearity is a very special feature of cluster varieties, and it is crucial to the arguments in this paper.}
Next, the cocharacter lattice of a torus is a natural basis for the regular functions of the dual torus. 
Cluster varieties also have duals, and a pair of dual cluster varieties are built out of dual tori.
Analogously to the toric case, the integral tropical points of a cluster variety $V$ (satisfying certain technical assumptions) give a basis for the regular functions of the dual $V^\vee$.
These basis elements-- the {\it{theta functions}}-- are defined using the machinery of {\it{scattering diagrams}} and {\it{broken lines}} described for cluster varieties in \cite{GHKK}.
A cluster scattering diagram for $V^\vee$ is a collection of codimension 1 cones ({\it{walls}}) in $V^\trop\lrp{\R}$, each decorated with a scattering function. The walls divide $V^\trop\lrp{\R}$ into chambers which are precisely the domains of linearity of $V^\trop\lrp{\R}$.
Broken lines are piecewise linear rays drawn in this piecewise linear manifold which are allowed to bend in a prescribed fashion at the walls.
Each linear segment is decorated with a Laurent monomial in $\C[\bL]$, and allowed bendings at a wall are determined by the scattering function and the decorating monomial.
The theta functions are expressed in local coordinates by summing decorating monomials over broken lines with a given initial direction and end point.
We will review these notions in greater detail in Section \ref{sec:basic}.
For now we simply provide an example on the left side of Figure~\ref{fig:A2Scat} to help orient the reader.


\noindent
\begin{center}
\begin{minipage}{.85\linewidth}
\captionsetup{type=figure}
\begin{center}
\begin{tikzpicture}[scale=.85]

    \def\x{1}
    \def\d{1}
    \def\l{3}
    \def\op{0}

    \path (-\l,0) coordinate (3) --++ (\l,0) coordinate (0) --++ (\l,0) coordinate (1);
    \path (0,\l) coordinate (2) --++ (0,-2*\l) coordinate (4) --++ (\l,0) coordinate (5);
    \path (5) --++ (0,2*\l) coordinate (tr) --++ (-2*\l,0) coordinate (tl) --++ (0,-2*\l) coordinate (bl);

    \draw[thick, ->] (3) -- (1);
    \draw[thick, ->] (2) -- (4);
    \draw[thick, ->] (0) -- (5);

    \node at (.9,\l) {$1+z^{(0,1)}$};
    \node at (-2.3,-.35) {$1+z^{(-1,0)}$};
    \node at (2.75,-1.5) {$1+z^{(-1,1)}$};

    \node [circle, fill, inner sep = 1.5pt, color = orange](Q) at (\x+.25,\d) {};

    \draw [thick, color=orange] (-\l,\d)--(Q);
    \draw [thick, color=orange] (-\l,\x+\d+.25)--(0,\x+\d+.25)--(Q);

    \path (Q)--(0,\d) node [pos=.4, sloped, below] {\textcolor{black}{$z^{(-1,0)}$}};
    \path (0,\x+\d+.25)--(Q) node [pos=.5, sloped, above] {\textcolor{black}{$z^{(-1,1)}$}};

    \path (-\l,\d)--(0,\d) node [pos=.35, sloped, below] {\textcolor{black}{$z^{(-1,0)}$}};
    \path (-\l,\x+\d+.25)--(0,\x+\d+.25) node [pos=.35, sloped, above] {\textcolor{black}{$z^{(-1,0)}$}};
    
\begin{scope}[xshift=7.5cm]

    \def\d{2}
    \def\l{3}
    \def\op{.25}

    \path (-\l,0) coordinate (3) --++ (\l,0) coordinate (0) --++ (\l,0) coordinate (1);
    \path (0,\l) coordinate (2) --++ (0,-2*\l) coordinate (4) --++ (\l,0) coordinate (5);
    \path (5) --++ (0,2*\l) coordinate (tr) --++ (-2*\l,0) coordinate (tl) --++ (0,-2*\l) coordinate (bl);

    \draw[thick, ->] (3) -- (1);
    \draw[thick, ->] (2) -- (4);
    \draw[thick, ->] (0) -- (5);

    \node [circle, fill, inner sep = 1.5pt, color = blue] (v1) at (\d,0) {};
    \node [circle, fill, inner sep = 1.5pt, color = blue] (v2) at (0,\d) {};
    \node [circle, fill, inner sep = 1.5pt, color = blue] (v3) at (-\d,0) {};
    \node [circle, fill, inner sep = 1.5pt, color = blue] (v4) at (0,-\d) {};
    \node [circle, fill, inner sep = 1.5pt, color = blue] (v5) at (\d,-\d) {};
    \node [circle, fill, inner sep = 1.5pt, color = blue] (v0) at (0,0) {};

    \draw[blue, thick, fill= blue, fill opacity=\op] (v1.center) -- (v2.center) -- (v3.center) -- (v4.center) -- (v5.center) -- cycle;

\end{scope}
\end{tikzpicture}

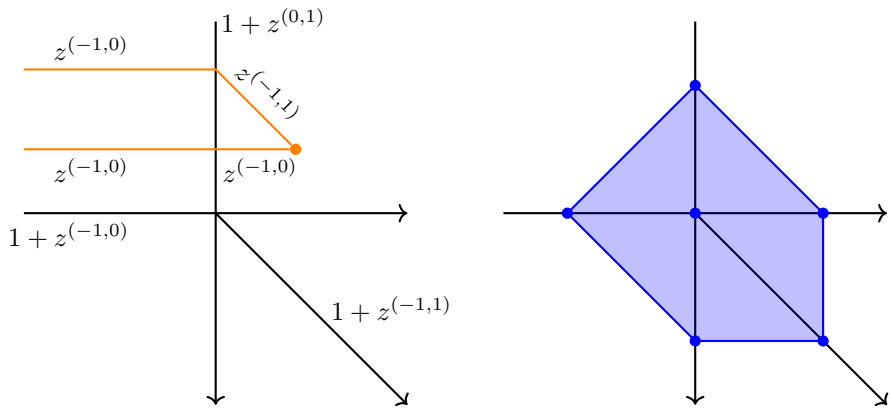
\captionof{figure}{\label{fig:A2Scat} On the left: Example of a scattering diagram and broken lines contributing to the theta function ${\tf_{\lrp{-1,0}}= z^{(-1,1)}+z^{(-1,0)}}$.
In this example $V$ is the complement of an anticanonical cycle of 5 lines in the del Pezzo surface of degree 5.
The scattering diagram contains 3 walls-- the coordinate axes and the ray in direction $(1,-1)$.
Each of the 5 chambers is a domain of linearity of $V^\trop(\R)$.
On the right: The set pictured and its dilations give the anticanonical section ring of the smooth del Pezzo surface of degree 5. (This is an instance where $V\cong V^\vee$.) See \cite[Example~8.31]{GHKK}.
} 

\end{center}
\end{minipage}
\end{center}


An important insight of \cite{GHKK} is that certain subsets of $V^\trop\lrp{\R}$, with integral points corresponding to theta functions on the dual cluster variety $V^\vee$, can be used to (partially) compactify $V^\vee$ in much the same way that convex polyhedra, with integral points corresponding to characters of a torus, can be used to define a toric variety.
To describe this, we need to discuss multiplication of theta functions.
Since theta functions form a basis for regular functions on $V^\vee$, and regular functions form a ring,
we must be able to decompose a product of theta functions as a sum of theta functions.
That is, there are structure constants $\alpha(p,q,r)$ such that
\eqn{\tf_p \cdot \tf_q = \sum_{r} \alpha(p,q,r) \tf_r.}
A combinatorial description of these structure constants in terms of counts of broken lines appears in \cite[Definition-Lemma~6.2]{GHKK}, and we will review it in Section~\ref{sec:multiplication}.
Motivated by the polytope construction of projective toric varieties, we would like to define graded rings in terms of dilations of a set $S \subset V^\trop(\R)$. 
Namely, for $a,b \in \N$ and integral points $p \in a S$ and $q \in b  S$, we would like the structure constant $\alpha(p,q,r)$ to be non-zero only when $r$ is in $(a+b)  S$. 
Such sets are called {\it{positive}} (\cite[Definition~8.6]{GHKK}). We state the definition more precisely in \thref{def:positive_set}.
A complete discussion of how positive sets in $V^\trop(\R)$ give (partial) compactifications of $V^\vee$ via a Proj construction (relative Proj construction if $S$ is unbounded) appears in \cite[Section~8.5]{GHKK}.
An example of a positive set is provided on the right side of Figure~\ref{fig:A2Scat}.

Observe that if $V$ is just an algebraic torus $\bL\otimes \C^*$,
a closed subset of $V^\trop(\R) = \bL \otimes \R$ is positive if and only if it is convex.
More generally, when working with cluster varieties we argue that broken lines provide a natural generalization of convexity:

\begin{definition}\thlabel{def:blc_intro}
A closed subset $S$ of $V^\trop(\R)$ is {\it{broken line convex}} 
if for every pair of rational points\footnote{We explain the choice of rational points in \thref{rem:Qpoints}.} $s_1$, $s_2$ in $S$,
every segment of a broken line with endpoints $s_1$ and $s_2$ is entirely contained in $S$.  
\end{definition}

Observe that positivity is equivalent to convexity in the toric case.
Since broken line convexity in $V^\trop(\R)$ directly generalizes convexity in $\bL \otimes\R$,
it would be rather nice to find that for cluster varieties positivity is equivalent to broken line convexity.
This is precisely the main result of the paper:
\begin{theorem}\thlabel{MainThmInt}
Let $V$ be a cluster variety and $S$ a closed subset of $V^\trop\lrp{\R}$.
Then $S$ is positive if and only if $S$ is broken line convex.
In particular $S$ defines a partial compactification of the dual $V^\vee$, which is an actual compactification if $S$ is bounded.\footnote{In the final sentence of \thref{MainThmInt}, there are certain assumptions on $V$ and a certain amount of ambiguity in writing ``$V^\vee$'' that we will clarify in Section~\ref{sec:variety}. \change{Furthermore, we would like to remark that we actually prove a more general result that also holds for certain quotients of $\cA$-varieties and fibers of $\cX$-varieties. See \thref{MainThm} for the precise statement.}}
\end{theorem}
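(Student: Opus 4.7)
The plan is to prove both implications by working directly with the combinatorial description of the structure constants $\alpha(p,q,r)$ (reviewed in Section~\ref{sec:multiplication}), which expresses $\alpha(p,q,r)$ as a weighted count of pairs $(\gamma_1,\gamma_2)$ of broken lines with initial directions $p$ and $q$, ending at a common generic basepoint near $r$, whose final Laurent monomials multiply to a contribution in direction $r$. The core idea is the following dictionary: a bounded segment $[s_1,s_2]$ of a single broken line $\gamma$, split at an interior point $s$, decomposes into two pieces that, after extending appropriately, are themselves broken lines meeting at $s$; conversely, a pair $(\gamma_1,\gamma_2)$ of broken lines contributing to $\alpha(p,q,r)$ can be glued into a single broken line carrying a segment with endpoints controlled by $p$ and $q$. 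Since bounded segments of broken lines lie inside a single chamber (a domain of linearity), within the segment the affine structure of $V^\trop(\R)$ is genuinely linear, so convex combinations such as $s = (a s_1 + b s_2)/(a+b)$ make literal sense.

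First I would make the dictionary precise. Given rational $s_1,s_2 \in V^\trop(\R)$ bounding a broken line segment decorated by $z^m$, and a rational $s$ in its interior, a dilation by a common positive integer renders all three points integral; writing $s = (a s_1 + b s_2)/(a+b)$ with positive integers $a,b$ then produces integer points $p = a s_1 \in aS$, $q = b s_2 \in bS$, and $r = (a+b) s = p + q$. Locally at the bends of $\gamma$ on either side of $[s_1,s_2]$, the scattering rule determines precisely how a pair $(\gamma_1,\gamma_2)$ splits off from $\gamma$ to yield two broken lines meeting near $s$; the product of their terminal monomials is governed by the scattering automorphisms together with the bends internal to each $\gamma_i$, and this product is exactly what is enumerated by $\alpha(p,q,r)$.

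For the direction \emph{broken line convex $\Rightarrow$ positive}, I take integer $p \in aS$, $q \in bS$ with $\alpha(p,q,r) \ne 0$, and pick a contributing pair $(\gamma_1,\gamma_2)$. The gluing of $\gamma_1$ with a suitable reverse-tail of $\gamma_2$ yields a single broken line $\gamma$ containing a bounded segment between points that, after the common scaling, correspond to $s_1, s_2 \in S$, with $r$ sitting on the interior of this segment at the point $s = r/(a+b)$. Broken line convexity of $S$ then places $s$ in $S$, equivalently $r \in (a+b)S$. For the direction \emph{positive $\Rightarrow$ broken line convex}, I reverse the construction: given rational $s_1,s_2 \in S$ and a broken line segment $[s_1,s_2]$, pick any rational $s$ in its interior, clear denominators to produce integers $p \in aS$, $q \in bS$, $r = p+q$, split $\gamma$ at $s$ into two sub-broken-lines, and extend them to a pair $(\gamma_1,\gamma_2)$ contributing to $\alpha(p,q,r)$. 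Positivity of $S$ then forces $r \in (a+b)S$, hence $s \in S$; since $s$ was arbitrary, the full segment lies in $S$. The compactification claim in the last sentence of \thref{MainThmInt} is then immediate from \cite[Section~8.5]{GHKK} once positivity of $S$ is established.

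The main obstacle is verifying the dictionary globally, in the presence of multiple bends in $\gamma$, $\gamma_1$, and $\gamma_2$. Locally at a single wall the correspondence is essentially the scattering rule and is straightforward. In the general case one must check that every contribution to $\alpha(p,q,r)$ indeed arises from the splitting of a valid broken line carrying the relevant segment, and conversely that the two pieces obtained by splitting always extend to admissible broken lines with no hidden cancellation or spurious terms. I expect this to require a careful induction on the number of walls involved, together with the consistency of the scattering diagram, to ensure that ambiguities in the splitting are properly accounted for. A secondary but routine technical point is passing between the integer setup (natural for structure constants and dilations) and the rational setup built into \thref{def:blc_intro}; this is handled by the $\R_{>0}$ scale invariance of the chambers, walls, and broken line bending rules.
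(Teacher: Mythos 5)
Your overall architecture agrees with the paper's at the highest level: two constructions, one turning a pair $\lrp{\gamma^{(1)},\gamma^{(2)}}$ contributing to $\alpha(p,q,r)$ into a broken line segment with endpoints $p/a$ and $q/b$ passing through $r/(a+b)$, and one inverting this. But the mechanism you propose for the dictionary rests on false premises, and these are not repairable technicalities -- they are the mathematical content of the theorem. Bounded segments of broken lines do \emph{not} lie inside a single chamber; they cross walls and bend at them (if they did not, broken line convexity would collapse to ordinary convexity and \thref{ex:G2} could not occur). Consequently your identity $r=(a+b)s=p+q$ is false whenever bending occurs: in \eqref{eq:multibrokenline} one has $r=F(\gamma^{(1)})+F(\gamma^{(2)})$, the sum of the \emph{final} slopes, which differs from $I(\gamma^{(1)})+I(\gamma^{(2)})=p+q$ exactly when the broken lines bend. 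The point $r/(a+b)$ is the weighted average of the segment's endpoints only in parametrization time (the segment passes through it at time $\frac{b}{a+b}T$), not as a convex combination in the ambient space, so the point your construction would mark on the segment is generally the wrong one.

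Relatedly, the gluing and splitting you describe do not produce objects of the right type. Concatenating $\gamma^{(1)}$ with a reversed tail of $\gamma^{(2)}$ yields a path supported on $\Supp(\gamma^{(1)})\cup\Supp(\gamma^{(2)})$, which is proper and runs to infinity in the directions $-p$ and $-q$; it does not connect the \emph{points} $p/a$ and $q/b$. The paper instead builds a genuinely new segment (\thref{algorithm_1}, \thref{key_lemma}, \thref{prop:bl2jp}) by iteratively intersecting the line through $\tilde{x}_i$ and $\mono_i/a$ with the ray $\R_{\geq 0}x_{i+1}$ -- exploiting that walls are cones, hence $\R_{>0}$-scaling invariant -- and then attaches rescaled exponent vectors $\widetilde{\mono}_i\in\rho_i(\gamma)\lambda M^\circ$, with the scaling chosen precisely so that each bend remains allowed (divisibility of $\lra{n_{0,i},\widetilde{\mono}_i}$ by $\lra{n_{0,i},\mono_i}$, together with non-negativity of scattering coefficients). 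Symmetrically, the two halves of a segment split at $\tilde{r}$ are not broken lines ending at $(a+b)\tilde{r}$; Section~\ref{sec:in_alg} must define new slopes such as $\mono_i^{(1)}=a\lrp{\tilde{x}_i^{(1)}+t_i^{(1)}\widetilde{\mono}_i^{(1)}}$ and verify \thref{lem:path} and \thref{lem:bend} before a balanced pair exists. There are also non-genericity issues (endpoints on walls, passage through $\Sing(\scat)$) that the paper handles via limits of generic broken lines and that your proposal does not address. Without some version of these constructions, the ``careful induction on the number of walls'' you defer to has nothing to induct on, so the proposal has a genuine gap at its core.
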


\thref{MainThmInt} provides a tractable way to construct positive subsets of $V^{\trop}(\R)$, or to check positivity of a given subset. 
We will need to review some properties of scattering diagrams for cluster varieties before we can adequately explain this process,
so we defer the explanation to \thref{rk:compute}.
However, the manageable approach to positive subsets $S$ of $V^\trop(\R)$ afforded by \thref{MainThmInt} leads to equally manageable (partial if $S$ is unbounded) compactifications of $V^\vee$,
without any reference to ``frozen variables''-- certain distinguished theta functions that are simply characters when restricted to any torus in the atlas used to define $V^\vee$'s cluster structure.
To emphasize this point, most familiar partial compactifications of cluster varieties (decorated flag varieties, affine cones over Grassmannians, etc.) arise by allowing certain frozen variables of the cluster variety to vanish, or by taking a quotient of such a construction (flag varieties, Grassmannians, etc.).
Using \thref{MainThmInt}, it is easy to give compactifications of cluster varieties without any reference to frozen variables. 
It would be interesting to study which projective varieties can be described as compactified cluster varieties in this way.

The proof of \thref{MainThmInt} has some interesting features that might be of independent interest. It consist of two parts. First, we introduce an algorithmic construction whose input is a pair of broken lines $(\gamma^{(1)}, \gamma^{(2)})$ balanced at $r$ and a pair of positive integers $a,b\in \N$; the output is a broken line segment whose endpoints are $\frac{I( \gamma^{(1)})}{a}$ and $\frac{I( \gamma^{(2)})}{b}$ passing through $\frac{r}{a+b}$. We use this to show that every broken line convex set is positive. Second, we introduce an algorithmic construction that produces a balanced pair of broken lines out of a broken line segment and a rational point in its relative interior. We use this to show that every positive set is broken line convex. Since the balancing condition is used to define the structure constants of the theta basis it is interesting to study further implications of these constructions. The reader is kindly invited to consult \thref{def:genbroken}, \thref{def:balanced} and the body of the text for the terminology used in this paragraph.

\red{Next, we would like to remark that a result of Keel and Yu gives a rich source of examples of broken line convex polytopes.  Gross-Hacking-Keel-Kontsevich made the following conjecture (\cite[Conjecture~8.11]{GHKK}):
\begin{center}
    {\it{If $0 \neq f$ is a regular function on a log Calabi–Yau manifold $V$
with maximal boundary, then $f^\trop : V^\trop (\R) \to \R$ is min-convex. }}    
\end{center}
See \cite[Definition~8.2]{GHKK} for the definition of {\it{min-convex}}.  
By \cite[Lemmas~8.4~and~8.9]{GHKK}, if $f^\trop$ is min-convex, the set 
\eqn{ \lrc{\left. x \in V^\trop(\R) \right| f^\trop (x) \geq -1 } }
is positive.
Keel and Yu prove this conjecture in \cite{KeelYu}.\footnote{There is a slight terminology clash between \cite{KeelYu} and the current paper. Keel and Yu refer to {\it{min-convexity}} from \cite{GHKK} as {\it{broken line convexity}}, and they refer to  \cite[Conjecture~8.11]{GHKK} as the {\it{broken line convexity conjecture}}. See \thref{def:blc} for what we call as {\it{broken line convexity}}.}}

Finally, we would like to indicate further motivation for introducing \thref{def:blc_intro} and investigating its connection to positivity.
First, we came up with the notion while pursuing a cluster-varieties-generalization of Batyrev duality for toric Fano varieties.
\thref{def:blc_intro} allows us to give what we call a {\it{theta function analogue of a Newton polytope}}, which will play a key role in the construction. 
This will be explored further in \cite{BatFin}.
Next, while attempting to finish the forthcoming \cite{Grass},
we noticed a potential connection to Newton-Okounkov bodies for cluster varieties.
In particular, Rietsch-Williams associate Newton-Okounkov bodies for a distinguished anti-canonical divisor of the Grassmannian to certain combinatorial objects known as {\it{plabic graphs}}.  See \cite{RW}.
They show that for certain nice plabic graphs, the associated Newton-Okounkov body has a rather simple description-- it is just the convex hull of the valuations of Pl\"ucker coordinates.
However, this simple description fails to describe the Newton-Okounkov body for other plabic graphs.
An immediate consequence of \thref{MainThmInt} is that this issue has an easy fix.
If we just replace the {\it{convex hull of the valuations of Pl\"ucker coordinates}} with what we will call the {\it{broken line convex hull of the valuations of Pl\"ucker coordinates}}, we obtain a simple description of the Newton-Okounkov body that applies to all plabic graphs.
This will be explored further in \cite{Grass}, and we believe broken line convexity provides a natural intrinsic version of Newton-Okounkov bodies for cluster varieties more generally.

\subsection*{Structure of the paper}
In Section~\ref{sec:variety}, we review some basic definitions.  We state precisely our definition of {\it{cluster variety}} and describe the relevant partial compactifications of cluster varieties.
Next, in Section~\ref{sec:basic} we review the notions of scattering diagrams, broken lines, and theta functions.
Original material begins in Section~\ref{sec:blc}, where we describe non-generic broken lines and state carefully what we mean by {\it{broken line convexity}}.

Our construction begins in Section~\ref{sec:key_lemma}. 
Given a pair of broken lines $\lrp{\gamma^{(1)},\gamma^{(2)}}$ balanced at a point $r$ and a pair of positive integers $(a,b)$, we construct a broken line segment $\tilde{\gamma}$ with endpoints $\frac{I(\gamma^{(1)})}{a}$ and $\frac{I(\gamma^{(2)})}{b}$ that passes though $\frac{r}{a+b}$.

In Section~\ref{sec:in_alg}, we perform a reverse construction: given a broken line segment with rational endpoints $\tilde{p}$ and $\tilde{q}$ and a rational point $\tilde{r}$ in the interior of this segment, we construct a pair of broken lines $\lrp{\gamma^{(1)},\gamma^{(2)}}$ balanced at a point $r$, where for some pair of positive integers $(a,b)$ we have  $I(\gamma^{(1)}) = a \tilde{p}$, $I(\gamma^{(2)}) = b \tilde{q}$, and $r= (a+b) \tilde{r}$.

Finally, in Section \ref{sec:MainThm} we give our main theorem-- a set $S$ is positive if and only if it is broken line convex. We finish with an example related to the $\cA$ cluster variety of type $G_2$. 

\subsubsection*{Acknowledgements} 
The authors would like to thank Ben Morley for pointing us in the direction of jagged paths, and Mark Gross, Travis Mandel for helpful discussions.
\red{The authors would also like to thank the anonymous referees for edits and suggestions they have provided.}
M. Cheung is supported by NSF grant DMS-1854512. 
T. Magee \change{was supported by EPSRC grant EP/P021913/1 during the preparation of the first version of this article, and by Royal Society grant RGF\textbackslash EA\textbackslash 181078 while preparing the final version}. A. N\'ajera Ch\'avez is supported by CONACyT grant CB2016 no. 284621. 

\section{Background} 

\subsection{Cluster varieties} \label{sec:variety}
The main result of this paper is a simple combinatorial criterion that decides precisely which closed subsets $S$ of the real tropicalization of a cluster variety $V$ define graded rings with a filtered vector space basis of theta functions.
When $V$ is sufficiently close to affine (various sufficient conditions are given in \cite{GHKK}), 
theta functions form a basis of regular functions on the dual $V^\vee$, and $S$ satisfying our broken line convex criterion give partial compactifications of $V^\vee$, up to a certain ambiguity in codimension 2 loci that we are about to describe.
In the literature there are a few slightly different notions of what {\it{cluster variety}} means.
As hinted in the introduction, all definitions involve in some way a union of algebraic tori glued via birational mutation maps,
and in all cases the resulting scheme is {\it{log Calabi-Yau}}.  See \cite[Definition~1.1]{GHK_birational} for the definition of a log Calabi-Yau scheme.
Gross, Hacking and Keel carefully describe how to construct in this way the pair of schemes they denote by $\cA$ and $\cX$ in \cite[Section~2]{GHK_birational}. 
This pair of schemes is defined in terms of fixed data $ \Gamma$ and an initial seed ${\bf s}$.
We will review the fixed data $\Gamma$ and initial seed $\bf{s}$ in the following subsection~\ref{sec:basic}; 
the reader is encouraged to consult \cite[Section~2]{GHK_birational} for a description of $\cA$ and $\cX$.
Our definition of {\it{cluster variety}} is based upon this and \cite[Lemma~1.4.(ii)]{GHK_birational}, reproduced below.

\begin{lemma}[{\cite[Lemma~1.4.(ii)]{GHK_birational}}]
    Let $\mu: U  \dashrightarrow V$ be a birational map between smooth varieties which is an isomorphism outside codimension two subsets of the domain and range. Then $U$ is log Calabi-Yau if and only if $V$ is.
\end{lemma}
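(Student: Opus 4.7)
My plan is to exploit the fact that the canonical sheaf of a smooth variety is a line bundle, hence reflexive, so both sections and their pointwise inverses extend uniquely across codimension $\geq 2$ subsets. Since the hypothesis is symmetric under $\mu \leftrightarrow \mu^{-1}$, I only need to prove one direction: assume $U$ is log Calabi-Yau and deduce the same for $V$.

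First I would reformulate the log Calabi-Yau condition in terms of volume forms: $U$ log Calabi-Yau is equivalent to the existence of a nowhere-vanishing algebraic top form $\Omega_U$ on $U$ with at worst logarithmic poles along the boundary of some snc compactification $(\overline{U}, D_U)$, and this is in turn equivalent to $K_{\overline{U}} + D_U \sim 0$. Fix closed subsets $Z_U \subset U$, $Z_V \subset V$ of codimension $\geq 2$ such that $\mu$ restricts to an isomorphism $U \setminus Z_U \cong V \setminus Z_V$, and transport $\Omega_U|_{U \setminus Z_U}$ across to a nowhere-vanishing volume form on $V \setminus Z_V$. Since $V$ is smooth, the sheaves $K_V$ and $K_V^{-1}$ are locally free, hence reflexive; so sections on $V \setminus Z_V$ extend uniquely to $V$. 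Applying this both to the transported form and to its pointwise inverse produces a nowhere-vanishing algebraic volume form $\Omega_V$ on all of $V$.

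Next I would produce an snc compactification $(\overline{V}, D_V)$ of $V$ along which $\Omega_V$ has logarithmic poles. Pick any snc compactification $\overline{V}_0 \supset V$ (by Nagata plus Hironaka). Form a common log resolution $W$ of the induced birational map $\overline{U} \dashrightarrow \overline{V}_0$: a smooth projective variety with birational morphisms $p \colon W \to \overline{U}$ and $q \colon W \to \overline{V}_0$ that are simultaneously isomorphisms on an open subset mapping into $U \setminus Z_U = V \setminus Z_V$, with snc complement $D_W$. Pulling back $\Omega_U$ along $p$ gives a nowhere-vanishing form on $W \setminus D_W$ with at worst log poles on $D_W$, so $(W, D_W)$ is log Calabi-Yau. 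Now define $\overline{V}$ by further modifying $\overline{V}_0$ (if necessary) so that $q$ factors through a birational morphism $W \to \overline{V}$ whose contracted divisors all map to codimension $\geq 2$ loci, and set $D_V$ to be the image of $D_W$. A standard log-discrepancy computation then transports $K_W + D_W \sim 0$ to $K_{\overline{V}} + D_V \sim 0$.

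The main obstacle is this last step. The transport of $\Omega_U$ to a nowhere-vanishing form $\Omega_V$ is essentially automatic from Hartogs, but verifying that the resulting form has log poles on an snc compactification of $V$ requires careful bookkeeping of log discrepancies as divisors are pulled back along $p$ and pushed forward along $q$. The crucial input from the hypothesis is that $p$ and $q$ induce the same divisorial valuations on the common function field outside the boundary; this guarantees that $q$-exceptional divisors lie over codimension $\geq 2$ loci in $\overline{V}_0$ and have log-discrepancy $0$ in $(W, D_W)$, so they can be contracted without affecting the log canonical class.
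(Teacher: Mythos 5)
This lemma is quoted in the paper from \cite{GHK_birational} without proof, so there is no in-paper argument to compare against; your proposal is, however, essentially the standard argument behind the cited result (extend the volume form across codimension two by reflexivity of $K_V$ and $K_V^{-1}$, then check the log-pole condition on a common log resolution), and it is correct in substance. Two small inaccuracies are worth flagging. First, under the ``at worst simple poles'' definition of log Calabi--Yau, the condition is not $K_{\overline{U}}+D_U\sim 0$ but rather that $K_{\overline{U}}+D_U$ is linearly equivalent to an effective divisor supported on $D_U$ (equivalently $\operatorname{div}(\Omega_U)\geq -D_U$ with $\operatorname{div}(\Omega_U)|_U=0$); the rest of your argument should be run with this weaker relation, which pushes forward just as well. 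Second, the final step needs no modification of $\overline{V}_0$: since $\overline{V}_0$ is smooth (hence normal), $q\colon W\to\overline{V}_0$ is already a birational morphism whose exceptional divisors automatically lie over codimension $\geq 2$, so one can simply push forward $\operatorname{div}_W(p^*\Omega_U)\geq -D_W$ to get $\operatorname{div}_{\overline{V}_0}(\Omega_V)\geq -D_{V_0}$; the relevant inequality on $W$ is that each component $F$ of $D_W$ satisfies $a(F;\overline{U},D_U)\geq 0$, which holds because reduced snc pairs are log canonical (for $F$ over $D_U$) and because divisors over the codimension $\geq 2$ locus $Z_U\subset U$ have log discrepancy $\geq 2$ -- this is where the codimension hypothesis enters, rather than in the location or discrepancy of the $q$-exceptional divisors.
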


With this in mind we make the following definition.

\begin{definition}\thlabel{cluster}
A smooth scheme $V$ is a {\it{cluster variety of type $\cA$}} if there is a birational map $ \mu: V \dashrightarrow \cA$ which is an isomorphism outside codimension two subsets of the domain and range, 
where $\cA$ is a union of tori glued by $\cA$-mutation as in \cite[Section~2]{GHK_birational}.
We define a {\it{cluster variety of type $\cX$}} analogously. 
\end{definition}

Observe that $\cAp$ is itself constructed like $\cA$, so it is included in the above definition.
\change{Next, Gross-Hacking-Keel also discuss schemes arising as a quotient of $\cA$ by a natural torus action and, dually, schemes arising as fibers in a natural fibration of $\cX$ over a torus.
We will review this construction in subsection~\ref{sec:RedDim}.
Although we mainly focus on cluster varieties, it is worth pointing out that the results of this paper apply to schemes isomorphic up to codimension two to these quotients of $\cA$ and fibers of $\cX$ (see Section \ref{sec:RedDim} below) as well.}

Finally, we indicate the sorts of compactifications we are interested in, again following \cite{GHK_birational}.

\begin{definition}[{\cite[Page~142]{GHK_birational}}]\thlabel{pmm}
A partial compactification $V\subset Y$ is a {\it{partial minimal model}} for the cluster variety $V$ if the volume form $\Omega$ on $V$ has a pole on every irreducible divisorial component of the boundary $Y\setminus V$.  
\end{definition}

We include this information only to orient the reader.
The main result \thref{MainThm} deals with subsets of a tropical space, not explicitly with compactifications of cluster varieties.
Its interest stems from these compactifications though, and we include \thref{cluster} and \thref{pmm} to clarify the context of this result.
As the main result is logically independent from these definitions, we will not dwell on this further.
We invite the interested reader to see \cite[Section~8.5]{GHKK} for a complete description of the construction of partial minimal models for cluster varieties from positive sets.

\subsection{Scattering diagrams and theta functions} \label{sec:basic}
In this section we review the formulation of cluster scattering diagrams, following \cite{GHKK}.
This section is included to make the paper reasonably self-contained, but greater detail is provided in the original reference and the interested reader is encouraged to read \cite[Section~1]{GHKK} for a more complete treatment.
After this review, we give a slightly modified definition of broken lines. 
Finally, we discuss theta functions and their multiplication, and we motivate constructions which will appear in the coming sections.

Let us begin by defining the \emph{fixed data} $\Gamma$ for a pair of cluster varieties $\lrp{\cA,\cX}$, which consists of the following:
\begin{itemize}
\setlength\itemsep{0em}
    \item a finite set $I$ of \emph{directions} with a subset of \emph{unfrozen directions} $\Iuf$; 
    \item a lattice $N$ of rank $ |I|$;
    \item a saturated sublattice $ \Nuf \subseteq N$ of rank $|I_{\text{uf}}|$;
    \item a skew-symmetric bilinear form $\lbrace \cdot , \cdot \rbrace: N\times N \to \Q$;
    \item a sublattice $N^{\circ}\subseteq N$ of finite index satisfying
    $\lbrace N_{\text{uf}}, N^{\circ}\rbrace \subset \Z  $ and $  \lbrace N,N_{\text{uf}}\cap N^{\circ}\rbrace \subset \Z$;
    \item a tuple of positive integers $(d_i:i\in I)$ with greatest common divisor 1;
    \item $M=\Hom(N,\Z)$ and $M^{\circ}=\Hom(N^{\circ},\Z)$.
\end{itemize}

A \emph{seed} is a tuple $\seed =(e_i\in N :i\in I)$ such that $\{e_i:i\in I\}$ is a basis of $N$, 
$\{ e_i : i\in \Iuf\} $ is a basis of $\Nuf$, 
and $\{d_i e_i : i\in I\}$ is a basis $N^{\circ}$.
We denote the dual basis by $\{e_1^*, \dots , e_n^*\}$, and we set $f_i = d_i^{-1}e_i^*$. 

The skew form induces the following map of lattices:
\eqn{p_1^*:\Nuf&\to M^\circ\\
        n&\mapsto \sk{n}{\ \cdot\ } .}
\red{Let $\sigma\subset M^\circ_\R$ be a full dimensional strictly convex rational polyhedral cone containing $\lrc{p_1^*(e_i): i\in \Iuf}$. 
Denote the monoid of integral points of $\sigma$ by $P$,} and its maximal ideal of non-invertible elements by $J$.
In order to define the scattering diagram, Gross-Hacking-Keel-Kontsevich require $p_1^*$ to be injective.\cite[Section~1.1:~``Injectivity~assumption'']{GHKK}
This assumption allows them to build the scattering diagram through an iterative procedure and provides a notion of convergence.
If $p_1^*$ is injective, then $J = P \setminus \lrc{0}$, but if $p_1^*$ fails to be injective $P$ may have non-zero invertible elements. 
Scattering functions for cluster varieties are elements of the completion $\widehat{\C\lrb{P}}$ of $\C\lrb{P}$ with respect to $J$.
If $P$ has some non-zero invertible element $m$, then a series $1 + \sum_{k=1}^\infty c_k z^{k m}$ is not in $\widehat{\C\lrb{P}}$, and we have no way to describe convergence for this series.
If a scattering function were of that form, it would not be congruent to $1 \mod J$, and the iterative procedure of building a consistent scattering diagram-- where intermediate scattering diagrams are consistent modulo powers of $J$-- would break down.

When the injectivity assumption fails, the situation can be fixed by introducing {\it{principal coefficients}}.
This replaces $\cA$ with a higher dimensional space $\cAp$ whose unfrozen directions are naturally identified with $\Iuf$.
By construction, the injectivity assumption is satisfied for $\cAp$, so it is always possible to build a consistent scattering diagram for this space.
Moreover, $\cAp$ is itself a $(\C^*)^n$-family of deformations of $\cA$, with $\cA$ the fiber over 1, while $\cX$ is a quotient of $\cAp$ by a torus action.
So, we can use $\cAp$ to study both $\cA$ and $\cX$; failure of the injectivity assumption really does not present an insurmountable problem.  This is illustrated emphatically in \cite{GHKK}, where some of the main results in cluster theory are established using scattering diagrams for $\cAp$ and in turn drawing conclusions about $\cA$ and $\cX$. 
Given fixed data $\Gamma$ for the pair $\lrp{\cA,\cX}$, 
the fixed data $\Gamma_{\prin}$ encoding $\cAp$ is given by

\begin{itemize}
\setlength\itemsep{0em}
    \item $\wN:= N \oplus M^{\circ}$ with the skew-symmetric bilinear form 
    \[
    \{ (n_1, m_1) , (n_2, m_2) \}  = \{n_1, n_2 \} + \langle n_1, m_2 \rangle - \langle n_2 , m_1 \rangle. 
    \]
    \item $\widetilde{\Nuf}:= \Nuf \oplus 0$; $\wN^{\circ} := N^{\circ} \oplus M$. 
    \item The new index set $\tilde{I}$ is the disjoint union of two copies of $I$. The set of unfrozen indices $\Iuft$ is the original $\Iuf$ viewed as a subset of the first copy of $I$.
\end{itemize}
Given an initial seed $\seed =(e_i\in N :i\in I)$, the corresponding seed for $\cAp$ is \eqn{\widetilde{\seed} =\left( (e_1, 0), \dots, (e_n, 0) , (0,f_1) , \dots, (0, f_n) \right).} 

Fix the seed data $\textbf{s} := ( e_i | i \in I)$ and define 
\[    \wN^{+} := \wN^+_{\textbf{s}} := \left\{ \sum_{i \in\Iuf } a_i (e_i,0) \ \bigg| \  a_i \geq 0, \sum a_i >0 \right\}. \]
Now let $\wpp_1^*: \widetilde{\Nuf} \rightarrow \wM^{\circ}$ be the map  given by $\wpp_1^*(n ) = \{n,\ \cdot\ \}$.
Observe that $\wpp_1^*$ is automatically injective since $\sk{(e_i,0)}{(0,f_j)}= \lra{e_i, f_j}=  \delta_{i,j}$.
Let $\sigma$ be the cone generated by $p_1^*\left( (e_i, 0) \right)$ in $\wM^\circ_{\R}$. 
Let $\widetilde{P} = \sigma \cap  \wM^{\circ}$ be the associated monoid, and $\widetilde{J} := \widetilde{P} \setminus \{ 0\}$. 
Abusing notation slightly, we also write $\widetilde{J}$ for the associated monomial ideal in the monoid ring $\C[\widetilde{P}]$, 
and we denote the completion of $\C[\widetilde{P}]$ with respect to $\widetilde{J}$ by $\widehat{\C[\widetilde{P}]}$.

\begin{definition} \label{def:wall+scat}
	A {\it{wall}} in $\wM_{\R}^\circ$ is a pair $(\wall, f_{\wall})$ where
\begin{itemize}
\setlength\itemsep{0em}
\item $\wall \subseteq \wM_{\R}^\circ$, the {\it{support}} of the wall, is a convex rational polyhedral cone of codimension one, contained in $n^{\perp}$ for some $n \in \wN^+$, and 
\item $f_{\wall} \in \widehat{\C[\widetilde{P}]}$, the {\it{scattering function}}, is of the form $ f_{\wall} = 1+ \sum_{k \geq 1} c_k z^{k\wpp_1^*(n)}$, where $c_k \in \C$. 
\end{itemize}
A wall $(\wall, f_{\wall})$ is called \emph{incoming} if $\wpp_1^* (n) \in \wall$. Otherwise it is called \emph{outgoing}.
We call $-p_1^*(n)$ the \emph{direction} of the wall. 
A \emph{scattering diagram} $\scat$ is a collection of walls such that, for each $k \geq  0$, the set
$
	{\{ (\wall, f_{\wall}) \in \scat\, |\, f_{\wall} \neq 1 \bmod (z^{\wpp_1^* (e_i)})^k \}}
$
 is finite. The \emph{support} of a scattering diagram, $\Supp(\scat)$, is the union of the supports of its walls. 
We write
	\eqn{\Sing (\scat) = {\bigcup_{\wall \in \scat} \partial \wall} \quad \cup {\bigcup_{\substack{\wall_1, \wall_2 \in \scat \\  \dim \wall_1 \cap \wall_2 = n-2}} \wall_1 \cap \wall_2},} 
for the \emph{singular locus} of the scattering diagram.
\end{definition}

Now consider a smooth immersion $\gamma: [0,1] \rightarrow \widetilde{M}_{\R}^\circ \setminus \Sing(\scat)$, with endpoints not contained in the support of $\scat$, that only crosses walls transversely.
For each $k>0$, $\gamma$ will cross only a finite number $s_k$ of walls, and we label them by $\lrc{\left.\lrp{\wall_i,f_{\wall_i} }\right| 1\leq i \leq s_k }$, where $\wall_i \subseteq n_0^\perp$ for some $n_0 \in \widetilde{N}^+ $ and $i<j$ if $\gamma$ crosses $\wall_i$ before $\wall_j$.
Each wall $\wall_i$ determines an automorphism $\frakp_{f_{\wall_i}}$ of $\widehat{\C[\widetilde{P}]}$ by
\eqn{\frakp_{f_{\wall_i}}\lrp{z^m}=  z^m f_{\wall_i}^{\epsilon \langle n_0', m \rangle}}
where $n_0'$ is the generator of the monoid $\R_{\geq 0}\cdot n_0 \cap \widetilde{N}^{\circ}$ and $\epsilon = - \sgn(\langle n_0, \gamma' (t_i) \rangle)$.\footnote{Note that with this choice of $\epsilon$, there is never any need to divide.  In the coming sections, we will often incorporate $\epsilon$ into $n_0$ by not requiring $n_0$ to be in $N^+$.}
By composing these automorphisms, we can define for each $k>0$ the {\it{path ordered product}} $\frakp^k_{\gamma, \scat} = \frakp_{f_{\wall_{s_k}}} \circ \cdots \circ \frakp_{f_{\wall_{1}}}$.
The astute reader may notice a potential pitfall in this definition: we have not said how to order walls that are crossed at the same time-- walls whose supports $\wall_i$ and $\wall_j$ are contained in the same hyperplane $n^\perp$.
However, in this case the associated automorphisms $\frakp_{f_{\wall_i}}$ and $\frakp_{f_{\wall_j}}$ of $\widehat{\C[\widetilde{P}]}$ commute, so there is no issue. See \cite[Section~1.1]{GHKK} for details. We then take
$	\frakp_{\gamma, \scat} = \lim_{k \rightarrow \infty} \frakp ^k_{\gamma, \scat}$.

Now observe that any homotopy of $\gamma$ that leaves its endpoints and intersection points with walls fixed will not affect $\frakp_{\gamma, \scat}$.
Note also that this definition can easily be extended to piecewise smooth paths $\gamma$, provided that the path always crosses a wall if it intersects it.
Two scattering diagram $\scat$ and $\scat'$ are said to be \emph{equivalent} if $\frakp_{\gamma, \scat} = \frakp_{\gamma, \scat'}$ for all paths $\gamma$ for which both are defined.
A scattering diagram is \emph{consistent} if $\frakp_{\gamma, \scat}$ only depends on the endpoints of $\gamma$ for any path $\gamma$ for which $\frakp_{\gamma, \scat}$ is defined.

To define a cluster scattering diagram, Gross-Hacking-Keel-Kontsevich first take
\[
\scat_{in, \seed}^{\cAp} := \lrc{\left.\left( (e_i, 0)^{\perp} , 1+z^{ \wpp_1^*\left( (e_i,0) \right)}\right) \right| \  i \in \Iuf }. 
\]
Observe that all scattering functions in $\scat_{in, \seed}^{\cAp}$ are congruent to $1 \mod \widetilde{J}$, so trivially $\scat_{in, \seed}^{\cAp}$ is consistent modulo $\widetilde{J}$. Gross-Hacking-Keel-Kontsevich then introduce new walls in a systematic iterative process (see \cite[Appendix~C]{GHKK}).
All walls introduced are outgoing, and the $k^{\text{th}}$ scattering diagram $\scat_k$ of the sequence is consistent modulo $\widetilde{J}^k$.
By taking the $k\to\infty$ limit, they obtain a consistent scattering diagram $\scat_{\seed}^{\cAp}$ that contains $\scat_{in, \seed}^{\cAp} $:
\begin{proposition} \cite[Theorem 1.12, Theorem 1.13]{GHKK}
There is a consistent scattering diagram $\scat_{\seed}^{\cAp} $ such that $\scat_{in, \seed}^{\cAp} \subset \scat_{\seed}^{\cAp}$, and $\scat_{\seed}^{\cAp} \setminus \scat_{in, \seed}^{\cAp} $ consists only of outgoing walls. 
Furthermore $\scat_{\seed}^{\cAp}$ is equivalent to a scattering diagram all of whose scattering functions are of the form ${ f_{\wall} = (1+ z^{\wpp_1^*(n)})^c}$, for some $n \in \wN^+$, and $c$ a positive integer. 
\red{The consistent scattering diagram is unique up to equivalence.}
\end{proposition}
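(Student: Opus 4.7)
The plan is to produce $\scat_{\seed}^{\cAp}$ as the limit of a sequence of scattering diagrams $\scat_k$, each consistent modulo $\widetilde{J}^{k+1}$, obtained inductively from $\scat_{in,\seed}^{\cAp}$ by adjoining outgoing walls supported on codimension-one rational polyhedral cones. The base case is $\scat_0 = \scat_{in,\seed}^{\cAp}$, which is trivially consistent modulo $\widetilde{J}$ since all its scattering functions are congruent to $1$ modulo $\widetilde{J}$. Assuming $\scat_k$ has been built, I would measure the failure of consistency modulo $\widetilde{J}^{k+2}$ by looking at small loops around the codimension-two strata of $\Sing(\scat_k)$. Near each stratum, the injectivity of $\wpp_1^*$ lets us write the obstruction as a Lie-theoretic expression in $\widehat{\C[\widetilde{P}]}/\widetilde{J}^{k+2}$ whose possible new terms $z^m$ each determine a unique outgoing ray (the half-line $\R_{\geq 0}\wpp_1^*(n) + \text{(stratum)}$ with $n\in\wN^+$ and $\wpp_1^*(n) \equiv m$ in the quotient by the stratum's span). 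One then invokes the Kontsevich–Soibelman lemma, in the form used in \cite[Appendix~C]{GHKK}, to find scattering functions along these rays whose associated automorphisms absorb the obstruction. This gives $\scat_{k+1}\supset\scat_k$ consistent modulo $\widetilde{J}^{k+2}$, with all new walls outgoing.

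The finiteness condition on scattering diagrams guarantees that the direct limit $\scat_{\seed}^{\cAp} = \bigcup_k \scat_k$ is a well-defined scattering diagram, and by construction it is consistent (since any loop lies in some compact set that only meets finitely many walls mod $\widetilde{J}^k$ for each $k$). Containment $\scat_{in,\seed}^{\cAp}\subset \scat_{\seed}^{\cAp}$ holds because we never modify the incoming walls, and all added walls are outgoing.

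For uniqueness up to equivalence, the plan is to argue that any consistent $\scat'$ containing $\scat_{in,\seed}^{\cAp}$ and with $\scat'\setminus \scat_{in,\seed}^{\cAp}$ outgoing must coincide with $\scat_{\seed}^{\cAp}$ modulo $\widetilde{J}^{k+1}$ for every $k$, by induction: outgoing walls are determined by the obstructions coming from $\scat_{in,\seed}^{\cAp}$ modulo increasing powers of $\widetilde{J}$, and the argument in the previous paragraph shows the needed scattering functions are unique modulo $\widetilde{J}^{k+2}$ once the lower-order data are fixed. Hence $\frakp_{\gamma,\scat'} = \frakp_{\gamma,\scat_{\seed}^{\cAp}}$ on any path, giving equivalence.

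For the refined form $f_{\wall}=(1+z^{\wpp_1^*(n)})^c$, the strategy is to use that each wall lies in $n^\perp$ for a single $n\in\wN^+$, and the associated automorphism is a time-one flow along a Hamiltonian (in $\widehat{\C[\widetilde{P}]}$) whose monomials all lie on the ray $\R_{\geq 0}\wpp_1^*(n)$ because of the skew-symmetry of $\{\cdot,\cdot\}$ and the shape of the Kontsevich–Soibelman lemma; the logarithm of a series of the form $1+\sum_{k\geq 1} c_k z^{k\wpp_1^*(n)}$ can be replaced by $c\log(1+z^{\wpp_1^*(n')})$ for some primitive $n'$ and positive integer $c$ after a change of variables that redistributes multiplicities among parallel walls. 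The main obstacle I anticipate is the inductive step: controlling the obstruction near a codimension-two stratum and showing the corrections can always be placed on outgoing rays (never on incoming ones) — this is where the geometry of $\scat_{in,\seed}^{\cAp}$ and the positivity built into $\wN^+$ really enter, and it is the technical heart of the Kontsevich–Soibelman machinery as adapted to the cluster setting in \cite[Appendix~C]{GHKK}.
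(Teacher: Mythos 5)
This proposition is quoted from \cite{GHKK} (Theorems 1.12 and 1.13); the paper does not prove it but points to \cite[Section~1.2 and Appendix~C]{GHKK}, describing exactly the iterative process you outline. Your existence argument (start from $\scat_{in,\seed}^{\cAp}$, measure the failure of consistency around codimension-two joints modulo increasing powers of $\widetilde{J}$, and absorb it by adjoining outgoing walls via the Kontsevich--Soibelman lemma) and your uniqueness argument (induction on the power of $\widetilde{J}$, since the order-$k$ correction is forced once the lower-order data are fixed) are the same route as the cited construction, so for those two clauses your sketch is a faithful outline of the standard proof.

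There is, however, a genuine gap in your final paragraph. The claim that every scattering function can be brought to the form $(1+z^{\wpp_1^*(n)})^c$ with $c$ a \emph{positive integer} does not follow from taking logarithms and ``redistributing multiplicities among parallel walls.'' What that manipulation gives is only the factorization $f_{\wall}=\prod_{k\geq 1}(1+z^{k\,\wpp_1^*(n)})^{d_k}$ with $d_k\in\Q$ determined by $\log f_{\wall}$; nothing in the Kontsevich--Soibelman formalism forces the $d_k$ to be nonnegative integers, and for a generic series they are not. The positivity and integrality of these exponents is precisely the content of the positivity theorem in \cite[Appendix~C.3]{GHKK}, which is proved by a change-of-lattice/perturbation trick reducing the computation at each joint to explicit rank-two scattering diagrams where positivity is known (Reineke, Gross--Pandharipande). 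Without importing that result, the last clause of the proposition remains unproven in your sketch.
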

The details of this construction can be found in \cite[Section~1.2 and Appendix~C]{GHKK}.

\begin{definition} \thlabel{def:genbroken}

{\sloppy{Let $\scat$ be a scattering diagram, $\mono \in \wM^{\circ} \setminus \{0\}$ and $x_0 \in \wM^\circ_{\R} \setminus \Supp(\scat)$. 
A \emph{generic broken line} with \emph{initial exponent} $\mono$ and \emph{endpoint} $x_0$ is a piecewise linear continuous proper path $\gamma : ( - \infty , 0 ] \rightarrow \wM^\circ_{\mathbb{R}} \setminus \Sing (\scat)$ bending only at walls, with a finite number of domains of linearity $L$ and a monomial $c_L z^{\mono_L} \in \C[\wM^\circ]$ for each of these domains. The path $\gamma$ and the monomials $c_L z^{\mono_L}$ are required to satisfy the following conditions:
\begin{itemize}
\setlength\itemsep{0em}
    \item $\gamma(0) = x_0$.
    \item If $L$ is the unique unbounded domain of linearity of $\gamma$, then $c_L z^{\mono_L} = z^{\mono}$.
    \item For $t$ in a domain of linearity $L$, $\gamma'(t) = -\mono_L$.
    \item Suppose $\gamma$ bends at a time $t$, passing from the domain of linearity $L$ to $ L'$, and set ${\scat_t = \lrc{\left.(\wall, f_{\wall}) \in \scat \right| \gamma (t) \in \scat }}$. Then $c_{L'}z^{\mono_{L'}}$ is a term in $\frakp_{{\gamma}|_{(t-\epsilon,t+\epsilon)},\scat_t} (c_L z^{m_L}) $.
\end{itemize}
Further, we refer to $\mono_L$ as the \emph{slope} or \emph{exponent vector} of $\gamma$ at $L $ and set 
\begin{itemize}
    \item $I(\gamma) = \mono$;
    \item $\text{Mono} (\gamma) = c(\gamma)z^{F(\gamma)}$
to be the monomial $c_L z^{\mono_L}$ attached to the unique domain of linearity $L$ of $\gamma$ having $x_0 $ as an endpoint. 
\end{itemize}}}
\end{definition}

\red{The reader can refer to Figure~\ref{fig:numbering} for a schematic of a broken line and to Figure~\ref{fig:gamma} for a precise example in type $A_2$.}

\begin{remark}
\thlabel{rem:motivation}
We are deviating slightly from the terminology of \cite{GHKK} here.
What we call a {\it{generic broken line}} is simply called a {\it{broken line}} in \cite{GHKK}.
The reason for the different terminology is that we will need to allow endpoints $Q$ that lie in $\Supp\lrp{\scat}$ and paths $\gamma$ that pass through $\Sing\lrp{\scat}$.
We will use these non-generic broken lines in Section~\ref{sec:MainThm} when $\alpha(p,q,r) \neq 0 $, $r$ lies in a wall, and we wish to construct a broken line segment passing through a given multiple of $r$ and having endpoints given multiples of $p$ and $q$.
To indicate the need for such non-generic broken lines, it is easy to see that a finite set of distinct points in an outgoing wall is not positive.
However, since there is no generic broken line between them, \thref{def:genbroken} would vacuously be satisfied for this set if we were to restrict to generic broken lines.   
We give our definition of {\it{non-generic broken lines}} in \thref{def:broken}.
\end{remark}

\begin{remark}
Even though a broken line consists of the piecewise linear ray $\gamma$ together with the collection of monomials $c_L z^{\mono_L}$ attached to the domains of linearity, we systematically abuse language and refer to a broken line simply by $\gamma$. 
\end{remark}

\begin{remark} \label{rk:incoming}
Now that we have the notion of a broken line, for {\red{dimension 2}} scattering diagrams,
the terms \emph{incoming wall} and \emph{outgoing wall} in Definition~\ref{def:wall+scat} do carry a literal interpretation. 
\change{Walls in dimension 2 scattering diagrams are lines passing through the origin or rays emanating from the origin. 
In this remark, we decompose a line as two rays emanating from the origin.
The scattering function $f_{\wall}$ of the wall $\wall$ would be of the form $1+ \sum_{k \geq 1} c_k z^{kv}$ from some $c_k \in \C$ and $v$ such that $\wall \subset \R v$. 
A wall is called incoming if $v \in \wall$. Otherwise it is called outgoing.}

In the definition of broken lines, the direction of travel in a domain of linearity is the opposite of the direction of exponent vector in that domain.
\change{Having decomposed lines into pairs of rays, }if the wall $\wall$ contains the point $v$, the broken lines bending over $\wall$ will bend towards the origin. This justifies the term incoming. 
Similarly, broken lines bending at outgoing walls must bend away from the origin.\footnote{\change{  If we had not decomposed lines into rays, then bends at incoming walls could be toward or away from the origin, but bends at outgoing walls would always be away from the origin.}}
\end{remark}

Broken lines are used to define {\it{theta functions}}.
\begin{definition}\thlabel{def:theta}
Let $\scat$, $\mono$, and $x_0$ be as in Definition~\ref{def:genbroken}. 
We define
\[ \tf_{x_0, \mono} = \sum_{\gamma} \text{Mono} (\gamma), \]
where the sum is over all broken lines $\gamma$ with $I(\gamma)=\mono$ and $\gamma(0)=x_0$.
For $\mono = 0$, for any endpoint $x_0$ we define  $\vartheta_{x_0, 0} =1$.
\end{definition}
The point $x_0$ in \thref{def:theta} picks out a coordinate system, and we would like to interpret $\tf_{x_0,\mono}$ as the expression for a canonical function $\tf_\mono$ in this coordinate system.
For this to make sense, if $y_0$ is another point in $\widetilde{M}^\circ_\R\setminus \Supp(\scat)$ and $\eta$ is a path connecting $x_0$ and $y_0$ and defining a wall-crossing automorphism $\frakp_{\eta,\scat}$,
we need to have $\tf_{y_0,\mono} = \frakp_{\eta,\scat}\lrp{ \tf_{x_0,\mono} }$.
But this is \cite[Theorem~3.5]{GHKK}.
From now on, when we write $\tf_\mono$ (with no indication of endpoint $x_0$), we will mean the canonical function that can be expressed in different coordinate systems by choosing an endpoint $x_0$ and following \thref{def:theta}.

\subsubsection{The \texorpdfstring{$\cX$}{X} case}\label{sec:Xscat}
To describe the scattering diagram for $\cX$, Gross-Hacking-Keel-Kontsevich view $\cX$ as a quotient of $\cAp$.
A choice\footnote{See \cite[Section~2]{GHK_birational} for a discussion of $p^*$ maps.  We simply comment here that $\left.p^*\right|_{\Nuf} = p_1^*$.} of $p^*:N \to M^\circ$ 
\change{induces a short exact sequence}
\[
\begin{tikzcd}
0 \arrow[r] & N \arrow[r,"\widetilde{p}^*"] & \widetilde{M}^\circ= M^{\circ}\oplus N \arrow[r,"\psi^*"] & M^\circ \arrow[r] & 0  
\end{tikzcd}
\]
\change{where $\widetilde{p}^*$ is the map $n\mapsto (p^*(n),n)$ and $\psi^*$ is the map  $(m,n)\mapsto m- p^*(n)$.
This yields an exact sequence of tori:
}
\[
\begin{tikzcd}
1 \arrow[r] & T_{N^\circ} = \Spec\lrp{\C[M^\circ]} \arrow[r,"\psi"] & T_{\widetilde{N}^\circ}=\Spec\lrp{\C[\widetilde{M}^\circ]} \arrow[r,"\widetilde{p}"] & T_M= \Spec\lrp{\C[N]} \arrow[r] & 1  
\end{tikzcd}
\]
\change{Since $p^*$ commutes with mutation, this gives a $T_{N^\circ}$-action on $\cAp = \bigcup_\seed {T_{\widetilde{N}^\circ,\seed}}/\sim$ and realizes $\cX= \bigcup_\seed {T_{M,\seed}}/\sim$ as the quotient of $\cAp$ by this $T_{N^\circ}$-action. }

Functions on $\cX$ are $T_{N^\circ}$-weight $0$ functions on $\cAp$.
Restricting to tori $T_{\widetilde{N}^\circ}$ in $\cAp$,
a character $z^{\lrp{m,n}}$ has weight $m-p^*(n)$ under the $T_{N^\circ}$-action.
As the scattering functions are all series in $z^{\wpp_1^*(n)}=z^{\lrp{p^*(n),n}}$ for $n\in \Nuft$,
wall crossing does not affect $T_{N^\circ}$-weight.
A broken line whose initial decoration monomial has weight $0$ and whose endpoint lies in the weight $0$ slice 
\eqn{ \lrc{ \left.(m,n) \in \widetilde{M}^\circ_\R \right| m=p^*(n)} \subset \lrp{\cAp^\vee}^\trop(\R)\cong \widetilde{M}^\circ_\R } 
must be contained entirely in the weight $0$ slice.
The scattering diagram for $\cX$ (denoted $\scat^\cX_\seed$) is just the weight $0$ slice of the scattering diagram $\scat^{\cAp}_\seed$.

\subsubsection{The \texorpdfstring{$\cA$}{A} case}\label{sec:Ascat}
The situation for $\cA$, or its deformations $\cA_{\vb{t}}$, is a bit more subtle.
Rather than being quotients of $\cAp$ under a $T_{N^\circ}$ action,
these spaces are fibers of the natural projection ${\cAp \to T_M}$.
In this case, on the level of the tropical space of the mirror, we have a natural quotient 
\eqn{\lrp{\cAp^\vee}^\trop(\R) \to \lrp{\cA^\vee}^\trop(\R).}
The image inherits a wall structure from $\scat^{\cAp}_{\seed}$,
with attached functions simply determined by specializing coefficients to $\vb{t}$ (for $\cA= \cA_1$, $\vb{t}= 1$) in the $\scat^{\cAp}_\seed$ scattering functions.
However, the result is not technically a scattering diagram as these attached functions may not lie in $\widehat{\C[P]}$.
Nevertheless, this structure is entirely sufficient for our needs.
Gross-Hacking-Keel-Kontsevich define broken lines in $\lrp{\cA^\vee}^\trop(\R)$ to be images of broken lines in $\lrp{\cAp^\vee}^\trop(\R)$ under the natural projection,
with decoration monomials for broken lines in $\lrp{\cA^\vee}^\trop(\R)$ obtained by applying the derivative of the projection map to decoration monomials for broken lines in $\lrp{\cAp^\vee}^\trop(\R)$.
We will be able to directly recover the equivalence of positivity and broken line convexity for subsets of $\lrp{\cA^\vee}^\trop(\R)$ from the analogous statement for $\cAp$.
For more details on the construction of broken lines for $\cA$ and $\cX$ via broken lines for $\cAp$, please see \cite[Construction~7.11]{GHKK}.

\change{
\subsubsection{Quotients of \texorpdfstring{$\cA$}{A} and fibers of \texorpdfstring{$\cX$}{X}} \label{sec:RedDim}
Cluster $\cA$-varieties are often naturally equipped with a torus action, while $\cX$ varieties often naturally fiber over a torus.
This occurs when $p^*:N \to M^\circ$ is not injective.
Let $K= \ker(p^*)$ and $K^\circ = K\cap N^\circ$.
Now let $H$ be a saturated sublattice of $K$ and $H^\circ = H\cap N^\circ$.
The inclusion $H^\circ \hookrightarrow N^\circ$ obviously induces an action of $T_{H^\circ}$ on $T_{N^\circ}$.
This extends to an action of $T_{H^\circ}$ on $\cA = {\bigcup_\seed T_{N^\circ,\seed}}/ \sim$ as $H^\circ \subset \ker(p^*)$ and $p^*$ commutes with mutation.
Meanwhile, the inclusion $H\hookrightarrow N$ gives a map $$ T_{M}= \Spec(\C[N]) \longrightarrow T_{M/H^\perp}= \Spec(\C[H]).$$
Since $H\subset \ker(p^*)$, $p^*$ commutes with mutation, and $\cX = \bigcup_\seed T_{M,\seed} / \sim$, this extends to a map $ \cX  \longrightarrow T_{M/H^\perp}$.
Up to the subtlety about scattering functions for the $\cA$ ``scattering diagram'' discussed in Section~\ref{sec:Ascat}, we can obtain scattering diagrams and broken lines for $\cA/T_{H^\circ}$ and for the fibers $\cX_\phi$ of $ \cX \longrightarrow T_{M/H^\perp}$ in precisely the same way scattering diagrams and broken lines for $\cX$ and $\cA$ are obtained from those of $\cAp$.

{\bf{The $\cA/T_{H^\circ}$ case:}}
Let $\scat^{\cA_{\vb{t}}}_\seed$ be the ``scattering diagram'' for $\cA_{\vb{t}}$ described above.
Functions on $\cA_{\vb{t}}/T_{H^\circ}$ are $T_{H^\circ}$-weight $0$ functions on $\cA_{\vb{t}}$,
so $\big((\cA_{\vb{t}}/T_{H^\circ})^\vee\big)^\trop(\R)$ is the $T_{H^\circ}$-weight $0$ slice of $(\cA^\vee_{\vb{t}})^\trop(\R)$ (the ambient space of $\scat^{\cA_{\vb{t}}}_\seed$) to obtain $\scat^{\cA_{\vb{t}}/T_{H^\circ}}_\seed$. 
To see that this makes sense, we simply note that the scattering functions are themselves $T_{H^\circ}$-weight 0 functions.
Recall that we may take all scattering functions of $\scat^{\cAp}_\seed$ to be of the form $f_\wall = \lrp{1+ z^{\widetilde{p}_1^*(n)}}^c$ for some $n\in \widetilde{N}^+ \subset \widetilde{N}_{\mathrm{uf}}$ and $c \in \mathbb{N}$.
For $h \in H^\circ$ and $n \in \Nuf$, we have \eqn{\lra{(h,0),\widetilde{p}_1^*((n,0))}_{\widetilde{N} \times \widetilde{M} } &= \lra{h,p_1^*(n)}_{N \times M }\\
&=-\lra{n,p^*(h)}_{N \times M }\\
&=0.} 
As a result, a broken line whose initial decoration monomial has $T_{H^\circ}$-weight $0$ and whose endpoint lies in the $T_{H^\circ}$-weight $0$ slice of $(\cA^\vee_{\vb{t}})^\trop(\R)$
must be contained entirely in the $T_{H^\circ}$-weight $0$ slice.

{\bf{The $\cX_\phi$ case:}}
The ambient space of the $\cX$ scattering diagram is $(\cX^\vee)^\trop(\R)$. 
(Recall that $\cX^\vee$ is the $\cA$ variety associated to the Langlands dual data.)
We freely use \cite[Proposition~A.5]{GHKK} to identify $(N^\vee)^\circ$ with $N$ and $(H^\vee)^\circ$ with $H$.
Then precisely as discussed above, $\cX^\vee$ comes with a $T_H$ action.
The action of $T_H$ on $\cX^\vee$ tropicalizes to an action of $H= T_H^\trop(\Z)$ on $(\cX^\vee)^\trop(\Z)$, extending to an action of $H\otimes \R = T_H^\trop(\R)$ on $(\cX^\vee)^\trop(\R)$.
The scattering diagram $\scat^{\cX}_{\seed}$ may be chosen such that the support of each wall is $H$-invariant.\footnote{This may be shown by induction using the algorithmic construction of cluster scattering diagrams in \cite[Appendix~C]{GHKK}.}
Then the quotient $(\cX_\phi^\vee)^\trop(\R) = (\cX^\vee)^\trop(\R)/ H\otimes \R$ inherits a wall structure $ \scat^{\cX_\phi}_\seed $ from $ \scat^{\cX}_\seed $, with attached scattering functions obtained by specializing $\C[H]$-coefficients to $\phi$ in the $\scat^\cX_\seed$ scattering functions. 
Broken lines in $(\cX_\phi^\vee)^\trop(\R)$ are defined to be images of broken lines in $(\cX^\vee)^\trop(\R)$ under the projection, with the decoration monomials obtained by applying the derivative of the projection map to the decoration monomials of the broken lines in $(\cX^\vee)^\trop(\R)$.

The case in which $H$ has corank 2 in $N$ is of particular interest.
As remarked in the Acknowledgements of \cite{GHK_birational} and explained in Section~5 of the same paper, in this case the two dimensional schemes we have just reviewed are (up to the codimension two ambiguity addressed in \cite[Lemma~1.4.(ii)]{GHK_birational}) exactly the log Calabi-Yau surfaces of \cite{ghk}.
}

\subsubsection{Scattering diagram examples}
We conclude this subsection with additional examples of the scattering diagrams. 
\noindent
\begin{center}
\begin{minipage}{\linewidth}
\captionsetup{type=figure, width=.85\linewidth}
\begin{center}
\begin{tikzpicture}[scale=.85]
    \def\x{3}
    \def\d{2.5}
    \def\l{4.5}
    \def\op{0}
    
\begin{scope}
    
    \clip (-\d,-\l) rectangle (\l,\d);

    \coordinate (0) at (0,0); 

    \draw[->,thick] (-\d,0)--(\l,0) node [pos = .15, sloped, above] {$1+z^{(-2,0)}$};     
    \draw[->,thick] (0,\d)--(0,-\l) node [pos = .03, right] {$1+z^{(0,2)}$};     

    \path[name path=r1] (0)--(2*\x,-\x) node [pos = .56, sloped, above] {$1+z^{(-4,2)}$}; 
    \path[name path=r2] (0)--(3*\x,-2*\x); 
    \path[name path=r3] (0)--(4*\x,-3*\x); 

    \path[name path=r4] (0)--(2*\x,-2*\x); 

    \path[name path=r5] (0)--(3*\x,-4*\x); 
    \path[name path=r6] (0)--(2*\x,-3*\x); 
    \path[name path=r7] (0)--(\x,-2*\x) node [pos = .56, sloped, below] {$1+z^{(-2,4)}$}; 

    \draw (3.6,-3.1) node {$\vdots$};
    \draw (3.15,-3.6) node {$\cdots$};

    \path[name path global=bbox] (current bounding box.south west) rectangle 
    (current bounding box.north east);
    \path [name intersections={of=bbox and r1,by=v1}];
    \path [name intersections={of=bbox and r2,by=v2}];
    \path [name intersections={of=bbox and r3,by=v3}];
    \path [name intersections={of=bbox and r4,by=v4}];
    \path [name intersections={of=bbox and r5,by=v5}];
    \path [name intersections={of=bbox and r6,by=v6}];
    \path [name intersections={of=bbox and r7,by=v7}];

    \draw[->, thick] (0)--(v1); 
    \draw[->, thick] (0)--(v2); 
    \draw[->, thick] (0)--(v3); 
    \draw[->, thick] (0)--(v4); 
    \draw[->, thick] (0)--(v5); 
    \draw[->, thick] (0)--(v6); 
    \draw[->, thick] (0)--(v7); 

\end{scope}

\begin{scope}[xshift=8.2cm]
    
    \clip (-\d,-\l) rectangle (\l,\d);

    \coordinate (0) at (0,0); 

    \draw[->,thick] (-\d,0)--(\l,0) node [pos = .17, sloped, above] {$1+z^{(-2,0,0,1)}$};     
    \draw[->,thick] (0,\d)--(0,-\l) node [pos = .03, right] {$1+z^{(0,2,1,0)}$};     

    \path[name path=r1] (0)--(2*\x,-\x) node [pos = .53, sloped, above] {$1+z^{(-4,2,1,2)}$}; 
    \path[name path=r2] (0)--(3*\x,-2*\x); 
    \path[name path=r3] (0)--(4*\x,-3*\x); 

    \path[name path=r4] (0)--(2*\x,-2*\x); 

    \path[name path=r5] (0)--(3*\x,-4*\x); 
    \path[name path=r6] (0)--(2*\x,-3*\x); 
    \path[name path=r7] (0)--(\x,-2*\x) node [pos = .53, sloped, below] {$1+z^{(-2,4,2,1)}$}; 

    \draw (3.6,-3.1) node {$\vdots$};
    \draw (3.15,-3.6) node {$\cdots$};

    \path[name path global=bbox] (current bounding box.south west) rectangle 
    (current bounding box.north east);
    \path [name intersections={of=bbox and r1,by=v1}];
    \path [name intersections={of=bbox and r2,by=v2}];
    \path [name intersections={of=bbox and r3,by=v3}];
    \path [name intersections={of=bbox and r4,by=v4}];
    \path [name intersections={of=bbox and r5,by=v5}];
    \path [name intersections={of=bbox and r6,by=v6}];
    \path [name intersections={of=bbox and r7,by=v7}];

    \draw[->, thick] (0)--(v1); 
    \draw[->, thick] (0)--(v2); 
    \draw[->, thick] (0)--(v3); 
    \draw[->, thick] (0)--(v4); 
    \draw[->, thick] (0)--(v5); 
    \draw[->, thick] (0)--(v6); 
    \draw[->, thick] (0)--(v7); 

\end{scope}

\end{tikzpicture}

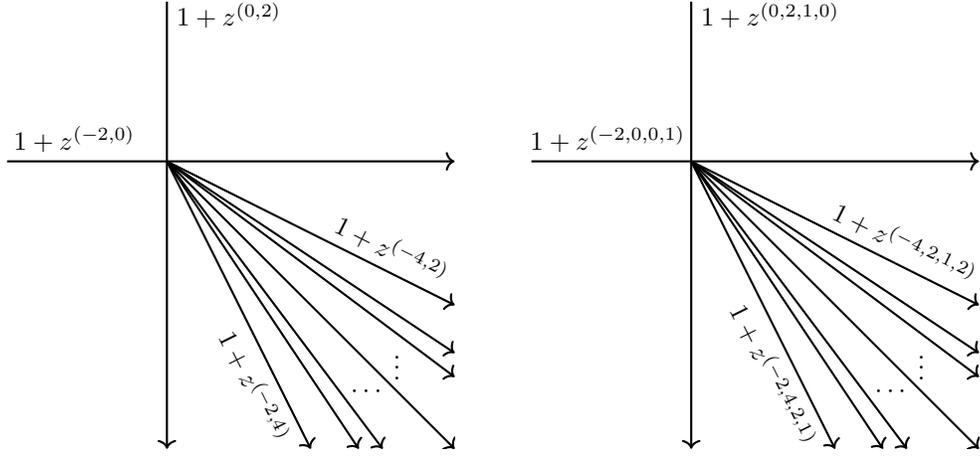
\captionof{figure}{\label{fig:ScatEx} Scattering diagrams associated to the Kronecker quiver ${ 1 \rightrightarrows 2 }$. Note that the injectivity assumption is satisfied in this example. The scattering diagram on the left is for the $\cA$ variety and the scattering diagram on the right is for the $\cX$ variety. Both are two dimensional.  
On the left, there is an outgoing wall through ${(n+1,-n)}$ and ${(n,-n-1)}$ for every ${n\in \Z_{>0}}$.
The corresponding scattering functions are 
${ 1 + z^{(-2 n-2, 2n)}}$ and $ {1 + z^{(-2 n, 2n+2)}}$ respectively.
There is one additional outgoing wall passing through $(1,-1)$ whose scattering function is $\lrp{1-z^{(-2,2)}}^{-2}$.
On the right, the scattering diagram for $\cX$ is the slice of the four dimensional scattering diagram for $\cAp$ where coordinates $(m,n)$ satisfy $m=p^*(n)$.
For each $n\in \Z_{>0}$ there is one outgoing wall though ${(2n+2,-2n,-n,-n-1)}$ and one through ${(2n,-2n-2,-n-1,-n)}$. Their scattering functions are $ {1+z^{(-2n-2,2n,n,n+1)}} $ and $ {1+z^{(-2n,2n+2,n+1,n)}} $ respectively. The last outgoing wall passes through ${(2,-2,-1,-1)}$ and has scattering function $ {\lrp{1+z^{(-2,2,1,1)}}^{-2} }$. See \cite{reineke}, \cite{gp}, and \cite{mandy} for details. }
\end{center}
\end{minipage}
\end{center}

\subsection{Multiplication of theta functions}\label{sec:multiplication}

In the previous subsection we reviewed the construction of theta functions in terms of broken lines.
Broken lines also elegantly encode the multiplication of theta functions. 
That is, 
given a product of arbitrary theta functions $\tf_p \cdot \tf_q$,
we can use broken lines to express the structure constants $\alpha\lrp{p,q,r}$ in the expansion 
\begin{align} \label{eq:product}
    \vartheta_p \cdot \vartheta_q = \sum_r \alpha(p,q,r) \vartheta_r.
\end{align}
We review the construction here.

First, pick a general endpoint $z$ near $r$.
\red{For a broken line $\gamma$, we denote by $I(\gamma)$ and $F(\gamma)$ the initial slope and the final slope respectively of $\gamma$, as in \thref{def:genbroken}.} 
Then define (\cite[Definition-Lemma~6.2]{GHKK})
\eq{
    \alpha_z (p, q, r) := \sum_{\substack{\lrp{\gamma^{(1)}, \gamma^{(2)}} \\ I(\gamma^{(1)})= p,\ I(\gamma^{(2)})= q\\ \gamma^{(1)}(0) = \gamma^{(2)}(0) = z\\
    F(\gamma^{(1)}) + F(\gamma^{(2)}) = r   }}   c(\gamma^{(1)})\ c(\gamma^{(2)}), }{eq:multibrokenline}
where the sum is summing over all pairs of broken lines $\lrp{\gamma^{(1)}, \gamma^{(2)}}$ ending at $z$ with initial slopes $I(\gamma^{(1)}) = p$, $I(\gamma^{(2)}) = q$ and the final slopes satisfying $F(\gamma^{(1)})+F(\gamma^{(2)}) =r$.
Gross-Hacking-Keel-Kontsevich show that for $z$ sufficiently close to $r$, $\alpha_z (p, q, r)$ is independent of $z$ and gives the structure constant $\alpha (p, q, r)$. See \cite[Proposition~6.4]{GHKK}. 
\red{For the moment, we will} say the pairs $\lrp{\gamma^{(1)},\gamma^{(2)}}$ appearing in \eqref{eq:multibrokenline} are {\it{balanced near $r$}}, or if $z=r$ we \red{will} simply say the pair $\lrp{\gamma^{(1)},\gamma^{(2)}}$ is {\it{balanced}}.
\red{We give a more precise definition of balanced broken line in \thref{def:balanced}.}

Now that we have described theta function multiplication, we are prepared to state more carefully the definition of a positive set-- the property required for a set and its dilations to define a graded ring.
\begin{definition}[{\cite[Definition~8.6]{GHKK}}]
\thlabel{def:positive_set}
A closed subset $S$ of $V^\trop(\R)$ is {\it{positive}} if for any non-negative integers $a$, $b$, any $p \in a S(\Z)$, $q \in b S(\Z)$, and any $r$ in $V^\trop(\Z)$ with $\alpha(p,q,r) \neq 0$, we have $r \in \lrp{a+b} S(\Z)$.  
\end{definition}

It will be convenient in the remainder of the text to work with $\cAp$,
where the injectivity assumption is satisfied, and recover results for both $\cA$ and $\cX$ from analogous results for $\cAp$.
\change{In turn, we will then recover results for the quotients of $\cA$ and fibers of $\cX$ discussed in Section~\ref{sec:RedDim} from the analogous results for $\cA$ and $\cX$.}
The following lemmas relate positive sets in $ \lrp{\cAp^\vee}^\trop(\R)$ to positive sets in $ \lrp{\cA^\vee}^\trop(\R)$ and $ \lrp{\cX^\vee}^\trop(\R)$. 
The proofs follow directly from the definitions.

\begin{lemma}\thlabel{lem:Apos}
Let $ \pi:\widetilde{M}^{\circ}_{\R} \to M^{\circ}_{\R}$ be the natural projection.  If $S\subset \widetilde{M}^{\circ}_{\R}$ is positive, then so is $\pi(S)\subset M^{\circ}_{\R}$. Similarly, if  $S\subset M^{\circ}_{\R}$ is positive, then so is $\pi^{-1}(S)\subset \widetilde{M}^{\circ}_{\R}$.
\end{lemma}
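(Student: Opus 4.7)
The plan is to derive both implications from the definition of positivity (\thref{def:positive_set}), using the fact (Section~\ref{sec:Ascat}) that broken lines in $M^\circ_\R$ are precisely the images under $\pi$ of broken lines in $\widetilde{M}^\circ_\R$, with decoration monomials obtained by pushforward. In each direction the core task is to transfer a balanced pair of broken lines across $\pi$ and then invoke positivity on the other side. Write $\alpha$ for the structure constants on $\cA$ and $\widetilde\alpha$ for those on $\cAp$.

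For the forward implication, assume $S \subset \widetilde{M}^\circ_\R$ is positive. Fix $a,b\in\N$, $p' \in a\pi(S)(\Z)$, $q' \in b\pi(S)(\Z)$, and $r' \in M^\circ$ with $\alpha(p',q',r')\neq 0$. First, produce integer lifts $p \in aS(\Z)$ of $p'$ and $q \in bS(\Z)$ of $q'$ using the splitting $\widetilde{M}^\circ = M^\circ \oplus N$. Then lift a balanced pair of broken lines in $M^\circ_\R$ realizing $\alpha(p',q',r')\neq 0$ to $\widetilde{M}^\circ_\R$ with initial exponents $p$ and $q$. Such lifts exist since $\cA$ broken lines are, by definition, projections of $\cAp$ broken lines, but the two lifted endpoints may differ by a vector in $N$. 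This endpoint mismatch is the main obstacle, and it is resolved by the $N$-translation invariance of the $\cAp$ broken-line rules: every scattering monomial exponent $\widetilde{p}_1^*(n)$ lies in $M^\circ \oplus N$, so shifting the $N$-component of a broken line's initial exponent shifts the $N$-component of every decoration monomial and of the endpoint by the same vector, while leaving the underlying spatial path intact. Applying such a shift to the second broken line aligns the endpoints and produces an honest balanced pair in $\widetilde{M}^\circ_\R$, witnessing $\widetilde\alpha(p,q,r)\neq 0$ for some integer lift $r$ of $r'$. Positivity of $S$ then gives $r \in (a+b)S(\Z)$, whence $r' = \pi(r) \in (a+b)\pi(S)(\Z)$.

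For the reverse implication, assume $S \subset M^\circ_\R$ is positive, and let $p \in a\pi^{-1}(S)(\Z)$, $q \in b\pi^{-1}(S)(\Z)$, $r \in \widetilde{M}^\circ$ with $\widetilde\alpha(p,q,r)\neq 0$. From the definition of $\pi^{-1}(S)$ it is immediate that $\pi(p) \in aS(\Z)$ and $\pi(q) \in bS(\Z)$. A balanced pair in $\widetilde{M}^\circ_\R$ witnessing $\widetilde\alpha(p,q,r)\neq 0$ projects under $\pi$ to a balanced pair in $M^\circ_\R$ with initial slopes $\pi(p)$, $\pi(q)$ and sum of final slopes $\pi(r)$. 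Because cluster scattering functions may be taken of the form $(1+z^{\widetilde{p}_1^*(n)})^c$ with $c \in \Z_{>0}$, all broken-line contributions to structure constants are nonnegative integers, so no cancellation can occur. Therefore $\alpha(\pi(p),\pi(q),\pi(r))\neq 0$, and positivity of $S$ yields $\pi(r) \in (a+b)S(\Z)$, i.e., $r \in (a+b)\pi^{-1}(S)(\Z)$.

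Thus the one nontrivial step is the endpoint-matching in the forward direction; handled by the $N$-translation argument, both implications reduce to straightforward manipulation of the definitions together with the nonnegativity of cluster structure constants.
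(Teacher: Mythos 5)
Your argument is essentially the one the paper has in mind: the authors give no proof beyond ``follows directly from the definitions,'' and your writeup supplies exactly the missing content --- projecting/lifting balanced pairs of broken lines, fixing the endpoint mismatch by a translation in the $N$-direction, and using nonnegativity of the scattering coefficients to rule out cancellation in the sums defining $\alpha$ and $\widetilde\alpha$. The reverse implication as you wrote it is complete.

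Two points in the forward direction deserve tightening. First, ``leaving the underlying spatial path intact'' is not literally true: shifting the $N$-component of the initial exponent changes the velocities, so the path in $\widetilde{M}^{\circ}_{\R}$ drifts in the $0\oplus N_{\R}$ direction (only its image under $\pi$ is unchanged). What makes the argument work is that every wall of $\scat^{\cAp}_{\seed}$ is invariant under translation by $0\oplus N_{\R}$ (the initial walls $(e_i,0)^{\perp}$ are, and this is preserved by the inductive construction since joints and the outgoing directions $-\widetilde{p}_1^*(n)$ keep producing such cones), together with the fact that $\langle (n_0,0),(0,h)\rangle=0$, so the drifted path meets the same walls with the same allowed bends and the same coefficients. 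You should say this explicitly; the observation that the exponents ``lie in $M^\circ\oplus N$'' is not by itself the relevant fact. Second, the existence of an \emph{integral} lift $p\in aS(\Z)$ of $p'\in a\pi(S)(\Z)$ is not automatic for an arbitrary closed positive $S$: the splitting produces $(p',0)$, which projects to $p'$ but need not lie in $aS$ (a finite set of points with irrational $N$-coordinates is vacuously positive while its projection need not be). This is really a gap in the lemma as stated rather than in your proof; in the paper's application the forward implication is only ever invoked for sets of the form $\pi^{-1}(S')$, where $(p',0)\in \pi^{-1}(aS')=a\pi^{-1}(S')$ does the job, but you should either record this hypothesis or restrict to that case.
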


\begin{lemma}\thlabel{lem:Xpos}
The subset $(\cX^{\vee})^{\trop}(\R)\subset (\cAp^{\vee})^{\trop}(\R)$ is positive. 
In particular, if the subset ${S\subset (\cAp^{\vee})^{\trop}(\R)}$ is positive then so is $S\cap (\cX^{\vee})^{\trop}(\R)$. 
Moreover, if $S$ is positive as a subset of $(\cX^{\vee})^{\trop}(\R)$, then it is also positive as a subset of $(\cAp^{\vee})^{\trop}(\R)$.
\end{lemma}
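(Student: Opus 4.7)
The plan is to reduce all three assertions to a single structural observation that is essentially recorded at the end of Section~\ref{sec:Xscat}: broken lines in $\scat^{\cAp}_\seed$ preserve $T_{N^\circ}$-weight. Indeed, all scattering functions may be taken of the form $f_\wall = (1 + z^{\wpp_1^*(n)})^c$ with $\wpp_1^*(n) = (p^*(n), n)$, and this vector is of $T_{N^\circ}$-weight $0$ since its $M^\circ$-component is exactly $p^*$ of its $N$-component. Hence the wall-crossing automorphisms $\frakp_\wall$ preserve $T_{N^\circ}$-weight, and by induction along a broken line $\gamma$ every decorating monomial of $\gamma$ has the same weight as $z^{I(\gamma)}$. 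As a consequence, a broken line whose initial exponent and endpoint both lie in the weight-$0$ slice is entirely contained in that slice, and $\scat^{\cX}_\seed$ being by definition this weight-$0$ slice, the broken-line data for $\cX$ coincides with the weight-$0$ broken-line data for $\cAp$. In particular, the structure constants agree:
\[ \alpha^{\cX}(p,q,r) = \alpha^{\cAp}(p,q,r) \qquad \text{for all } p,q,r \in (\cX^\vee)^\trop(\Z). \]
I would record this as a preliminary step, and then the three assertions fall out of bookkeeping.

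For the first assertion, observe that $(\cX^\vee)^\trop(\R) \subset (\cAp^\vee)^\trop(\R)$ is a linear subspace, so it equals every positive dilate of itself and it suffices to show: if $p,q \in (\cX^\vee)^\trop(\Z)$ and $\alpha^{\cAp}(p,q,r) \neq 0$, then $r \in (\cX^\vee)^\trop(\Z)$. But by definition of $\alpha^{\cAp}$ there exist broken lines $\gamma^{(1)}, \gamma^{(2)}$ with $I(\gamma^{(i)})$ of weight $0$ and $F(\gamma^{(1)}) + F(\gamma^{(2)}) = r$; weight preservation gives $F(\gamma^{(i)})$ of weight $0$, hence $r$ lies in the weight-$0$ slice.

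For the second assertion, let $S \subset (\cAp^\vee)^\trop(\R)$ be positive and put $S' := S \cap (\cX^\vee)^\trop(\R)$. Given $p \in a S'(\Z)$, $q \in b S'(\Z)$ and $r \in (\cX^\vee)^\trop(\Z)$ with $\alpha^{\cX}(p,q,r) \neq 0$, the equality of structure constants yields $\alpha^{\cAp}(p,q,r) \neq 0$. Since $a S' \subset a S$ and $b S' \subset b S$, positivity of $S$ in the ambient space forces $r \in (a+b) S(\Z)$; combined with $r \in (\cX^\vee)^\trop(\Z)$, we conclude $r \in (a+b) S'(\Z)$. For the third assertion, assume $S \subset (\cX^\vee)^\trop(\R)$ is positive there, and take $p \in aS(\Z)$, $q \in bS(\Z)$, and $r \in (\cAp^\vee)^\trop(\Z)$ with $\alpha^{\cAp}(p,q,r) \neq 0$. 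Since $aS, bS \subset (\cX^\vee)^\trop(\R)$, the first assertion forces $r \in (\cX^\vee)^\trop(\Z)$; then the structure-constant equality and positivity of $S$ inside $(\cX^\vee)^\trop(\R)$ give $r \in (a+b) S(\Z)$.

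The only step that requires any care is the equality of structure constants, which is the main obstacle in principle; but as explained above this reduces to checking that $\wpp_1^*(n)$ is of $T_{N^\circ}$-weight $0$, which is tautological. Everything else is unwinding of \thref{def:positive_set}, and I expect no additional subtlety.
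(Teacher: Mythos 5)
Your proof is correct and follows precisely the approach the paper indicates: the paper gives no written argument beyond ``the proofs follow directly from the definitions,'' but the key observation you isolate---that the scattering functions are series in $z^{\wpp_1^*(n)} = z^{(p^*(n),n)}$, which has $T_{N^\circ}$-weight $0$, so exponent vectors along any broken line retain the $T_{N^\circ}$-weight of the initial exponent, and broken lines with weight-$0$ data stay in the weight-$0$ slice---is exactly what the paper records at the end of Section~\ref{sec:Xscat} as the basis for these claims. Your derivation of the equality of structure constants and the subsequent bookkeeping for the three assertions correctly unwinds \thref{def:positive_set} in light of that observation, so this is a faithful elaboration of the paper's intended argument.
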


\change{
\begin{remark}\thlabel{rem:PosRedDim}
The same reasoning applies for the quotients of $\cA$ and fibers of $\cX$ of Section~\ref{sec:RedDim}.
In detail,
for $\cX_\phi$, let $\pi$ be the quotient map $(\cX^\vee)^\trop(\R) \to (\cX^\vee_\phi)^\trop(\R)$.
If $S\subset (\cX^\vee)^\trop(\R)$ is positive, so is $\pi(S) \subset (\cX^\vee_\phi)^\trop(\R)$.
If $S\subset (\cX^\vee_\phi)^\trop(\R)$ is positive, so is $\pi^{-1}(S) \subset (\cX^\vee)^\trop(\R)$.
Next, for $\cA_{\vb{t}}/T_{H^\circ}$, the subset $\big((\cA_{\vb{t}}/T_{H^\circ})^\vee\big)^\trop(\R) \subset (\cA^\vee_{\vb{t}})^\trop(\R)$ is positive.
If the subset ${S\subset (\cA^{\vee}_{\vb{t}})^{\trop}(\R)}$ is positive then so is $S\cap \big((\cA_{\vb{t}}/T_{H^\circ})^\vee\big)^\trop(\R)$. 
Moreover, if $S$ is positive as a subset of $\big((\cA_{\vb{t}}/T_{H^\circ})^\vee\big)^\trop(\R)$, then it is also positive as a subset of $(\cA^\vee_{\vb{t}})^\trop(\R)$.
\end{remark}
}

The main challenge of relating positivity to broken line convexity is that the theta functions are indexed by initial exponent vectors of broken lines-- so an initial {\emph{direction}}-- while broken line convexity relates to segments connecting specified {\emph{points}}.
However, there is a construction similar to broken lines called {\it{jagged paths}} which, Mark Gross informs us, conceptually predated broken lines.
Unlike broken lines, jagged paths have no unbounded domain of linearity.
Instead, in the jagged path description of theta functions, it is the starting point of the jagged path that indexes the theta function.
Generically, multiplication of theta functions is described in terms of pairs of jagged paths ending at the same point and having final exponent vectors sum to 0.  (Non-generically, this condition on the sum of final exponent vectors may have contributions from a wall as well.)
See \cite[Section~3]{GSTheta} for an introduction to this construction.

This description of the multiplication of theta functions in terms of jagged paths generalizes the following toric description of multiplication of characters.
Let $\bL$ be the character lattice of a torus $T$, and let $p$, $q$ be in $\bL\otimes \Q$.
Then every rational point along the line segment $\overline{pq}$ corresponds to a product of characters of the form $z^{a p} \cdot z^{b q}=z^{a p + b q}$.
See Figure~\ref{fig:WeighedAvg}.

\noindent
\begin{center}
\begin{minipage}{.85\linewidth}
\captionsetup{type=figure}
\begin{center}
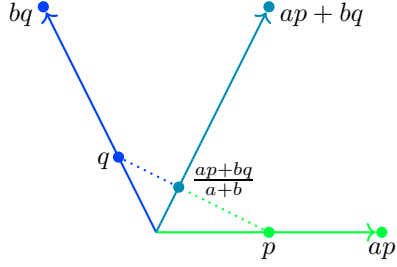

\begin{tikzpicture}

    \def\a{2}
    \def\b{3}
    \def\div{.2}
    \def\px{1.5}
    \def\py{0}
    \def\qx{-.5}
    \def\qy{1}
    \def\op{.25}
    \definecolor{color1}{rgb}{0,1,.25}
    \definecolor{color2}{rgb}{0,.25,1}
    \colorlet{color3}{color1!40!color2}

    \node[circle, fill, inner sep = 1.5pt, color = color1] (p) at (\px,\py) {};
    \node[circle, fill, inner sep = 1.5pt, color = color1] (ap) at (\a*\px,\a*\py) {};
    \node[circle, fill, inner sep = 1.5pt, color = color2] (q) at (\qx,\qy) {};
    \node[circle, fill, inner sep = 1.5pt, color = color2] (bq) at (\b*\qx,\b*\qy) {};
    \node[circle, fill, inner sep = 1.5pt, color = color3] (sum) at (\a*\px+\b*\qx,\a*\py+\b*\qy) {};
    \node[circle, fill, inner sep = 1.5pt, color = color3] (avg) at (\div*\a*\px+\div*\b*\qx,\div*\a*\py+\div*\b*\qy) {};

    \draw[->,thick, color1] (0,0) -- (ap); 
    \draw[->,thick, color2] (0,0) -- (bq); 
    \draw[->,thick, color3] (0,0) -- (sum);
    \draw[dotted,thick,color1] (p)--(avg);
    \draw[dotted,thick,color2] (q)--(avg);
    
    \node at (\px,\py-.25) {$p$};    
    \node at (\a*\px,\a*\py-.25) {$a p$};    
    \node at (\qx-.2,\qy-.05) {$q$};    
    \node at (\b*\qx-.3,\b*\qy-.1) {$b q$};    
    \node at (\a*\px+\b*\qx+.7,\a*\py+\b*\qy-.1) {$a p + b q$};    
    \node at (\div*\a*\px+\div*\b*\qx+.6,\div*\a*\py+\div*\b*\qy+.1) {$\frac{a p + b q}{a+b}$};    

\end{tikzpicture}
\captionof{figure}{\label{fig:WeighedAvg} The rational points along the line segment $\overline{pq}$ can be expressed as weighted averages of $p$ and $q$, given here as $\frac{a p +b q}{a+b}$. Rescaling by $(a+b)$ gives the exponent vector of product $z^{a p}\cdot z^{b q}$.}
\end{center}
\end{minipage}
\end{center}

The way we associate a broken line segment $\tilde{\gamma}$
to a pair of balanced broken lines $\lrp{\gamma^{(1)},\gamma^{(2)}}$ ending at $x_0$ and a pair of positive integers $(a,b)$ in Section~\ref{sec:key_lemma} stems from this construction.
One endpoint of the segment is $\tilde{\gamma}(0)=\frac{I(\gamma^{(1)})}{a}$, the other is $\tilde{\gamma}(T)=\frac{I(\gamma^{(2)})}{b}$, and $\tilde{\gamma}$ passes through $\tilde{x}_0:=\frac{x_0}{a+b}$ at time $\frac{b}{a+b}T$-- the weighted average of $0$ and $T$.
In this sense, $\tilde{x}_0$ is the weighted average of $\frac{I(\gamma^{(1)})}{a}$ and $\frac{I(\gamma^{(2)})}{b}$ along $\tilde{\gamma}$.

\section{Broken line convexity}\label{sec:blc}

Throughout this section, as well as Sections~\ref{sec:key_lemma} and \ref{sec:in_alg},
we take $\scat$ to be the scattering diagram of a cluster variety $V^\vee$ of type $\cA$ for which the injectivity assumption is satisfied, {\it{e.g.}} any $\cAp$ cluster variety.

\subsection{Non-generic broken lines}
\label{sec:non-generic}

To define a good notion of convexity using broken lines we need to consider \emph{non-generic} broken lines which we introduce in \thref{def:broken} below, \emph{c.f.} \thref{rem:motivation}. 
Roughly speaking, these are broken lines that pass through $ \Sing(\scat)$ or have endpoint lying in $\Supp(\scat)$.
In order to control how a non-generic broken line bends we consider a sequences of generic broken lines approximating it. 
To deal with convergence in $V^{\trop}(\R)$ recall that a choice of initial seed gives an identification of $V^{\trop}(\R)$ with $ M^{\circ}_{\R}$. So a choice of seed gives rise to a topology in $V^{\trop}(\R)$: we simply identify $V^{\trop}(\R)$ with $ M^{\circ}_{\R}$ endowed with its usual topology. 
The topology on  $V^{\trop}(\R)$ induced in this way does not depend on the choice of seed as the different identifications are related by piecewise linear bijective maps. 
These maps are open in the topologies given by the different choices of seed and therefore, they are homeomorphisms.

\begin{definition}\thlabel{def:conv_supp}
We say that a sequence of piecewise linear rays $\left( \gamma_k:(-\infty, 0]\to M^{\circ}_{\R} \right)_{k \in \N}$ converges to a piecewise linear ray $\gamma:(-\infty, 0]\to M^{\circ}_{\R}$ if for every $t \in (-\infty, 0]$ the sequence $(\gamma_k(t))_{k \in \N}$ converges to $\gamma(t)$. 
\end{definition}

\begin{notation}
\thlabel{not:numbering}
Let $ \gamma $ be a generic broken line and let $x_1, \dots , x_s $ be its bending points. From now on we assume that these points are numbered ``backwards". That is, if $t_i\in (-\infty, 0)$ is the time at which $\gamma $ bends at $x_i$ then we assume that $t_s < \dots <t_1 < 0$. 
We label the bending walls according to this numbering as well:
for each $x_i$ we have a finite collection of bending walls $\lrc{\wall_{i_1}, \dots ,\wall_{i_{r_i}}}$ contributing\footnote{To clarify, $\gamma$ may cross infinitely many walls, but by definition of broken line it may only bend at finitely many.}
to $\gamma$'s bend at $x_i$.
{\red{(Note that $r_1+\dots +r_s$ is the total number of bendings contributing to $\gamma$.)}}
Observe that a bending wall could appear more than once-- we might have $\wall_{{i_1}_{j_1}}=\wall_{{i_2}_{j_2}}$ for ${i_1}_{j_1} \neq {i_2}_{j_2}$. 
Next, the bounded domains of linearity of $ \gamma$ are precisely the line segments $L_{i}$ with endpoints $x_{i}$ and $ x_{i+1}$, for $ i \in \{ 0, \dots , s-1 \}$ and we let $L_s $ be the unique unbounded domain of linearity of $\gamma $. This numbering also induces a numbering on the monomials attached to the domains of linearity, the slopes, the vectors orthogonal to walls and so on. See Figure~\ref{fig:numbering}. Further, we let $\ray_i$ be the ray spanned by $x_i$. In particular, $\ray_i \subset \wall_{i_j}$.  
\noindent
\begin{minipage}{\linewidth}
\captionsetup{type=figure}
\begin{center}
\begin{tikzpicture}

    \node [inner sep=0] (0) at (0,0) {$\bullet$};

    \draw [name path=walls1] (0)--++(150:5) node [above left] {$\lrc{\wall_{s_j}}$}; 
    \draw [name path=walls1m1] (0)--++(130:5) node [above] {$\lrc{\wall_{{s-1}_j}}$}; 
    \draw [name path=wall11] (0)--++(100:5) node [above] {$\lrc{\wall_{{i+1}_j}}$}; 
    \draw [name path=wall12] (0)--++(75:5) node [above] {$\lrc{\wall_{{i}_j}}$}; 
    \draw [name path=walls2m1] (0)--++(40:5)node [above right] {$\lrc{\wall_{2_j}}$}; 
    \draw [name path=walls2] (0)--++(20:5)node [right] {$\lrc{\wall_{1_j}}$}; 
    \draw[->] (0)--++(75:4.5) node [right] {$\ray_i $ }; 
    \draw[->] (75:2.4)--++(165:.5) node [above] {$n_{0,i}$ }; 

    \path (130:4.8)--(100:4.8) node [midway, sloped] {$\cdots$};
    \path (75:4.8)--(40:4.8) node [midway, sloped] {$\cdots$};

    \node [inner sep=0] (p) at (170:4) {}; 
    \node [inner sep=0] (q) at (5:4.35) {$\bullet$}; 

    \path [name path=seg1] (p) --++(30:2);
    \path [name intersections={of=seg1 and walls1,by=xs1}];

    \path [name path=seg2] (xs1) --++(75:2);
    \path [name intersections={of=seg2 and walls1m1,by=xs1m1}];

    \path [name path=seg3] (xs1m1) --++(40:3.5);
    \path [name intersections={of=seg3 and wall11,by=x11}];

    \coordinate (l1) at ($(xs1m1)!.25!(x11)$);
    \coordinate (l2) at ($(xs1m1)!.75!(x11)$);

    \path [name path=seg4] (x11) --++(-10:3.5);
    \path [name intersections={of=seg4 and wall12,by=x12}];

    \path [name path=seg5] (x12) --++(-45:5);
    \path [name intersections={of=seg5 and walls2m1,by=xs2m1}];

    \coordinate (r1) at ($(x12)!.25!(xs2m1)$);
    \coordinate (r2) at ($(x12)!.75!(xs2m1)$);

    \path [name path=seg6] (xs2m1) --++(-30:3.5);
    \path [name intersections={of=seg6 and walls2,by=xs2}];

    \draw[thick, color=orange]  (p)--(xs1) node [midway, sloped, above] {${\color{black}L_{s}}$}--(xs1m1) node [pos=.6, sloped, above] {${\color{black}L_{s-1}}$}--(l1);
    \path (l1)--(l2) node [midway, sloped, color=orange] {$\cdots$};
    \draw[thick, color=orange]  (l2)--(x11)--(x12)--(r1);
    \path (x11)--(x12) node [midway, sloped, below, color=orange] {${\color{black}L_i}$};
    \path (x11)--(x12) node [midway, sloped, above, color=black] {$c_iz^{\mono_i}$};
    \path (r1)--(r2) node [midway, sloped, color=orange] {$\cdots$};
    \draw[thick, color=orange]  (r2)--(xs2m1)--(xs2) node [pos=.3, sloped, below] {${\color{black}L_{1}}$}--(q) node [pos=.6, left] {${\color{black}L_{0}}$} node[pos=.6, right] {${\color{black}c(\gamma)z^{\mono_0}}$};

    \path (p)--++(-160:.3) ;
    \path (q)--++(-10:.3) node {$x_0$};
    \path (xs1)--++(-90:.4) node {$x_s$};
    \path (xs1m1)--++(-5:.5) node {$x_{s-1}$};
    \path (x11)--++(165:.4) node {$x_{i+1}$};
    \path (x12)--++(10:.55) node {$x_{i}$};
    \path (xs2m1)--++(5:.7) node {$x_2$};
    \path (xs2)--++(-15:.5) node {$x_1$};

    \node [circle, fill, inner sep=1pt] at (xs1) {};
    \node [circle, fill, inner sep=1pt] at (xs1m1) {};
    \node [circle, fill, inner sep=1pt] at (x11) {};
    \node [circle, fill, inner sep=1pt] at (x12) {};
    \node [circle, fill, inner sep=1pt] at (xs2m1) {};
    \node [circle, fill, inner sep=1pt] at (xs2) {};

\end{tikzpicture}


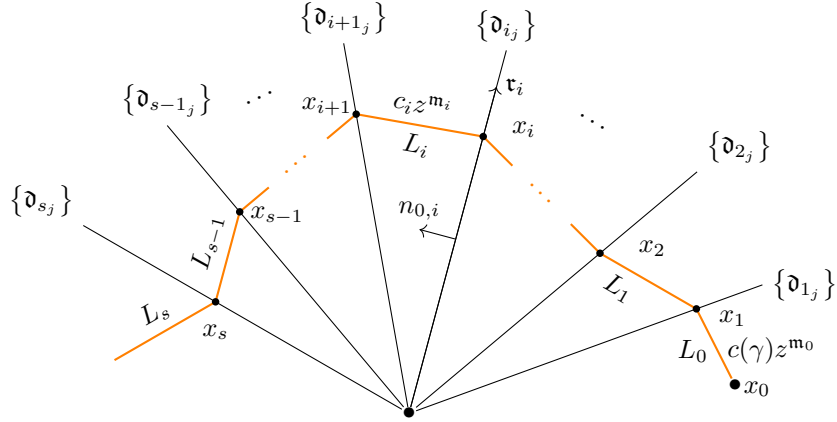
\captionof{figure}{\label{fig:numbering} Labeling the bending points, bending walls, and domains of linearity of $\gamma$.} 
\end{center}
\end{minipage}
\end{notation}

\begin{definition}\thlabel{def:broken}
Let $\mono \in M^\circ \setminus \lrc{0}$ and $x_0 \in M^\circ_\R$. 
A {\it{non-generic broken line}} with initial exponent vector $\mono$ and endpoint $x_0$ is a piecewise linear ray $\gamma: (-\infty , 0] \to M^{\circ}_{\R}$ with $\gamma(0)=x_0$ in $\Supp(\scat)$ or $\text{Im}(\gamma)\cap \Sing(\mathfrak{D})\neq \varnothing$, together with a sequence of generic broken lines $(\gamma_k)_{k \in \N}$ satisfying:
\begin{itemize}
    \item $\lrp{\Supp(\gamma_k)}_{k\in \N}$ converges to $\text{Im}(\gamma)$ (see \thref{def:conv_supp});
    \item $I(\gamma_k) = \mono$ for all $k \in \N$; and
    \item all the broken lines in the sequence bend at the same collection of walls $\lrc{\wall_{i_j}}$ (as in \thref{not:numbering}), in the same order and have the same decorating monomials.
\end{itemize}   
We call $\text{Im}(\gamma)$ the \emph{support} of the non-generic broken line and denote it by $\Supp(\gamma)$. 
The sequence $\lrp{\gamma_k}_{k\in \N}$
induces a sequence of ordered sets of bending points $ \lrp{x_{1;k}, \dots, x_{s;k} }_{k\in \N} $ converging to an ordered set of points $\lrp{x_1, \dots, x_s} $, where $x_i\in \Supp(\gamma) \cap \bigcap_{j=1}^{r_i} \wall_{i_j}$.
We call $x_1, \dots , x_s$ the {\it{bending points}} of the non-generic broken line. 
As in the generic case, for non-zero $x_i$ we let $\ray_i $ be the ray $\R_{\geq 0}x_i$. 
In both the generic and non-generic cases we will say that $\gamma$ bends at $\ray_i $ for each $i \in \{ 1, \dots , s \}$, or bends at the origin if $x_i=0$.
\end{definition}

\red{This notion of {\it{non-generic broken line}} is closely related to Carl-Pumperla-Siebert's {\it{degenerate broken line}} from \cite{CPS}.  We thank the anonymous referee for pointing out this connection.}

\begin{remark}
Notice that we could also describe generic broken lines as limits, {\it{e.g.}} with a constant sequence, and thereby use a single description for all broken lines. 
\end{remark}

We also consider domains of linearity associated to a non-generic broken line $\gamma$. These are the limits of the domains of linearity of the elements in the sequence $(\gamma_k)_{k\in \N}$. Namely, the bounded domains of linearity of $ \gamma $ are the line segments $ L_i$ with endpoints $x_i$ and  $x_{i+1}$ for $i \in \{0, \dots , s-1 \} $. We let $L_s$ be the unique unbounded domain of linearity of $\gamma$. 
Then
\[
\Supp(\gamma)=L_s \cup \dots \cup L_0.
\]
Notice that it might happen that $x_{i}=x_{i+1}$ for some $i$. In this case the domain of linearity $L_i$ is degenerate but we still record it as a domain of linearity. 
In particular, the domains of linearity of $\gamma$ are in bijection with the domains of linearity of any of the $\gamma_k$'s. 
If we had not fixed a sequence of generic broken lines, the non-generic broken line $\gamma$ may not have had a well defined collection of bending walls since it is allowed to intersect $\Sing\lrp{\scat}$.
However, the elements of the sequence $(\gamma_k)_{k \in \N}$ converging to $\gamma $ all have the same collection of bending walls and the same list of attached monomials, say $\lrc{\wall_{i_1}, \dots, \wall_{i_{r_i}} }_{1\leq i \leq s}$  and $ c_0 z^{\mono_0}, \dots ,c_s z^{\mono_s}$, respectively.
When working with non-generic broken lines we will always retain all of this information and attach the monomial $c_i z^{\mono_i}$ to the domain of linearity $L_i$ of $\gamma $.  We define
\[
\Par(\gamma):=\left( (L_s, \mono_s), \dots , (L_0, \mono_0)\right).
\]
In particular,  if a domain of linearity for the elements of $(\gamma_k)_{k \in \N}$ contracts to a point via the limit, {\emph{we still record the exponent vector associated to this degenerate domain of linearity.}} We call $ \mono_i$ the slope or exponent vector of $\gamma$ at $L_i$, even if it is degenerate, and set $I(\gamma)= \mono_s$. Occasionally, we write $\gamma = \left( \Par(\gamma) , c_s, \dots , c_0 \right) $ to codify a parametrization of $\gamma$ and the monomials attached to the domains of linearity of $\gamma$.

\begin{example}
\thlabel{ex:non-generic}
Let $\scat$ be a scattering diagram of type $A_2$ and let $\mono= (0,-1)$ and $x_0=(1,0)$. 
We consider the limit of a sequence of generic broken lines bending first at the wall in $e_2^\perp$, then at the wall in $e_1^\perp$, and ending in the interior of the first quadrant, with endpoints approaching $x_0$.  See Figure~\ref{fig:A2SingBL}. In this case $0$ is the only bending point of $\gamma$.
\noindent
\begin{center}
\begin{minipage}{.85\linewidth}
\captionsetup{type=figure}
\begin{center}
\begin{tikzpicture}

    \def\x{2}
    \def\d{1}
    \def\l{3.5}
    \def\op{.25}
    \def\Qxo{1.5}
    \def\Qyo{2.5}
    \def\Qxi{1.6}
    \def\Qyi{2}
    \def\Qxii{1.8}
    \def\Qyii{1.3}
    \def\Qxiii{1.9}
    \def\Qyiii{.8}
    \definecolor{color1}{rgb}{0,1,.25}
    \definecolor{color2}{rgb}{0,.25,1}
    \colorlet{color3}{color1!50!color2}

    \path (-\l,0) coordinate (3) --++ (\l,0) coordinate (0) --++ (\l,0) coordinate (1);
    \path (0,\l) coordinate (2) --++ (0,-2*\l) coordinate (4) --++ (\l,0) coordinate (5);

    \draw[name path = wall1, thick, ->] (3) -- (1);
    \draw[name path = wall2, thick, ->] (2) -- (4);
    \draw[name path = wall3, thick, ->] (0) -- (5);

    \node at (-.75,\l) {$1+z^{e_2^*}$};
    \node at (-3,.35) {$1+z^{-e_1^*}$};
    \node at (3.25,-2) {$1+z^{e_2^*-e_1^*}$};

    \path [name path = p, postaction={decorate,decoration={text along path,text align={right, right indent=15pt},text={${\dots}${}}}}, postaction={decorate,decoration={text along path,text align={right, right indent=42pt},text={${\dots}${}}}}] plot [smooth] coordinates {(\Qxo,\Qyo) (\Qxi,\Qyi) (\Qxii,\Qyii) (\Qxiii,\Qyiii) (\x,0)}; 

    \path [name path = g01] (\Qxo,\Qyo)--++(-\l,0);
    \path [name intersections={of=g01 and wall2,by=v01}];
    \path [name path = g02] (v01)--++(-\l,-\l);
    \path [name intersections={of=g02 and wall1,by=v02}];

    \path [name path = g11] (\Qxi,\Qyi)--++(-\l,0);
    \path [name intersections={of=g11 and wall2,by=v11}];
    \path [name path = g12] (v11)--++(-\l,-\l);
    \path [name intersections={of=g12 and wall1,by=v12}];

    \path [name path = g21] (\Qxii,\Qyii)--++(-\l,0);
    \path [name intersections={of=g21 and wall2,by=v21}];
    \path [name path = g22] (v21)--++(-\l,-\l);
    \path [name intersections={of=g22 and wall1,by=v22}];

    \path [name path = g31] (\Qxiii,\Qyiii)--++(-\l,0);
    \path [name intersections={of=g31 and wall2,by=v31}];
    \path [name path = g32] (v31)--++(-\l,-\l);
    \path [name intersections={of=g32 and wall1,by=v32}];

    \draw [thick, color=orange] (\Qxo,\Qyo)--(v01) node [pos=.5, sloped, above] {\tc{black}{$z^{-e_1^*}$}}--(v02) node [pos=.5, sloped, above] {\tc{black}{$z^{-e_1^*-e_2^*}$}}--++(0,-\l) coordinate (u);
    \path [postaction={decorate,decoration={text along path,text align=center,text={${z^{-e_{2}^*}}${}},raise=5pt}}] (u)--(v02);

    \draw [thick, color=orange] (\Qxi,\Qyi)--(v11)--(v12) --++(0,-\l);
    \draw [thick, color=orange] (\Qxii,\Qyii)--(v21)--(v22) --++(0,-\l);

    \draw [ultra thick, color=orange] (\x,0)--(0) node [pos=.4, sloped, below] {\tc{black}{$z^{-e_1^*}$}} node [pos=.5, sloped, above] {\tc{black}{$L_0$}}--++(0,-\l) coordinate (u2);
   \path [postaction={decorate,decoration={text along path,text align=center,text={${z^{-e_{2}^*}}${}},raise=-12pt}}] (u2)--(0);
   \path [postaction={decorate,decoration={text along path,text align=center,text={${L_2}${}},raise=7pt}}] (u2)--(0);

    \node [circle, fill, inner sep = 1.5pt, color = black] at (\Qxo,\Qyo) {};
    \node [circle, fill, inner sep = 1.5pt, color = black] at (\Qxi,\Qyi) {};
    \node [circle, fill, inner sep = 1.5pt, color = black] at (\Qxii,\Qyii) {};
    \node [circle, fill, inner sep = 1.5pt, color = black] at (\x,0) {};
    
    \node at (\Qxo+.45,\Qyo+.2) {$\gamma_1(0)$};
    \node at (\Qxi+.475,\Qyi+.15) {$\gamma_2(0)$};
    \node at (\Qxii+.5,\Qyii+.1) {$\gamma_k(0)$};
    \node at (\x+.33,-.3) {$x_0$};

    \coordinate (ex) at ($(0)!.5!(v22)$); 
    \path (ex)--++(0,-.75*\l) node  {$\cdots$};

    \node at (-1.35*\x,-\l) {$\gamma_1$};
    \node at (-1.1*\x,-\l) {$\gamma_2$};
    \node at (-.75*\x,-\l) {$\gamma_k$};
    \node at (-.23,-\l) {$\gamma$};
    
    \coordinate (ul) at ($(v21)!0.5!(v22)$);
    \coordinate (ll) at ($(v31)!0.5!(v32)$);
    \path (ul)--(ll) node [pos=-.6,sloped] {$\cdots$};

    \coordinate (left) at ($(v12)!.5!(v22)$); 
    \path (left)--++(0,-.75*\l) node  {$\cdots$};
    
    \path (-.65,0) -- (0,.65) node [pos=.5,sloped] {$L_1$};

\end{tikzpicture}

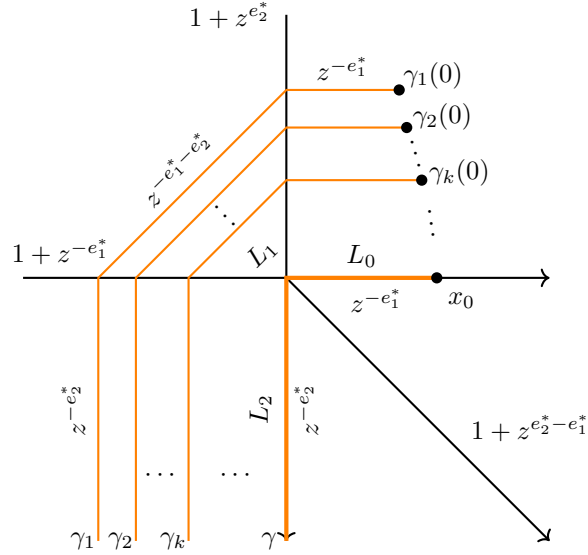
\captionof{figure}{\label{fig:A2SingBL} A non-generic broken line with a degenerate domain of linearity.} 

\end{center}
\end{minipage}
\end{center}

\end{example}

\begin{remark}
\thlabel{abuse}
By abuse of language, we frequently say $\gamma$ is a non-generic broken line without explicitly mentioning the sequence of generic broken lines converging to it. When we say $\gamma$ is a broken line we refer to either a generic one or a non-generic one. 
Moreover, we systematically identify a domain of linearity $L\subseteq (-\infty, 0]$ of a broken line $ \gamma $ with its image $\gamma(L) $.
\end{remark}

\begin{remark}
\thlabel{determined}
As in the generic case, if we know $\Par(\gamma) $, then the non-generic broken line $\gamma$ is completely determined by the condition $c_{s}=1$. 
\end{remark}

\begin{definition}
\thlabel{def:balanced}
\red{Given two generic broken lines $\gamma^{(1)}$, $\gamma^{(2)}$, we say that 
$\lrp{\gamma^{(1)}, \gamma^{(2)}}$ is a {\it{based pair of generic broken lines}} if 
$\gamma^{(1)}(0)=\gamma^{(2)}(0)$.
Next, if $\gamma^{(1)}$, $\gamma^{(2)}$ are {\emph{any}} broken lines (possibly non-generic), we say that $\lrp{\gamma^{(1)}, \gamma^{(2)}}$ is a {\it{based pair of broken lines}} if every $\lrp{ \gamma^{(1)}_k, \gamma^{(2)}_k }$ in the defining sequence is a based pair of generic broken lines. 
We say that $(\gamma^{(1)}, \gamma^{(2)})$ is {\it{balanced at $ x_0 $}} if it is a based pair of broken lines with $\gamma^{(1)}(0) = \gamma^{(2)}(0) = x_0 $  and $F(\gamma^{(1)})+F(\gamma^{(2)})=x_0$.
(See Figure~\ref{fig:A2Balanced}). 
In this situation we say that the generic based pairs $\lrp{\gamma^{(1)}_k, \gamma^{(2)}_k}$ in the sequence are {\it{balanced near $x_0$}}.} 
\end{definition}

\noindent
\begin{center}
\begin{minipage}{.85\linewidth}
\captionsetup{type=figure}
\begin{center}
\begin{tikzpicture}

    \def\x{2}
    \def\d{1}
    \def\l{3.5}
    \def\op{.25}
    \def\Qxo{1.5}
    \def\Qyo{2.5}
    \def\Qxi{1.6}
    \def\Qyi{2}
    \def\Qxii{1.8}
    \def\Qyii{1.3}
    \def\Qxiii{1.9}
    \def\Qyiii{.8}
    \definecolor{color1}{rgb}{0,1,.25}
    \definecolor{color2}{rgb}{0,.25,1}
    \colorlet{color3}{color1!50!color2}

    \path (-\l,0) coordinate (3) --++ (\l,0) coordinate (0) --++ (\l,0) coordinate (1);
    \path (0,\l) coordinate (2) --++ (0,-2*\l) coordinate (4) --++ (\l,0) coordinate (5);

    \draw[name path = wall1, thick, ->] (3) -- (1);
    \draw[name path = wall2, thick, ->] (2) -- (4);
    \draw[name path = wall3, thick, ->] (0) -- (5);

    \node at (-.75,\l) {$1+z^{e_2^*}$};
    \node at (-3,.35) {$1+z^{-e_1^*}$};
    \node at (3.25,-2) {$1+z^{e_2^*-e_1^*}$};

    \path [name path = p, postaction={decorate,decoration={text along path,text align={right, right indent=5pt},text={${\dots}${}}}}] plot [smooth] coordinates {(\Qxo,\Qyo) (\Qxi,\Qyi) (\Qxii,\Qyii) (\Qxiii,\Qyiii) (\x,0)}; 

    \path [name path = g01] (\Qxo,\Qyo)--++(-\l,0);
    \path [name intersections={of=g01 and wall2,by=v01}];
    \path [name path = g02] (v01)--++(-\l,-\l);
    \path [name intersections={of=g02 and wall1,by=v02}];

    \path [name path = g11] (\Qxi,\Qyi)--++(-\l,0);
    \path [name intersections={of=g11 and wall2,by=v11}];
    \path [name path = g12] (v11)--++(-\l,-\l);
    \path [name intersections={of=g12 and wall1,by=v12}];

    \path [name path = g21] (\Qxii,\Qyii)--++(-\l,0);
    \path [name intersections={of=g21 and wall2,by=v21}];
    \path [name path = g22] (v21)--++(-\l,-\l);
    \path [name intersections={of=g22 and wall1,by=v22}];

    \path [name path = g31] (\Qxiii,\Qyiii)--++(-\l,0);
    \path [name intersections={of=g31 and wall2,by=v31}];
    \path [name path = g32] (v31)--++(-\l,-\l);
    \path [name intersections={of=g32 and wall1,by=v32}];

    \draw [thick, color=color1] (\Qxo,\Qyo)--(v01) node [pos=.5, sloped, above] {\tc{black}{$z^{-e_1^*}$}}--(v02) node [pos=.5, sloped, above] {\tc{black}{$z^{-e_1^*-e_2^*}$}}--++(0,-\l) coordinate (u);
    \path [postaction={decorate,decoration={text along path,text align=center,text={${z^{-e_{2}^*}}${}},raise=5pt}}] (u)--(v02);

    \draw [thick, color=color1] (\Qxi,\Qyi)--(v11)--(v12) --++(0,-\l) node [pos=1, below] {\tc{black}{$\gamma^{(1)}_2$}};
    \draw [thick, color=color1] (\Qxii,\Qyii)--(v21)--(v22) --++(0,-\l) node [pos=1, below] {\tc{black}{$\gamma^{(1)}_3$}};
    \draw [thick, color=color1] (\Qxiii,\Qyiii)--(v31)--(v32) --++(0,-\l);

  \draw [very thick, color=color1] (\x,0)--(0) node [pos=.4, sloped, below] {\tc{black}{$z^{-e_1^*}$}}--++(0,-\l) coordinate (u2) node [pos=1, below] {\tc{black}{$\gamma^{(1)}$}};
  \path [postaction={decorate,decoration={text along path,text align=center,text={${z^{-e_{2}^*}}${}},raise=-12pt}}] (u2)--(0);

    \draw[thick, color=color2] (\Qxo,\Qyo) -- (\l,\Qyo) node [pos=.7, sloped, above] {\tc{black}{$z^{2 e_1^*}$}} node [right, pos=1] {\tc{black}{$\gamma^{(2)}_1$}};
    \draw[thick, color=color2] (\Qxi,\Qyi) -- (\l,\Qyi) node [right, pos=1] {\tc{black}{$\gamma^{(2)}_2$}};
    \draw[thick, color=color2] (\Qxii,\Qyii) -- (\l,\Qyii) node [right, pos=1] {\tc{black}{$\gamma^{(2)}_3$}};
    \draw[thick, color=color2] (\Qxiii,\Qyiii) -- (\l,\Qyiii) node [right, pos=1] {\tc{black}{$\gamma^{(2)}_4$}};
    \draw[very thick, color=color2] (\x,0) -- (\l,0) node [pos=.6, below] {\tc{black}{$z^{2 e_1^*}$}} node [right, pos=1] {\tc{black}{$\gamma^{(2)}$}};

    \node [circle, fill, inner sep = 1.5pt, color = color3] at (\Qxo,\Qyo) {};
    \node [circle, fill, inner sep = 1.5pt, color = color3] at (\Qxi,\Qyi) {};
    \node [circle, fill, inner sep = 1.5pt, color = color3] at (\Qxii,\Qyii) {};
    \node [circle, fill, inner sep = 1.5pt, color = color3] at (\Qxiii,\Qyiii) {};
    \node [circle, fill, inner sep = 1.5pt, color = color3] at (\x,0) {};
    
   \node at (\x+.1,-.25) {$x_0$};

    \node at (.5*\x,.6*\Qyiii) {$\vdots$};
    \coordinate (ex) at ($(0)!.5!(v32)$); 
    \path (ex)--++(0,-.5*\l) node {$\cdots$};
    \node at (1.5*\x,.6*\Qyiii) {$\vdots$};

    \node at (-1.4*\x,-\l-.35) {$\gamma^{(1)}_1$};
    \node at (-.35*\x,-\l-.35) {$\gamma^{(1)}_4$};

\end{tikzpicture}

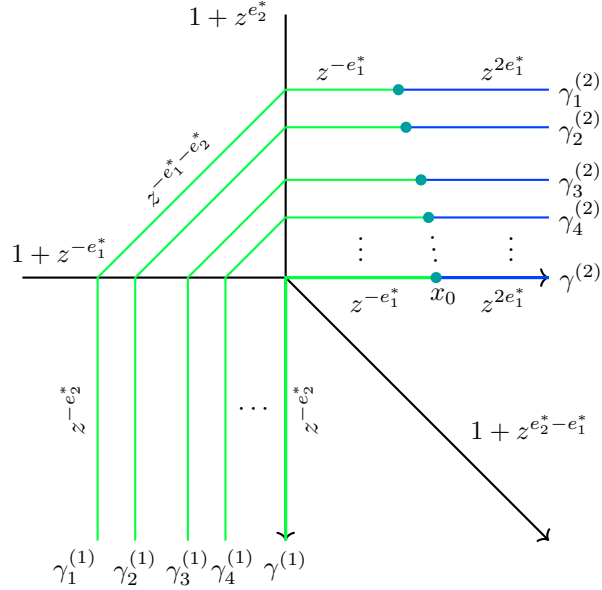
\captionof{figure}{\label{fig:A2Balanced} \red{In this figure, $\lrp{\gamma^{(1)}, \gamma^{(2)}}$ and each $\lrp{\gamma^{(1)}_k, \gamma^{(2)}_k}$ are based pairs of broken lines in the $A_2$ scattering diagram. 
The pair $\lrp{\gamma^{(1)}, \gamma^{(2)}}$ is balanced at $x_0= e_1^*$, and the generic pairs $\lrp{\gamma^{(1)}_k, \gamma^{(2)}_k}$ are balanced near $x_0$.}} 

\end{center}
\end{minipage}
\end{center}

The point of \thref{def:balanced} is that such pairs are used to compute the structure constant $\alpha\lrp{I(\gamma^{(1)}),I(\gamma^{(2)}),x_0}$.
In \cite{GHKK}, the description is very slightly different.  
They do not deal with sequences of broken lines, but require their broken lines to be generic, with endpoints in the complement of $\Supp{\scat}$.
In this setting, they take $x_0$ to be very close to $F(\gamma^{(1)}) + F(\gamma^{(2)}) $, rather than equal to this sum.
To recover the \cite{GHKK} description from \thref{def:balanced}, simply take a \red{based} pair $\lrp{\gamma_k^{(1)},\gamma_k^{(2)}}$ with $k$ sufficiently large.

\subsection{Broken line convexity} \label{sec:brokenconvex}

\begin{definition}\thlabel{def:segment}
Given a broken line $\gamma:(-\infty,0]\to M^\circ_{\R}$ (see \thref{abuse}),
a {\it{segment}} of $\gamma$ is the restriction of $\gamma$ to some closed interval $\lrb{t_0,t_1} \subset (-\infty,0]$.
We will often reparametrize this segment as 
$\tilde{\gamma}:\lrb{0,t_1-t_0} \to M^\circ_{\R}$, where $\tilde{\gamma}(t):= \gamma(t_0+t)$. 
In this case we say that $\tilde{\gamma}$ connects $ \tilde{\gamma}(0)$ and $\tilde{\gamma}(t_1-t_0)$ and refer to these as the endpoints of $\tilde{\gamma}$.
We include the degenerate case in which the segment is a point (that is $t_0=t_1$). 
Clearly, we can also talk about the support $\Supp(\tilde{\gamma})$ and the support with exponent vectors $\Par(\tilde{\gamma})$ of a broken line segment $\tilde{\gamma}$.
\end{definition}

\begin{definition}
\thlabel{well_defined_bend} Let $ \gamma: [0, T] \to M^{\circ}_{\R}\setminus \Sing(\scat)$ be a piecewise linear path with a finite number of domains of linearity. 
Suppose that there is a time $t\in (0,T)$ at which $\gamma$ is not linear and ${ \gamma (t)\in \bigcap_{j\in \mathfrak{J}}\wall_j}$ for some collection of walls $\lrp{\wall_j,f_{\wall_j}}_{j\in \mathfrak{J}}$ of $\scat$ indexed by a set $\mathfrak{J}$ that could be finite or infinite.
Note that by assumption ${ \dim\lrp{\wall_{j_1} \cap \wall_{j_2} } = \rank(M^\circ)-1}$ for any $j_1, j_2 \in \mathfrak{J}$, and there is some primitive $n \in \Nuf$ such that $\bigcup_{j \in \mathfrak{J}} \wall_j \subset n^\perp$.
Possibly after replacing $\scat$ with an equivalent scattering diagram, all associated scattering functions $f_{\wall_j}$ are of the form $\lrp{1+z^{a_j p^*(n)}}^{c_j}$ for some positive integers $a_j$ and $c_j$.
Let $F_{\mathfrak{J}} = \prod_{j\in \mathfrak{J}} f_{\wall_j}$, which is potentially a series in $\widehat{C[\widetilde{P}]}$.
Suppose that $ \gamma $ passes from the domain of linearity $L$ to the domain of linearity $L'$ (that is, $L$ corresponds to certain domain ending at $t$ and $L'$ to a domain beginning at $t$). Denote by $\mono_L$ (resp. $\mono_{L'}$) the exponent vector of $\gamma$ in the domain of linearity $L$ (resp. $L'$).
We say that the bending at time $t$ is \emph{allowed} if 
\[
\mono_L = \mono_{L'} + m,
\]
for some $m$ an exponent vector of a non-zero summand of $F_{\mathfrak{J}}^{\lra{n,\mono_L}}$.
This condition codifies the possible exponent vectors that a broken line might have after bending (see \thref{def:genbroken}).
\end{definition}

\thref{lem:reverse} below ensures that broken line segments can be traversed in both directions. In other words, allowed bendings are independent of the direction in which we travel through $\gamma$. Its proof is straightforward and follows from the definitions.

\begin{lemma}
\thlabel{lem:reverse}
If $\tilde{\gamma}:[0,T]\to M^{\circ}_{\R}$ is a broken line segment then $\overline{\tilde{\gamma}}:[0,T]\to M^{\circ}_{\R} $ given by ${\overline{\tilde{\gamma}}(t)= \tilde{\gamma}(T-t)}$ is a broken line segment.
\end{lemma}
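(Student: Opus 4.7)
The plan is to show that the natural candidate $\overline{\tilde{\gamma}}$-- obtained by reversing the order of the domains of linearity of $\tilde{\gamma}$ and negating the exponent vector on each one-- satisfies the axioms of a broken line segment. If $\tilde{\gamma}$ carries exponent vectors $m_s, m_{s-1}, \ldots, m_0$ on its successive domains, I would decorate $\overline{\tilde{\gamma}}$ with $-m_0, -m_1, \ldots, -m_s$ on the same geometric domains traversed in the opposite direction. This sign flip is forced by the convention $\gamma'(t) = -m_L$, since $\overline{\tilde{\gamma}}'(t) = -\tilde{\gamma}'(T-t) = m_L$ means the new exponent must be $-m_L$.

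The main step will be to check that every reversed bend is allowed in the sense of \thref{well_defined_bend}. At a bending point where $\tilde{\gamma}$ passes from $L$ to $L'$ across walls $\{\wall_j\}_{j\in\mathfrak{J}} \subset n^\perp$ with combined scattering function $F_{\mathfrak{J}}$, I would invoke the fact that the forward and reverse wall-crossing automorphisms are mutually inverse: traversing in the opposite direction flips the sign $\epsilon$ attached to the exponent of $F_{\mathfrak{J}}$, so the reversed automorphism is precisely $\frakp^{-1}$. From this one deduces that if $z^{m_{L'}}$ appears as a non-zero term in $\frakp(z^{m_L})$, then $z^{-m_L}$ appears as a non-zero term in $\frakp^{-1}(z^{-m_{L'}})$, with the same coefficient. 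The underlying algebraic fact is the identity $\langle n', m_L\rangle = \langle n', m_{L'}\rangle$, where $n'$ is the primitive generator of $\R_{\geq 0} n \cap \widetilde{N}^\circ$; this follows from $m_L - m_{L'} \in \Z \cdot p^*(n')$ together with the skew-symmetry $\sk{n'}{n'}=0$. Consequently the wall-crossing series applied forward from $L$ to $L'$ coincides with the one applied in reverse from $L'$ to $L$, and the forward bend condition translates directly into the reverse bend condition.

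For non-generic $\tilde{\gamma}$, I would apply the above construction to each member of the defining generic approximating sequence $(\tilde{\gamma}_k)$, producing a sequence of reversed generic segments which visibly converges to $\overline{\tilde{\gamma}}$ and carries the same collection of bending walls and decorating monomials; this meets the requirements of \thref{def:broken}. To match the letter of \thref{def:segment}, the last step is to extend $\overline{\tilde{\gamma}}$ to a broken line on $(-\infty, 0]$ by shifting the parametrization so $\overline{\tilde{\gamma}}$ occupies $[-T, 0]$ and prepending an unbounded linear ray in the past direction with exponent vector equal to that of the leftmost bounded domain of $\overline{\tilde{\gamma}}$; this introduces no new bend. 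The entire argument is essentially bookkeeping, with the only non-trivial observation being that $\sk{n'}{n'}=0$ forces the scattering function series to be invariant under the forward/reverse symmetry.
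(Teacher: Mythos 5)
Your proof is correct and supplies exactly the routine verification that the paper asserts is ``straightforward and follows from the definitions.'' The crucial point, which you identify correctly, is the invariance $\lra{n',\mono_L}=\lra{n',\mono_{L'}}$ across a bending wall in $n^\perp$: since $\mono_L - \mono_{L'}$ is a $\Z$-multiple of $p^*(n)$ (you write $p^*(n')$, a harmless imprecision as $n$ and $n'$ are parallel) and $\lra{n',p^*(n)}=\sk{n}{n'}=0$, the forward wall-crossing series $F_{\mathfrak{J}}^{\pm\lra{n',\mono_L}}$ and the reverse one are identical, and the coefficient bookkeeping goes through with the reversed automorphism being $\frakp^{-1}$. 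The treatment of the non-generic case via the approximating sequence and the final step of extending $\overline{\tilde{\gamma}}$ by a linear ray to literally exhibit it as a restriction of a broken line (per \thref{def:segment}) are both appropriate; the only point left implicit is that in the non-generic case the prepended ray might need a small perturbation in the approximating sequence to avoid $\Sing(\scat)$, but this is the same kind of perturbation already used elsewhere in the paper and does not affect the argument.
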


Now that we have carefully stated what we mean by a broken line segment, we state once again \thref{def:blc_intro} from the introduction.
\begin{definition}\thlabel{def:blc}
A closed subset $S$ of $V^\trop(\R)$ is {\it{broken line convex}} 
if for every pair of rational points $s_1$, $s_2$ in $S$,
every segment of a broken line with endpoints $s_1$ and $s_2$ is entirely contained in $S$.  \end{definition}

\begin{remark}\thlabel{rem:Qpoints}
As alluded to in the introduction, the reader may find it odd that we ask for $s_1$ and $s_2$ to be rational points.
It is in fact very natural after consideration.
First, we are interested in when this set $S$ and its dilations define an algebra, and the irrational points are extraneous to this question.
In essence, from an algebraic perspective, we would really like to consider subsets of $V^\trop(\Q)$ instead of $V^\trop(\R)$.
The established convention however is to work over $\R$.
This alleviates potential issues in defining generic broken lines when endpoints lie in a region with dense walls-- the support of every wall is a {\emph{rational}} polyhedral cone-- and it may additionally be more natural from a symplectic or differential (as opposed to algebraic) viewpoint.
Next, broken lines are decorated with Laurent monomials in each domain of linearity, where the exponent vector of this monomial is the negative of the velocity vector along the line.
Wall crossing governs the allowed behavior of this broken line at the interface of domains of linearity, and this only applies to integral exponent vectors.
Obviously, a line segment connecting a generic pair of irrational points in a real vector space will have irrational slope, so we have no way of talking about broken line segments connecting arbitrary irrational points-- the notion simply does not make sense. 
\end{remark}

\begin{remark}
After discussing this project with Travis Mandel, we learned that he investigated a notion very similar to what we call broken line convexity in the setting of log Calabi-Yau surfaces in his thesis \cite{Travis}.
He relates this to a different convexity notion called {\it{strong convexity}}.  See \cite[Theorem~5.11]{Travis}.
Additional tropical convexity notions (such as {\it{min-convexity}}) are explored in connection to positivity in \cite[Section~8]{GHKK}.
Their convexity notions are given in terms of functions on $V^\trop(\R)$.
\end{remark}

The following lemmas relate broken line convex sets associated to $ \cA$ and $ \cX$ with those of $\cAp$, \change{ and the remark following the lemmas relates broken line convex sets associated to $\cA/T_{H^\circ}$ to those of $\cA$ and broken line convex sets associated to $\cX_\phi$ to those of $\cX$.} The proofs follows directly from the definitions.

\begin{lemma}
\thlabel{lem:projection_and_convexity}
Let $ \pi:\widetilde{M}^{\circ}_{\R} \to M^{\circ}_{\R}$ be the natural projection.  If $S\subset \widetilde{M}^{\circ}_{\R}$ is broken line convex, then $\pi(S)$ is broken line convex in $M^{\circ}_{\R}$. Similarly, if  $S\subset M^{\circ}_{\R}$ is broken line convex, then $\pi^{-1}(S)$ is broken line convex in $ \widetilde{M}^{\circ}_{\R}$.
\end{lemma}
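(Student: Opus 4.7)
My plan is to push broken line segments through the projection $\pi$ in one direction and lift them against $\pi$ in the other, using \cite[Construction~7.11]{GHKK}: broken lines in $M^\circ_\R$ are precisely $\pi$-images of broken lines in $\widetilde{M}^\circ_\R$, with decoration monomials transported via $d\pi$. As a consequence, broken line segments descend along $\pi$, and every broken line segment in $M^\circ_\R$ admits a lift to a broken line segment in $\widetilde{M}^\circ_\R$. This correspondence is the only ingredient beyond the definitions themselves.

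For the second assertion, I would start with the easy direction. Assume $S\subset M^\circ_\R$ is broken line convex, and let $\tilde s_1,\tilde s_2$ be rational points of $\pi^{-1}(S)$. For any broken line segment $\hat\gamma$ in $\widetilde{M}^\circ_\R$ connecting them, the composition $\pi\circ\hat\gamma$ is a broken line segment in $M^\circ_\R$ whose endpoints $\pi(\tilde s_1),\pi(\tilde s_2)$ are rational points in $S$. Broken line convexity of $S$ forces $\pi\circ\hat\gamma\subseteq S$, so $\hat\gamma\subseteq \pi^{-1}(S)$.

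For the first assertion, assume $S\subset \widetilde{M}^\circ_\R$ is broken line convex. Given rational $s_1,s_2\in\pi(S)$ together with a broken line segment $\tilde\gamma:[0,T]\to M^\circ_\R$ joining them, I would fix rational lifts $\tilde s_1,\tilde s_2\in S$ of $s_1,s_2$ and, using \cite[Construction~7.11]{GHKK}, produce a broken line segment $\hat\gamma$ in $\widetilde{M}^\circ_\R$ that projects to $\tilde\gamma$ and has endpoints $\tilde s_1,\tilde s_2$. Broken line convexity of $S$ then places $\hat\gamma$ inside $S$, and projecting yields $\tilde\gamma=\pi\circ\hat\gamma\subseteq \pi(S)$.

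The main obstacle is arranging, in the first assertion, that a single lift $\hat\gamma$ of $\tilde\gamma$ has both endpoints in $S$. This is handled by exploiting the linearity of $\pi$, the rationality of the kernel of $d\pi$, and the freedom provided by \cite[Construction~7.11]{GHKK} to choose lifts of individual slopes; together these allow one to align the lifted endpoints with prescribed rational points in $\pi^{-1}(s_i)\cap S$. Once this is established the two implications are direct consequences of the definitions, which is consistent with the authors' remark preceding the lemma.
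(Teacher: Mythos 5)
Your handling of the second assertion is correct and is exactly the one\mbox{-}line argument the paper has in mind when it says the proof ``follows directly from the definitions'': the projection of a broken line segment in $\widetilde{M}^{\circ}_{\R}$ is by definition a broken line segment in $M^{\circ}_{\R}$, its endpoints $\pi(\tilde s_1),\pi(\tilde s_2)$ are rational points of $S$ because $\pi$ is defined over $\Z$, and broken line convexity of $S$ then pulls back to $\pi^{-1}(S)$.

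The first assertion is where you have a genuine gap, and you have located it precisely but not closed it: the claim that one can ``align the lifted endpoints with prescribed rational points in $\pi^{-1}(s_i)\cap S$'' is false in general. Fix a lift $\hat\gamma$ of $\tilde\gamma$ with $\hat\gamma(0)=\tilde s_1$. Every exponent vector of $\hat\gamma$ must be an \emph{integral} lift $(\mono_i,n_i)\in M^\circ\oplus N=\widetilde{M}^\circ$ of the corresponding exponent vector $\mono_i$ of $\tilde\gamma$, and the durations $\Delta t_i$ of the domains of linearity are dictated by $\tilde\gamma$; hence $\hat\gamma(T)=\hat\gamma(0)-\sum_i \Delta t_i\,(\mono_i,n_i)$, and the terminal points reachable from a fixed $\tilde s_1$ form a countable, discrete subset of the fiber $\pi^{-1}(s_2)\cong N_\R$ (a union of cosets of the lattices $\Delta t_i N$, further cut down by the wall\mbox{-}crossing constraints relating $n_{i-1}$ to $n_i$). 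Since the fibers of $\pi$ are positive\mbox{-}dimensional, a prescribed $\tilde s_2\in\pi^{-1}(s_2)\cap S$ will in general miss this set; worse, $\pi^{-1}(s_i)\cap S$ need not contain any rational point at all, even when $s_i$ is rational, because $S$ is merely closed. Neither the linearity of $\pi$ nor the rationality of $\ker d\pi$ rescues this: translating a broken line by an element of $N_\R$ does not produce a broken line, since the outgoing walls of $\scat^{\cAp}_{\seed}$ are cones inside $(n,0)^\perp$ and are not invariant under $N_\R$\mbox{-}translation (only the initial walls are). The honest way to get the first assertion in the form the paper actually uses it (in the proof of \thref{MainThm}) is to apply it only to saturated sets $S=\pi^{-1}(S_0)$: there \emph{any} lift of $\tilde\gamma$ — one exists by the very definition of broken lines in $M^{\circ}_{\R}$ as projections, and it should be chosen with rational endpoints, which is possible because all slopes are integral and the breaking times of $\tilde\gamma$ are rational — automatically has both endpoints in $\pi^{-1}(\{s_1,s_2\})\subset S$, and no alignment is needed. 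For an arbitrary closed $S$ your step would fail, so you should either restrict to the saturated case or supply an argument that does not require prescribing both endpoints of the lift.
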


\begin{lemma}
\thlabel{lem:slice_and_convexity}
The subset $(\cX^{\vee})^{\trop}(\R)\subset (\cAp^{\vee})^{\trop}(\R)$ is broken line convex. In particular, if $S\subset (\cAp^{\vee})^{\trop}(\R)$ is broken line convex then $S\cap (\cX^{\vee})^{\trop}(\R)$ is broken line convex. Moreover, if $S \subset (\cX^{\vee})^{\trop}(\R)$ is broken line convex in $(\cX^{\vee})^{\trop}(\R)$ then it is also broken line convex in $(\cAp^{\vee})^{\trop}(\R)$.
\end{lemma}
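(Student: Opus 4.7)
The approach is to isolate a single structural fact from Section~\ref{sec:Xscat} and then read off the three assertions as corollaries. The fact in question is that the $T_{N^\circ}$-weight of the slope of a broken line in $\scat^{\cAp}_\seed$ is preserved under every wall crossing. Indeed, each scattering function of $\scat^{\cAp}_\seed$ is a series in monomials $z^{k\wpp_1^*(n)}$ with $n \in \widetilde{N}^+$, and $\psi^*\lrp{\wpp_1^*(n)} = 0$ since $\wpp_1^*(n) = (p^*(n),n)$; because a wall crossing modifies the decorating exponent by adding an integer multiple of such a $\wpp_1^*(n)$, the weight $w(\gamma) := \psi^*(m_L)$ is independent of the domain of linearity $L$ of $\gamma$.

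To prove the first assertion, let $\tilde\gamma:[0,T]\to \widetilde{M}^\circ_\R$ be a generic broken line segment whose endpoints $s_1 = \tilde\gamma(0)$ and $s_2 = \tilde\gamma(T)$ are rational points of $(\cX^\vee)^\trop(\R)$. On each domain of linearity $L$, $\tilde\gamma'(t) = -m_L$ and every $m_L$ has the same weight $w(\tilde\gamma)$, so integrating gives
\[
\psi^*(s_2) - \psi^*(s_1) = -T\cdot w(\tilde\gamma).
\]
Since the left-hand side vanishes, either $T = 0$ (the segment is a point) or $w(\tilde\gamma) = 0$; in both cases $\psi^*\circ\tilde\gamma \equiv 0$, so $\tilde\gamma$ is entirely contained in the weight-zero slice $(\cX^\vee)^\trop(\R)$. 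For non-generic segments, take the approximating sequence of generic segments from \thref{def:broken}, apply the same argument (the slopes of the approximating segments are the same as those recorded in $\Par(\tilde\gamma)$, so their weights are equal), and pass to the limit using continuity of $\psi^*$ together with closedness of the weight-zero slice.

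The remaining two assertions are immediate consequences. For the \emph{in particular} clause, if $S \subset (\cAp^\vee)^\trop(\R)$ is broken line convex and $s_1,s_2$ are rational points of $S\cap (\cX^\vee)^\trop(\R)$, every broken line segment between them in $(\cX^\vee)^\trop(\R)$ is, via the slicing construction of $\scat^{\cX}_\seed$, also a broken line segment in $(\cAp^\vee)^\trop(\R)$, hence contained in $S$; being in the slice already, it lies in $S\cap(\cX^\vee)^\trop(\R)$. For the \emph{moreover} clause, if $S \subset (\cX^\vee)^\trop(\R)$ is broken line convex and $\tilde\gamma$ is any broken line segment in $(\cAp^\vee)^\trop(\R)$ with rational endpoints in $S$, the first assertion places $\tilde\gamma$ inside $(\cX^\vee)^\trop(\R)$, so $\tilde\gamma$ is a broken line segment of $\scat^{\cX}_\seed$ between rational points of $S$ and is therefore contained in $S$ by hypothesis. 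The only delicate point in the whole plan is the limiting argument for non-generic segments, but this is purely a continuity/closedness exercise using the machinery already set up in \thref{def:broken}.
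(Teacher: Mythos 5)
Your proof is correct and fills in precisely what the paper leaves implicit: the paper dispatches this lemma with ``the proofs follow directly from the definitions,'' and the single structural fact you isolate---that wall-crossing preserves $T_{N^\circ}$-weight because every scattering exponent lies in $\ker \psi^*$---is exactly the observation already recorded in Section~\ref{sec:Xscat}. Your derivation $\psi^*(s_2)-\psi^*(s_1)=-T\cdot w(\tilde\gamma)$ and the ensuing dichotomy ($T=0$ or $w=0$) is the clean way to upgrade the paper's remark about broken lines with weight-zero initial monomial to the statement needed here about segments with both endpoints in the slice, and your treatment of the two remaining assertions (identifying $\scat^{\cX}_\seed$-segments with slice-confined $\scat^{\cAp}_\seed$-segments in both directions) is likewise the natural reading of the definitions.
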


\change{
\begin{remark}\thlabel{rem:BLCRedDim}
The same reasoning applies for the quotients of $\cA$ and fibers of $\cX$ of Section~\ref{sec:RedDim}.
In detail,
for $\cX_\phi$, let $\pi$ be the quotient map $(\cX^\vee)^\trop(\R) \to (\cX^\vee_\phi)^\trop(\R)$.
If $S\subset (\cX^\vee)^\trop(\R)$ is broken line convex, then $\pi(S)$ is broken line convex in $(\cX^\vee_\phi)^\trop(\R)$.
If $S\subset (\cX^\vee_\phi)^\trop(\R)$ is broken line convex, then $\pi^{-1}(S)$ is broken line convex in  $(\cX^\vee)^\trop(\R)$.
Next, for $\cA_{\vb{t}}/T_{H^\circ}$, the subset $\big((\cA_{\vb{t}}/T_{H^\circ})^\vee\big)^\trop(\R) \subset (\cA^\vee_{\vb{t}})^\trop(\R)$ is broken line convex.
If the subset ${S\subset (\cA^{\vee}_{\vb{t}})^{\trop}(\R)}$ is broken line convex then so is $S\cap \big((\cA_{\vb{t}}/T_{H^\circ})^\vee\big)^\trop(\R)$. 
Moreover, if $S$ is broken line convex in $\big((\cA_{\vb{t}}/T_{H^\circ})^\vee\big)^\trop(\R)$, then it is also broken line convex as a subset of $(\cA^\vee_{\vb{t}})^\trop(\R)$.
\end{remark}
}
\section{Constructing a broken line segment from balanced broken lines}\label{sec:key_lemma}

In this section we introduce a procedure to construct a broken line segment from a pair of balanced broken lines. 
To be more precise let
($\gamma^{(1)}$,$\gamma^{(2)}$) be a balanced pair of broken lines and $(a,b)$ a pair of positive integers. In \thref{prop:bl2jp} we construct a broken line segment that connects the points $\frac{I(\gamma^{(1)})}{a}$ and $\frac{I(\gamma^{(2)})}{b}$ and bends precisely on the walls in which $\gamma^{(1)}$ and $\gamma^{(2)}$ bend. 
The construction of this broken line segment is motivated by {\it{jagged paths}}, described, for example, in \cite[Section~3]{GSTheta}.

As stated at the beginning of Section~\ref{sec:blc},
we take $\scat$ to be the scattering diagram of a cluster variety of type $\cA$ for which the injectivity assumption is satisfied, {\it{e.g.}} any $\cAp$ cluster variety.

\begin{remark}
\thlabel{lin_indep}
If $\gamma$ is \red{a} generic \red{broken line with bending points $x_1, \dots , x_s$,} then for each $1\leq i \leq s$ the vectors $x_i$ and $\mono_i$ are linearly independent. This is clear since $x_i\in n_{0,i}^{\perp}$ and $\mono_i\notin n_{0,i}^{\perp}$. 
\end{remark}

\subsection{An algorithmic description of the support} \label{sec:algosupport}

In this subsection we describe an algorithm that will allow us to obtain the support of the broken line segment we aim to construct. 
The following result is used repeatedly in the algorithm.

{\sloppy{
\begin{lemma}
\thlabel{slopes_full_gen}
Let $\gamma$ be a generic broken line with $s\geq 1$ bends. Let \red{$\lambda_1 , \lambda_2\in \R_{>0}$ and ${i \in \{0, \dots , s-1 \}}$.} 
Then the line segment connecting $\lambda_1 x_i$ and $\lambda_2\mono_{i} $ intersects the ray $ \ray_{i+1}\red{=\mathbb{R}_{\geq 0} x_{i+1}}$ in a unique point $x$. \red{Moreover, if $\lambda_1 , \lambda_2\in \Q_{>0}$ and $x_i\in M^{\circ}_{\Q}$ then $x \in M^{\circ}_{\Q}$.} 
\end{lemma}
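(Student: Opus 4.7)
The strategy is to work in the $2$-plane spanned by $x_i$ and $\mono_i$, which by \thref{lin_indep} is genuinely two-dimensional. I will first show that $x_{i+1}$ lives in this plane, set up coordinates, compute the intersection point explicitly, and finally track rationality.

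\textbf{Step 1: Locating $x_{i+1}$ on the line through $x_i$ in direction $\mono_i$.}
The bounded domain of linearity $L_i$ has endpoints $x_i$ (at time $t_i$) and $x_{i+1}$ (at time $t_{i+1}<t_i$), and on $L_i$ one has $\gamma'(t)=-\mono_i$. Integrating from $t_i$ backwards to $t_{i+1}$ yields
\[
x_{i+1}=x_i+c_i\mono_i,\qquad c_i:=t_i-t_{i+1}>0.
\]
In particular $x_{i+1}$ lies in $\sspan(x_i,\mono_i)$, and by \thref{lin_indep} the pair $(x_i,\mono_i)$ is a basis of this plane.

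\textbf{Step 2: The intersection computation.}
Using $(x_i,\mono_i)$ as a basis, write points of the plane as pairs $(a,b)$. The segment connecting $\lambda_1 x_i$ and $\lambda_2\mono_i$ is $\{(\lambda_1(1-t),\lambda_2 t):t\in[0,1]\}$, i.e.\ the segment from $(\lambda_1,0)$ to $(0,\lambda_2)$, contained in the line $a/\lambda_1+b/\lambda_2=1$. The ray $\ray_{i+1}=\mathbb{R}_{\geq 0}x_{i+1}$ is $\{(\mu,\mu c_i):\mu\geq 0\}$. Substituting gives
\[
\frac{\mu}{\lambda_1}+\frac{\mu c_i}{\lambda_2}=1\iff \mu=\frac{\lambda_1\lambda_2}{\lambda_2+c_i\lambda_1},
\]
which has a unique solution, and since $\lambda_1,\lambda_2,c_i>0$ we also have $\mu>0$ and the corresponding $t\in(0,1)$. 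Thus the segment meets $\ray_{i+1}$ in the unique point $x=\mu x_{i+1}$.

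\textbf{Step 3: Rationality.}
Assume now $\lambda_1,\lambda_2\in\mathbb{Q}_{>0}$ and $x_i\in M^\circ_{\Q}$. Pick any wall $\wall_{(i+1)_j}$ bending $\gamma$ at $x_{i+1}$ and let $n\in\Nuf$ be its primitive normal, so that $\wall_{(i+1)_j}\subset n^\perp$. Because $\gamma$ crosses this wall transversely in the domain $L_i$ where it moves with velocity $-\mono_i$, we have $\langle n,\mono_i\rangle\neq 0$. The condition $x_{i+1}\in n^\perp$ combined with Step~1 yields
\[
0=\langle n,x_{i+1}\rangle=\langle n,x_i\rangle+c_i\langle n,\mono_i\rangle,\qquad c_i=-\frac{\langle n,x_i\rangle}{\langle n,\mono_i\rangle}\in\mathbb{Q}.
\]
Hence $\mu\in\mathbb{Q}_{>0}$, and
\[
x=\mu x_{i+1}=\mu x_i+\mu c_i\mono_i\in M^\circ_{\Q},
\]
since $x_i\in M^\circ_{\Q}$ and $\mono_i\in M^\circ$.

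The only genuine subtlety is verifying $\langle n,\mono_i\rangle\neq 0$ in Step~3, which I expect to follow from the transversality built into \thref{def:genbroken} (a broken line can only bend at a wall it actually crosses, ruling out exponent vectors tangent to the wall). Everything else is elementary linear algebra in the $2$-plane $\sspan(x_i,\mono_i)$.
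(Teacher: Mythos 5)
Your proof is correct and mirrors the paper's own: both reduce to the $2$-plane through $0$, $x_i$, $x_{i+1}$, use \thref{lin_indep} for linear independence, observe $x_{i+1}=x_i+c_i\mono_i$ with $c_i>0$ so that $\mono_i$ points from $x_i$ towards $\ray_{i+1}$, and extract rationality from $\wall_{i+1}\subset n^{\perp}$ with $n\in N$; you simply make the intersection explicit in coordinates where the paper argues qualitatively via ``pointing'' vectors and Figure~\ref{fig:slopes_full_gen}. The one caveat---present in the paper's proof as well---is that \thref{lin_indep} as stated treats $1\le i\le s$, so the $i=0$ case (which the lemma allows) needs the extra observation that $x_0$ and $\mono_0$ are independent: otherwise $x_1=x_0+c_0\mono_0$ would be a nonzero multiple of $\mono_0$ and hence could not lie in $n_{0,1}^{\perp}$, contradicting that $x_1$ is a bending point.
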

}}

\begin{proof}
Observe that for all $i\in \{0, \dots ,s-1\}$ the points $x_{i+1}$, $x_{i}$ and $0$ are not colinear. 
We consider the plane $\Pi_i $ containing $x_i, x_{i+1}$ and the origin $0$. 
Clearly, both $\ray_{i}$ and $\ray_{i+1}$ are contained in $\Pi_i$, and $\mono_i$ is also a point of $\Pi_i$. 
We have to prove a statement of Euclidean geometry in the plane $\Pi_i$. First recall that $\mono_i$ is the negative of the velocity of $\gamma $ in the domain of linearity $L_i$. Therefore, the vector $\overrightarrow{\mono_i}$ based at $x_i$ with direction $ \mono_i$ lives in $\Pi_i$ and is directed from $x_i$ to the ray $\ray_{i+1}$. 
 Since $\lambda_2>0$ we have that the vector $\lambda_2\overrightarrow{\mono_i}$ based at $x_i$ with direction $\lambda_2\mono_i$ is also directed from $x_i$ to $\ray_{i+1}$ (see Figure \ref{fig:slopes_full_gen}). Observe now that the vector $-\lambda_1\overrightarrow{x_i}$ based at $x_i$ with direction $-\lambda_1 x_i$ is directed from $x_i$ to the origin. 
 Since the origin belongs to $\ray_{i+1}$, we have that the vector $\overrightarrow{v}$  based at $x_i$ with direction $\lambda_2 \mono_i-\lambda_1 x_i$ points from $x_i$ to $\ray_{i+1}$. 
 Finally, since $\lambda_1 >0 $ we must have that the line passing through
 $
 \lambda_1x_1$ and $\lambda_2\mono_i$ is parallel to $\overrightarrow{v}$ and must intersect $ \ray_{i+1} $ in a unique point which we call $x$. 
 Now assume that \red{$\lambda_1 , \lambda_2\in \Q_{>0}$ and} $x_i\in M^{\circ}_{\Q} $. 
 Observe that the conditions $\mono_i\in M^{\circ}$ and $ \wall_i\subset n_{0,i}^{\perp}$ for some $n_{0,i}\in N$ imply that $x$ is in $ M^{\circ}_{\Q}$. Since both $x $ and $ x_{i+1}$ are in the ray $\ray_{i+1}$ then  $x=\beta x_{i+1}$ for some $\beta \in \Q_{>0}$.
\end{proof}

\begin{figure}[!htbp]
    \centering
\begin{tikzpicture}[scale=1]
\draw[dashed,line width=0.2mm, orange] (-1.8,3.6)-- (-3.8,3.5);
\draw[dashed,line width=0.2mm, orange] (1.6,3.2)-- (3.5,3.2);
\draw[fill,rounded corners, green,opacity=.19] (3.5,4.4) -- (3.5,-0.5) -- (-3.8,-0.5) -- (-3.8,4.4);
\draw[-,line width=0.19mm] (0,0) -- (-2.2,4.4);
\draw[-,line width=0.19mm] (0,0) -- (2.2,4.4);
\draw[-,line width=0.3mm, orange] (1.6,3.2) -- (-1.8,3.6);
\draw[-,line width=0.3mm, magenta] (0.7,1.4) -- (-0.42,0.84);
\draw[->, black, line width=0.5mm] (1.6,3.2) -- (0.9,1.8);
\draw[->, orange, line width=0.5mm] (1.6,3.2) -- (-.1,3.4);
\draw[densely dotted, ->, magenta, line width=0.3mm] (1.6,3.2) -- (-0.8,2);
\draw[-, magenta, line width=0.3mm] (-1.7,0.2) --(-0.42,0.84);
\draw[loosely dotted, orange, line width=0.5mm] (0,0) -- (-1.7,0.2);

\node at (2.3,4.1) {$\ray_i $};
\node at (-2.5,4.1) {$\ray_{i+1} $};
\node at (-3.2,0) {\huge $\Pi_i$};
\node at (1.8,2.2) {$ - \lambda_1 \overrightarrow{x_i}$};
\node at (0.9,3.6) {$ \lambda_2\overrightarrow{\mono_i} $};
\node at (0, 3) {$L_i$};
\node at (0.5, 2.4) {$\overrightarrow{v}$};
\node at (-3.5, 3.2) {$ \gamma$};
\node at (-1.7,0.6) {$\lambda_2 \mono_i$};

\node at (-1.7,0.2) {$\bullet$};
\node at (1.6,3.2) {$\bullet$};
\node at (1.8,3) {$x_i$};
\node at (0.7,1.4) {$\bullet$};
\node at (1,1.15) {$\lambda_1 x_i$};
\node at (-1.8,3.6) {$\bullet$};
\node at (-2.05,3.35) {$x_{i+1}$};
\node at (1.6,3.2) {$\bullet$};
\node at (-0.42,0.84) {$\bullet$};
\node at (-0.52,0.54) {$x$};
\node at (0,0) {$\bullet$};
\node at (0,-0.3) {$0$};

\end{tikzpicture}
\caption{The point $x$ in $\ray_{i+1} $ determined by $\lambda_1 x_i$ and $\lambda_2 \mono_i$.}
\label{fig:slopes_full_gen}
\end{figure}

\begin{algo} 
\thlabel{algorithm_1}
The {\bf input} of the algorithm is the following:
\begin{enumerate}
    \item a pair of positive integers $(a,b)$;
\item $ \Par(\gamma)$ of a broken line $\gamma$. 
\end{enumerate}
The {\bf output} is the support $\Supp(\tilde{\gamma})$ of a piecewise linear segment $\tilde{\gamma}$ in $M^{\circ}_{\R}$ with the properties:
\begin{enumerate}
    \item the endpoints of $\Supp(\tilde{\gamma})$ are $\dfrac{x_0}{a+b}$ and $\dfrac{\mono_s}{a}$;
    \item the piecewise linear segment $\Supp(\tilde{\gamma})$ bends at a ray $\ray$ if and only if $\gamma$ bends at $\ray$.
\end{enumerate} 

{\bf The algorithm.} Let $\Par(\gamma)=((L_s,\mono_s), \dots , (L_0,\mono_0))$ and set $\tilde{x}_0:=\dfrac{x_0}{a+b}$. First assume that $\gamma$ is generic. We have two cases $ \tilde{x}_0 = \dfrac{\mono_0}{a}$ and $ \tilde{x}_0 \neq \dfrac{\mono_0}{a}$.\\
\textbf{Case 1}: If $ \tilde{x}_0 = \dfrac{\mono_0}{a}$ then $\gamma$ cannot bend (that is $s=0$). Define
\[
\Supp(\tilde{\gamma})= \left\lbrace \tilde{x}_0\right\rbrace
\]
and stop here. By construction $\Supp(\tilde{\gamma})$ satisfies the desired properties.
\\
\textbf{Case 2}: If $\tilde{x}_0 \neq \dfrac{\mono_0}{a}$ let $l_0$ be the line segment whose endpoints are  $\dfrac{\mono_0}{a}$ and $\tilde{x}_0$ . We have two sub-cases: $s=0$ and $s>0$.

If $s=0$ we set $\tilde{L}_0 :=l_0 $. We define 
\[
\Supp(\tilde{\gamma}):= \tilde{L}_0
\]
and stop here.
Clearly, $\Supp(\tilde{\gamma}) $ satisfies the desired properties.

If $s>0$ then \thref{slopes_full_gen} ensures that $l_0$ intersects $\ray_1$ in a unique point $\tilde{x}_1=\beta_1 x_1$ for some $\beta_1\in \red{\R}_{>0}$.
In this case we define $\tilde{L}_0$ to be the line segment whose endpoints are $ \tilde{x}_0$ and $ \tilde{x}_1$. 
Now suppose that for $i \in \{ 1, \dots , s-1 \}$ we have defined a point $\tilde{x}_i \in \ray_{i}$ that lies in the line segment $l_{i-1}$ connecting $\tilde{x}_{i-1}$ and $\dfrac{\mono_{i-1}}{a}$, and a positive number $\beta_i\in \red{\R}_{>0}$ such that $ \tilde{x}_i=\beta_i x_i$.
We define $\tilde{L}_{i-1}$ to be the line segment whose endpoints are $\tilde{x}_{i-1} $ and $\tilde{x}_i$.
Let $l_{i}$ be the line segment connecting $\tilde{x}_i$ and $\dfrac{\mono_i}{a}$ (here we implicitly use \thref{lin_indep}). We define $\tilde{x}_{i+1}$ as the unique intersection point of $l_i$ and $\ray_{i+1}$ (using \thref{slopes_full_gen}). Moreover, we define $\tilde{L}_i$ to be the line segment whose endpoints are $\tilde{x}_i$ and $\tilde{x}_{i+1}$, see Figure \ref{fig:algorithm_1}. 
\red{In this way we have defined a point $ \tilde{x}_{i+1}\in \ray_{i+1}$ and a line segment $\tilde{L}_i$ for each $0\leq i < s$.}
Let $\tilde{L}_s$ be the line segment connecting $\tilde{x}_s$ and $\dfrac{\mono_s}{a}$. We define the support of $\tilde{\gamma}$ to be
\[
\text{Supp}(\tilde{\gamma}):=\bigcup_{i=0}^{s}\tilde{L}_{i}
\]
and stop here. By construction $\Supp(\tilde{\gamma}) $ satisfies the desired properties.

Now we treat the non-generic case. So assume $\gamma $ is non-generic and let $(\gamma_k)_{k\in \N}$ be the sequence of generic broken lines approaching $\gamma$ (see \thref{lin_indep}). For each $k \in \N$ we construct the support of a piecewise linear segment $\Supp (\tilde{\gamma}_k) $ applying the previous case to $ \gamma_k$. Let $x_{1;k}, \dots , x_{s;k}$ (resp. $\tilde{x}_{1;k}, \dots , \tilde{x}_{s;k}$) be the bending points of $ \gamma_k$ (resp. $\tilde{\gamma}_k$) and let $\tilde{x}_{0;k}=\dfrac{\gamma_k(0)}{a+b}$. Observe that if the sequence $(\tilde{x}_{i-1;k})_{k \in \N} $ is convergent for some $ i\in \{1, \dots, s\} $ then the sequence $(\tilde{x}_{i;k})_{k \in \N} $ is also convergent. 
To see this recall first that $\tilde{x}_{i;k}$ is the unique intersection point of the ray $\ray_{i;k}:=\R_{\geq 0}x_{i;k}$ and the line segment with endpoints $\dfrac{\mono_{i-1}}{a}$ and $\tilde{x}_{i-1;k}$. 
Next, by definition the rays $\ray_{i;k}$ converge to the ray $\ray_{i}$ and thus if the sequence $\tilde{x}_{i-1;k}$ is convergent, then the sequence $(\tilde{x}_{i;k})_{k \in \N} $ must converge to a point in $\ray_i$. Since the sequence $(\tilde{x}_{0;k})_{k\in \N}$ converges to $ \tilde{x}_{0}$ then we must have that the sequence $(\tilde{x}_{i;k})_{k \in \N} $ converges to a point, say $\tilde{x}_{i} $, for all $i \in \{ 0, \dots, s \}$. 
We define $\tilde{L}_{i-1}$ to be the line segment whose endpoints are $\tilde{x}_{i-1} $ and $\tilde{x}_i$ for $i \in \{ 1, \dots, s \}$ and let $\tilde{L}_s$ be the line segment connecting $\tilde{x}_s$ and $\dfrac{\mono_s}{a}$. We define the support of $\tilde{\gamma}$ to be
\[
\text{Supp}(\tilde{\gamma}):=\bigcup_{i=0}^{s}\tilde{L}_{i}
\]
and stop here. By construction $\Supp(\tilde{\gamma}) $ satisfies the desired properties.
\end{algo}

\begin{figure}[H]
    \centering
\begin{tikzpicture}[scale=1]
\draw[-,line width=0.3mm] (0,0) -- (0,5.75);
\draw[-,line width=0.3mm] (0,0) -- (3.8,3.8);
\draw[-,line width=0.3mm] (0,0) -- (-4,4);

\draw[fill,rounded corners,blue,opacity=.19] (-1.5,6.5) -- (3.8,3.85) -- (3.8,-1.9) -- (-1.5,0.75);

\draw[fill,rounded corners, green,opacity=.19] (1.375,6.25) -- (-4,4.295455) -- (-4,-1.454545) -- (1.375,0.5);

\node at (0,0) {\tiny $ \bullet $};
\node at (2.7,-0.5) {\huge ${\Pi_{i-1}}$};
\node at (-3.5,0.7) {\huge ${\Pi_{i}}$};
\node at (0,6) {$\ray_i $};
\node at (4,4) {$\ray_{i-1}$};
\node at (-4.1,4.15) {$\ray_{i+1} $};

\draw[-,line width=0.4mm, orange] (3,3) -- (0,4.5);
\node at (3,3) {\tiny $ \bullet $};
\node at (3.4,2.8) {$x_{i-1}$};
\node at (0,4.5) {\tiny $ \bullet $};
\node at (0.2,4.1) {$x_{i}$};

\draw[line width=0.4mm, loosely dotted, orange] (0,0) -- (-6,3);
\node at (-6,3) {\tiny $ \bullet $};
\node at (-6,3.5) {$\dfrac{\mono_{i-1}}{a}$};
\node at (-4.8, 3.1) {$ l_{i-1}$};

\draw[-,line width=0.4mm, magenta] (2,2) -- (0,2.25);
\draw[dashed] (0,2.25) -- (-4,2.75);
\draw[-] (-4,2.75) -- (-6,3);
\node at (2,2) {\tiny $ \bullet $};
\node at (2.4,1.8) {$\tilde{x}_{i-1}$};
\node at (0,2.25) {\tiny $ \bullet $};
\node at (0.2,1.9) {$\tilde{x}_{i}$};

\draw[-,line width=0.4mm, orange] (0,4.5) -- (-3.3,3.3);
\draw[line width=0.4mm, loosely dotted, orange] (0,0) -- (-2.538461,-0.923076);
\node at (-3.3, 3.3) {\tiny $ \bullet $};
\node at (-3.6, 3.1) {$ x_{i+1} $};
\node at (-2, 4.1) {$c_{L_i}z^{ \mono_i}$};
\node at (-2.7, -0.5) {$ \dfrac{\mono_i}{a}$};
\node at (-2, 0.3) {$ l_i $};

\draw[-,line width=0.4mm, magenta] (0,2.25) -- (-1,1);
\draw[-] (-1,1) -- (-2.538461,-0.923076);
\node at (-0.8, 1.8) {$ \tilde{L}_{i} $};
\node at (-1, 1) {\tiny $ \bullet $};
\node at (-1.5, 1.1) {$ \tilde{x}_{i+1} $};
\node at (-2.538461,-0.923076) {\tiny $ \bullet $};
\node at (0,-0.3) {$0$};
\node at (0,0) {\tiny $\bullet$};
\node at (0,4.5) {\tiny $\bullet$};
\node at (0,2.25) {\tiny $\bullet$};

\node at (1.9,4.2) {$c_{L_{i-1}}z^{\mono_{i-1}}$};
\node at (1.2,2.4) {$\tilde{L}_{i-1}$};

\end{tikzpicture}
\caption{Construction of the domains of linearity $\tilde{L}_{i-1}$ and $\tilde{L}_i$ following \thref{algorithm_1}.}
\label{fig:algorithm_1}
\end{figure}

\begin{remark}
 \thref{algorithm_1} does not provides a parametrization of the piecewise linear segment $\tilde{\gamma}:[0,T]\to M^{\circ}_{\R}$; it only gives its image in $M^{\circ}_{\R}$. In case $\gamma $ is non-generic, for fixed $i \in \{ 1, \dots , s \}$, the line segments with endpoints $\tilde{x}_{i-1;k}$ and $\tilde{x}_{i;k}$ are in general not parallel as $k$ varies.
\end{remark}

\begin{example}
We illustrate the algorithm for a broken line living in the scattering diagram of type $A_2$. Take $\gamma$ to be the broken line illustrated in Figure~\ref{fig:gamma}.

\noindent
\begin{center}
\begin{minipage}{.85\linewidth}
\captionsetup{type=figure}
\begin{center}
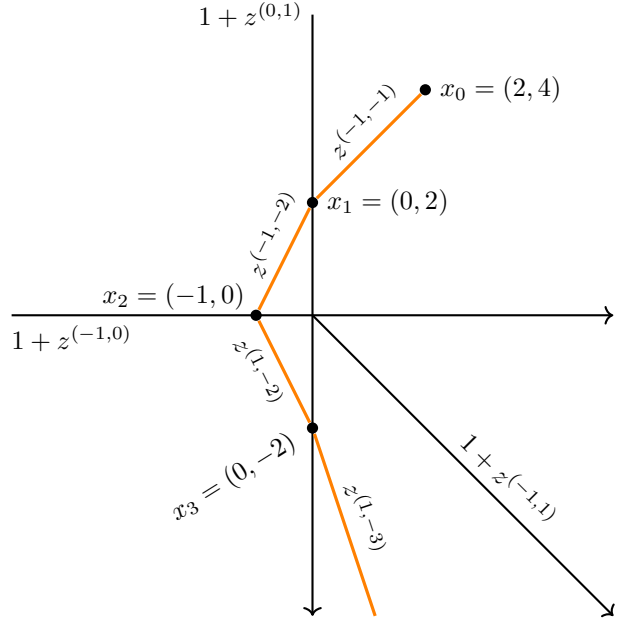

\begin{tikzpicture}

    \def\d{.75}
    \def\l{4}

    \path (-\l,0) coordinate (3) --++ (\l,0) coordinate (0) --++ (\l,0) coordinate (1);
    \path (0,\l) coordinate (2) --++ (0,-2*\l) coordinate (4) --++ (\l,0) coordinate (5);

    \draw[name path = wall1, thick, ->] (3) -- (1) node [pos=0.1, below] {$1+z^{\lrp{-1,0}}$};
    \draw[name path = wall2, thick, ->] (2) -- (4) node [pos=0, left] {$1+z^{\lrp{0,1}}$};
    \draw[name path = wall3, thick, ->] (0) -- (5) node [pos=0.6, sloped, above] {$1+z^{\lrp{-1,1}}$};

    \node [circle, fill, inner sep = 1.5pt]  (x0) at (2*\d,4*\d) {}; 
    \node [circle, fill, inner sep = 1.5pt]  (x1) at (0,2*\d) {}; 
    \node [circle, fill, inner sep = 1.5pt]  (x2) at (-\d,0) {}; 
    \node [circle, fill, inner sep = 1.5pt]  (x3) at (0,-2*\d) {}; 

    \node at (2*\d+1,4*\d) {$x_0= (2,4)$}; 
    \node at (1,2*\d) {$x_1= (0,2)$}; 
    \node at (-1.85,.25) {$x_2= (-1,0)$};     
    \path (-\l,-\l)--(x3) node [pos=.75, sloped] {$x_3= (0,-2)$}; 

    \path[name path = lin] (x3)--++(1,-3);
    \path[name path = bot] (-\l,-\l)--(\l,-\l);
    \path[name intersections={of=lin and bot,by=end}];
    
    \draw[very thick, orange] (x0) -- (x1) node [pos=.4,sloped, above] {\tc{black}{$z^{\lrp{-1,-1}}$}} -- (x2) node [pos=.35,sloped, above] {\tc{black}{$z^{\lrp{-1,-2}}$}} -- (x3) node [pos=.4,sloped, below] {\tc{black}{$z^{\lrp{1,-2}}$}} -- (end) node [pos=.5,sloped, above] {\tc{black}{$z^{\lrp{1,-3}}$}};

\end{tikzpicture}
\captionof{figure}{\label{fig:gamma} The broken line $\gamma$.} 

\end{center}
\end{minipage}
\end{center}

We let $a=1$ and $b=2$. 
Then $\tilde{x}_0= \left( \frac{2}{3}, \frac{4}{3}\right)$.
The first step is to construct the line segment $l_0$ which connects $\dfrac{\mono_0}{a} = (-1,-1)$ and $\tilde{x}_0= \left( \frac{2}{3}, \frac{4}{3}\right)$. We obtain that $\tilde{x}_1= \left( 0, \frac{2}{5}\right)$.  See Figure \ref{fig:l0}.

\noindent
\begin{center}
\begin{minipage}{.85\linewidth}
\captionsetup{type=figure}
\begin{center}
\begin{tikzpicture}

    \def\d{.75}
    \def\l{4}
    \def\op{.25}
    
    \path (-\l,0) coordinate (3) --++ (\l,0) coordinate (0) --++ (\l,0) coordinate (1);
    \path (0,\l) coordinate (2) --++ (0,-2*\l) coordinate (4) --++ (\l,0) coordinate (5);

    \draw[name path = wall1, thick, ->] (3) -- (1);
    \draw[name path = wall2, thick, ->] (2) -- (4);
    \draw[name path = wall3, thick, ->] (0) -- (5);

    \node [circle, fill, inner sep = 1.5pt, opacity=\op]  (x0) at (2*\d,4*\d) {}; 
    \node [circle, fill, inner sep = 1.5pt, opacity=\op]  (x1) at (0,2*\d) {}; 
    \node [circle, fill, inner sep = 1.5pt, opacity=\op]  (x2) at (-\d,0) {}; 
    \node [circle, fill, inner sep = 1.5pt, opacity=\op]  (x3) at (0,-2*\d) {}; 

    \path[name path = lin] (x3)--++(1,-3);
    \path[name path = bot] (-\l,-\l)--(\l,-\l);
    \path[name intersections={of=lin and bot,by=end}];
    
    \draw[very thick, orange, opacity = \op] (x0) -- (x1) -- (x2) -- (x3) -- (end);
    \draw [gray, opacity=\op] (0) -- (.5*\l,\l); 

    \node [circle, fill, inner sep = 1.5pt] (xt0) at (.667*\d,1.333*\d) {};
    \node [circle, fill, inner sep = 1.5pt] (m0) at (-\d,-\d) {};

    \path [name path = L0] (xt0) -- (m0);
    \path [name intersections={of=L0 and wall2,by=xt1}];
    
    \draw [very thick, magenta] (xt0) -- (xt1) node [circle, fill, inner sep = 1.5pt, black, pos=1] {}; 
    \draw (xt1)--(m0) node [pos=.65, sloped, below] {$l_0$};

    \node at (.9,1.333*\d) {$\tilde{x}_0$}; 
    \node at (-.3,.3) {$\tilde{x}_1$}; 
    \node at (-1.1,-1.1) {$\dfrac{\mono_0}{a}$}; 

\end{tikzpicture}

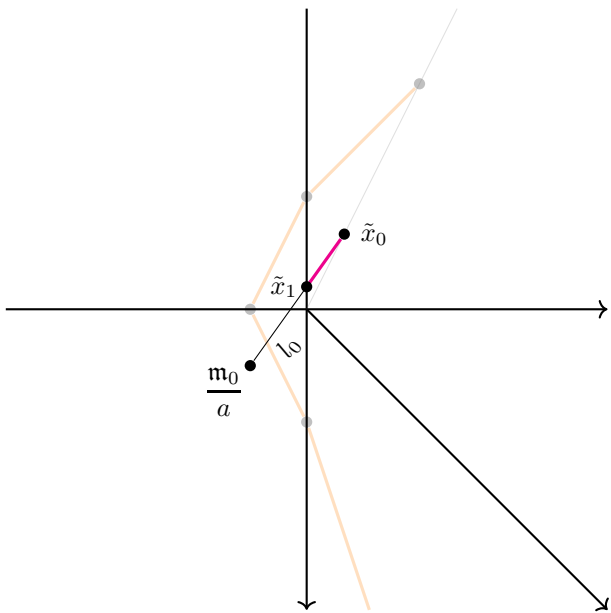
\captionof{figure}{\label{fig:l0} The first step of the algorithm.} 

\end{center}
\end{minipage}
\end{center}

The line $l_0$ intersects the wall $y$-axis at the unique point $\tilde{x}_1 = \left(0,\frac{2}{5} \right)$.
Repeating this process for the remaining exponent vectors, we obtain the broken line segment depicted in Figure \ref{fig:constructsegment}.

\noindent
\begin{center}
\begin{minipage}{\linewidth}
\captionsetup{type=figure}
\begin{center}
\begin{tikzpicture}

    \def\d{.75}
    \def\l{4}
    \def\op{.25}
    
    \path (-\l+1,0) coordinate (3) --++ (\l-1,0) coordinate (0) --++ (\l-.5,0) coordinate (1) --++ (0,-\l+.5) coordinate (5);
    \path (0,\l) coordinate (2) --++ (0,-2*\l) coordinate (4); 

    \draw[name path = wall1, thick, ->] (3) -- (1);
    \draw[name path = wall2, thick, ->] (2) -- (4);
    \draw[name path = wall3, thick, ->] (0) -- (5);

    \node [circle, fill, inner sep = 1.5pt, opacity=\op]  (x0) at (2*\d,4*\d) {}; 
    \node [circle, fill, inner sep = 1.5pt, opacity=\op]  (x1) at (0,2*\d) {}; 
    \node [circle, fill, inner sep = 1.5pt, opacity=\op]  (x2) at (-\d,0) {}; 
    \node [circle, fill, inner sep = 1.5pt, opacity=\op]  (x3) at (0,-2*\d) {}; 

    \path[name path = lin] (x3)--++(.8333,-2.5);
    \path[name path = bot] (-\l+1,-\l)--(\l-.5,-\l);
    \path[name intersections={of=lin and bot,by=end}];
    
    \draw[very thick, orange, opacity = \op] (x0) -- (x1) -- (x2) -- (x3) -- (end);
    \draw [gray, opacity=\op] (0) -- (.5*\l,\l); 

    \node [circle, fill, inner sep = 1.5pt] (xt0) at (.667*\d,1.333*\d) {};
    \coordinate (m0) at (-\d,-\d);

    \path [name path = L0] (xt0) -- (m0);
    \path [name intersections={of=L0 and wall2,by=xt1c}];
    \node [circle, fill, inner sep = 1.5pt] (xt1) at (xt1c) {};

    \draw [very thick, magenta] (xt0) -- (xt1); 

    \node at (.667*\d+1,1.333*\d) {$\tilde{x}_0= \lrp{\frac{2}{3},\frac{4}{3}}$}; 
    \node at (1,.3) {$\tilde{x}_1= \lrp{0,\frac{2}{5}}$}; 

    \node [circle, fill, inner sep = 1.5pt] (m1) at (-\d,-2*\d) {};
    \node at (-2,-1.6) {$\frac{\mono_1}{a}= \lrp{-1,-2}$}; 
    \node [circle, fill, inner sep = 1.5pt] (xt2) at (-.167*\d,0) {};
    \node (x2lab) at (-2*\d,1.3) {$\tilde{x}_2=\lrp{-\frac{1}{6},0}$};

    \draw [very thick, magenta] (xt1)--(xt2);
    \draw (xt2)--(m1);
    
    \node [circle, fill, inner sep = 1.5pt] (m2) at (\d,-2*\d) {};
    \node [circle, fill, inner sep = 1.5pt] (xt3) at (0,-.2857*\d) {};

    \draw [very thick, magenta] (xt2)--(xt3);
    \draw (xt3)--(m2);

    \node [circle, fill, inner sep = 1.5pt] (m3) at (\d,-3*\d) {};
    \draw [very thick, magenta] (xt3)--(m3);

    \node (x3lab) at (2,-.5) {$\tilde{x}_3= \lrp{0,-\frac{2}{7}}$}; 
    \path (m2)--++ (2,-2) node [pos=.4, sloped] {$\frac{\mono_2}{a}= \lrp{1,-2}$}; 
    \path (m3)--++ (1.75,-1.75) node [pos=.45, sloped] {$\frac{\mono_3}{a}= \lrp{1,-3}$}; 
    
    \draw[->, darkgray] (x3lab)--(xt3);
    \draw[->, darkgray] (x2lab)--(xt2);
  
  
\begin{scope}[xshift=7.5cm]
    \def\d{2.666}
    \def\l{4}
    \def\op{.25}
    \def\fudge{1}
    
    \path (-\l+1,0) coordinate (3) --++ (\l-1,0) coordinate (0) --++ (\l-.5,0) coordinate (1) --++ (0,-\l+.5) coordinate (5);
    \path (0,\l) coordinate (2) --++ (0,-2*\l) coordinate (4); 

    \draw[name path = wall1, thick, ->] (3) -- (1);
    \draw[name path = wall2, thick, ->] (2) -- (4);
    \draw[name path = wall3, thick, ->] (0) -- (5);

    \node [circle, fill, inner sep = 1.5pt] (xt0) at (.667*\d,1.333*\d) {};
    \coordinate (m0) at (-\d,-\d);

    \path [name path = L0] (xt0) -- (m0);
    \path [name intersections={of=L0 and wall2,by=xt1c}];
    \node [circle, fill, inner sep = 1.5pt] (xt1) at (xt1c) {};

    \draw [very thick, magenta] (xt0) -- (xt1); 

    \node at (.667*\d+1,1.333*\d) {$\tilde{x}_0= \lrp{\frac{2}{3},\frac{4}{3}}$}; 
    \node at (1,1) {$\tilde{x}_1= \lrp{0,\frac{2}{5}}$}; 

    \node [circle, fill, inner sep = 1.5pt] (xt2) at (-.167*\d,0) {};
    \node at (-1.5,.3) {$\tilde{x}_2=\lrp{-\frac{1}{6},0}$};

    \draw [very thick, magenta] (xt1)--(xt2);

    \node [circle, fill, inner sep = 1.5pt] (xt3) at (0,-.2857*\d) {};

    \draw [very thick, magenta] (xt2)--(xt3);
    
    \path[name path = lin] (xt3)--++(\fudge,-3.286*\fudge);
    \path[name path = bot] (-\l+1,-\l)--(\l-.5,-\l);
    \path[name intersections={of=lin and bot,by=end}];    
    
    \draw [very thick, magenta] (xt3)--(end);

    \node at (-1.15,-.85) {$\tilde{x}_3= \lrp{0,-\frac{2}{7}}$}; 
        
\end{scope}  
  
\end{tikzpicture}

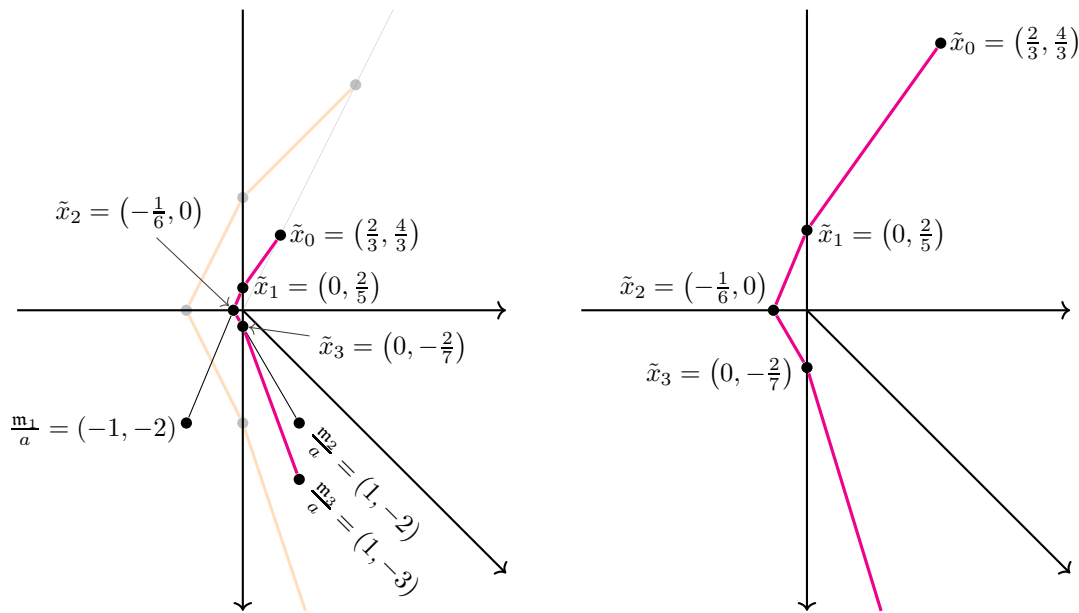
\captionof{figure}{\label{fig:constructsegment} Support of a broken line segment $\Supp\lrp{\tilde{\gamma}}$ associated to the broken line $\gamma$ of Figure~\ref{fig:gamma} and the pair of integers $a=1$, $b=2$, with a zoomed in view provided on the right.} 

\end{center}
\end{minipage}
\end{center}

\end{example}

\subsection{Associating monomials to the support} \label{sec:assocmono}

\thref{algorithm_1} only allows us to construct $ \Supp(\tilde{\gamma}) $. 
In order to define the broken line segment $\tilde{\gamma}=\left(\Par(\tilde{\gamma}), c_{L_0}, \dots , c_{L_s} \right)$, we need to attach a monomial $ c_i z^{\widetilde{\mono}_i} \in \Z_{>0}[ M^{\circ}]$ to every domain of linearity $\tilde{L}_i$ satisfying the axioms of a broken line.
Note that there are infinitely many broken line segments with the same support.
In \thref{key_lemma} below we construct for each $\lambda \in \Z_{>0}$ a broken line segment with the prescribed support.
We first give a few remarks and definitions.

\begin{remark}
\thlabel{velocity}
Suppose that $\gamma $ is generic and that the bending points belong to $M^\circ_{\Q}$. Then the velocity of $\tilde{\gamma}$ at a domain of linearity $\tilde{L}_i$ must be of the form $C_i \left(\dfrac{\mono_i}{a}-\tilde{x}_i\right)$ for some $C_i \in \Q_{>0}$. Indeed, $\tilde{L}_i$ is contained in the line passing through $\tilde{x}_i$ and $\dfrac{\mono_i}{a}$. Moreover,
both $\tilde{x}_i$ and $\dfrac{\mono_i}{a}$ 
belong to $M^{\circ}_{\Q}$ and $\widetilde{\mono}_i \in M^\circ$. 
\end{remark}

\begin{definition}
Let $\gamma$ be a broken line. For $i \in \{ 0, \dots  ,s  \} $ define
\[
\rho_i(\gamma):= \prod_{k=i}^{s-1}\langle  n_{0,k+1}, \mono_k \rangle.
\]
In particular, $\rho_i(\gamma) \in \Z_{>0}$ for all $i$  and
\[\rho_0(\gamma)  M^{\circ} \subset \rho_{1}(\gamma)  M^{\circ} \subset \cdots \subset \rho_{s}(\gamma) M^{\circ}= M^{\circ}.
\]
Moreover, for $1\leq i \leq s$ the following holds
\[
\rho_{i}(\gamma)=\dfrac{\rho_{i-1}(\gamma)}{\langle n_{0,i}, \mono_{i-1} \rangle}.
\]
\end{definition}

\begin{lemma} \thlabel{key_lemma}
\red{Suppose $\gamma$ is a broken line in $M^{\circ}_{\R}$ such that $\gamma(0)=x_0\in M^{\circ}_{\Q}$. Let $a,b $ and $\lambda$ be positive integers.} 
For each $ i \in \{ 0, \dots , s \} $ we can associate a monomial
\[
c_{\tilde{L}_{i}}z^{\widetilde{\mono}_i} \in \Z_{>0}[\rho_i (\gamma)\lambda M^{\circ}] 
\]
\red{to the domain of linearity $\tilde{L}_i$ of $\Supp(\tilde{\gamma}) $ constructed applying \thref{algorithm_1} to $\gamma $ and $(a,b)$ in order to obtain a broken line segment $\left(\Par(\tilde{\gamma}), c_{\tilde{L}_{s}}, \dots , c_{\tilde{L}_{0}} \right)$.}
\end{lemma}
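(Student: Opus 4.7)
My plan is to construct the monomials $c_{\tilde{L}_i}z^{\widetilde{\mono}_i}$ inductively along $\Supp(\tilde{\gamma})$. I treat the generic case first and then reduce the non-generic case to the generic one by continuity along the defining sequence of generic broken lines.

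In the generic case, \thref{velocity} forces each $\widetilde{\mono}_i$ to be a positive scalar multiple $\alpha_i(a\tilde{x}_i-\mono_i)$; only $\alpha_i$ and $c_{\tilde{L}_i}$ remain to be chosen, subject to (i) $\widetilde{\mono}_i \in \rho_i(\gamma)\lambda M^\circ$ and (ii) the wall-crossing rule at each $\tilde{x}_i$ matching the scattering function $F_{\mathfrak{J}_i}$ in the sense of \thref{well_defined_bend}. I proceed by backward induction from $i=s$. Set $c_{\tilde{L}_s}=1$ and $\widetilde{\mono}_s = \alpha_s(a\tilde{x}_s-\mono_s)$, with $\alpha_s$ a positive integer chosen large enough both to ensure $\widetilde{\mono}_s \in \lambda M^\circ$ and to absorb the rational denominators that will appear in the inductive step. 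For $i\to i-1$ I apply the wall-crossing rule to $c_{\tilde{L}_i} z^{\widetilde{\mono}_i}$ through $F_{\mathfrak{J}_i}^{\epsilon\langle n_{0,i},\widetilde{\mono}_i\rangle}$ and pick out the summand whose exponent lies in the geometric direction $a\tilde{x}_{i-1}-\mono_{i-1}$; this gives $\widetilde{\mono}_{i-1} = \widetilde{\mono}_i - \alpha_i\delta_i\,p_1^*(n_{0,i})$ with $\delta_i$ the positive integer satisfying $\mono_{i-1} = \mono_i + \delta_i\,p_1^*(n_{0,i})$ (the bend of $\gamma$ at $x_i$), and $c_{\tilde{L}_{i-1}}$ a positive-integer binomial coefficient from the expansion.

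The key identity is that this wall-crossing output automatically points in the prescribed direction $a\tilde{x}_{i-1}-\mono_{i-1}$. Collinearity of $\tilde{x}_{i-1},\tilde{x}_i,\mono_{i-1}/a$ from \thref{algorithm_1} lets me write $\tilde{x}_i = (1-t_i)\tilde{x}_{i-1} + t_i\mono_{i-1}/a$, and pairing with $n_{0,i}$ (using $\tilde{x}_i \in n_{0,i}^\perp$) evaluates $1/(1-t_i) = \langle n_{0,i},\mono_{i-1}\rangle/(\langle n_{0,i},\mono_{i-1}\rangle - a\langle n_{0,i},\tilde{x}_{i-1}\rangle)$. A short computation then gives
\[
a\tilde{x}_{i-1}-\mono_{i-1} = \tfrac{\langle n_{0,i},\mono_{i-1}\rangle}{\langle n_{0,i},\mono_{i-1}\rangle - a\langle n_{0,i},\tilde{x}_{i-1}\rangle}\bigl((a\tilde{x}_i-\mono_i) - \delta_i\,p_1^*(n_{0,i})\bigr),
\]
matching the wall-crossing output with $\alpha_{i-1}/\alpha_i = 1/(1-t_i)$. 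Crucially, the numerator $\langle n_{0,i},\mono_{i-1}\rangle$ equals $\rho_{i-1}(\gamma)/\rho_i(\gamma)$, so the lattice condition $\widetilde{\mono}_{i-1}\in\rho_{i-1}(\gamma)\lambda M^\circ$ is automatic provided $\alpha_s$ is divisible by the product of the rational denominators $\langle n_{0,i},\mono_{i-1}\rangle - a\langle n_{0,i},\tilde{x}_{i-1}\rangle$ over $1\leq i\leq s$ together with a denominator-clearing factor for $a\tilde{x}_s$ in $M^\circ$.

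For the non-generic case I apply the generic construction to each $\gamma_k$ in the defining sequence of \thref{def:broken}. The walls $\wall_{i_j}$ and exponents $\mono_i$ are common along the sequence, so $\rho_i(\gamma_k) = \rho_i(\gamma)$ and $\delta_i$ are independent of $k$; discreteness of $\rho_i(\gamma)\lambda M^\circ$ then forces the sequences $(\widetilde{\mono}_{i;k})_k$ and $(c_{\tilde{L}_{i;k}})_k$ to be eventually constant, and their stable values provide the required monomials on $\tilde{\gamma}$ compatibly with \thref{determined}. The main obstacle is the inductive bookkeeping: verifying that the geometric scalar $1/(1-t_i)$ cancels with the arithmetic factor $\langle n_{0,i},\mono_{i-1}\rangle$ exactly on the nose at every step, so that lattice membership in $\rho_i(\gamma)\lambda M^\circ$ propagates through all $s$ iterations without requiring $\lambda$ to be enlarged beyond what the statement permits.
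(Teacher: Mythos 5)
Your underlying idea --- build the decoration exponents $\widetilde{\mono}_i$ inductively so that each step is a valid wall-crossing shift in the direction dictated by \thref{algorithm_1}, and let one global scalar enforce the lattice conditions --- is the paper's idea, but you run the induction backwards from $i=s$ while the paper runs it forward from $i=0$ with the explicit normalisation $C_0 = a(a+b)\rho_0(\gamma)\lambda$. That choice of direction matters: since $\rho_0 M^\circ \subset \rho_1 M^\circ \subset \cdots \subset \rho_s M^\circ = M^\circ$, the forward step only has to land in a \emph{larger} lattice at each $i$, which is essentially automatic, whereas your backward step must land in a strictly smaller one at each stage, and the whole burden is pushed onto choosing $\alpha_s$ ``large enough''. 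You have not made that precise; being ``divisible by the product of the rational denominators $\langle n_{0,i},\mono_{i-1}\rangle - a\langle n_{0,i},\tilde{x}_{i-1}\rangle$'' is not a meaningful divisibility condition since these are not integers. Separately, your displayed collinearity identity has the fraction inverted. Pairing $\tilde{x}_i = (1-t_i)\tilde{x}_{i-1}+t_i \mono_{i-1}/a$ with $n_{0,i}$ gives $1-t_i = \langle n_{0,i},\mono_{i-1}\rangle/\bigl(\langle n_{0,i},\mono_{i-1}\rangle-a\langle n_{0,i},\tilde{x}_{i-1}\rangle\bigr)$, not $1/(1-t_i)$, and the identity should read
\[
a\tilde{x}_{i-1}-\mono_{i-1} = \tfrac{\langle n_{0,i},\mono_{i-1}\rangle-a\langle n_{0,i},\tilde{x}_{i-1}\rangle}{\langle n_{0,i},\mono_{i-1}\rangle}\bigl((a\tilde{x}_i-\mono_i)-\delta_i\, p_1^*(n_{0,i})\bigr),
\]
with scalar recursion $\alpha_{i-1}/\alpha_i = 1-t_i$. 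Your stated numerical value of $\alpha_{i-1}/\alpha_i$ happens to be correct because the two reciprocal slips cancel, but the displayed identity, taken together with the ratio you wrote, would force $(1-t_i)^2 = 1$ at the wall-crossing check.

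The genuine gap is the non-generic case. Applying the generic construction to each $\gamma_k$ does not produce exponents $\widetilde{\mono}_{i;k}$ that even lie in $\rho_i(\gamma)\lambda M^\circ$: the approximating endpoints $\gamma_k(0)$ in \thref{def:broken} are allowed to be irrational, so the intermediate points $\tilde{x}_{i;k}$ need not be rational, and $\widetilde{\mono}_{i;k}$ is a scalar along a direction $a\tilde{x}_{i;k}-\mono_i$ that genuinely varies with $k$. Discreteness of the target lattice gives you ``eventually constant'' only if you already have convergence, which you have not established and which is not true for the naive choice. The paper handles this differently: it computes the exponents $\widetilde{\mono}_i$ once from the \emph{limit} data $\tilde{x}_i$ (which are rational because $x_0\in M^\circ_\Q$), and then builds a fresh sequence $\eta_k$ of generic broken line segments by a triangle construction inside the planes $\widehat{\Pi}_{i-1;k}$, arranged so that each $\eta_k$ carries these fixed exponent vectors while $\Supp(\eta_k)\to\Supp(\tilde{\gamma})$. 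Some construction of this type is required; ``eventually constant'' cannot replace it.
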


\begin{proof}
\red{
First we note that it is sufficient to consider the case where $x_0 \in M^{\circ}$. 
Indeed, if $x_0 \notin M^{\circ}$ we can pick a positive integer $\beta \in \Z_{>0}$ such that $\beta x_{0}\in M^{\circ}$. 
We then construct an auxiliary broken line $\beta \gamma$ by dilating each domain of linearity of $\gamma$ by $\beta$, while also scaling the corresponding exponent vectors by $\beta$.
Finally, observe that the support of the piecewise linear segment obtained by applying \thref{algorithm_1} to $\beta \gamma$ and the pair of positive integers $(\beta a, \beta b)$ coincides with the support of the piecewise linear segment obtained by applying the algorithm to $\gamma$ and $(a, b)$.
Therefore, from now on we assume $x_0 \in M^{\circ}$. We treat the generic and non-generic cases separately.}

\textbf{Case 1}: Suppose that $\gamma $ is generic.
As in \thref{not:numbering}, for $i \in \{1, \dots ,s \} $ we index the collection of walls containing $x_i$ by $\mathfrak{J}_i$, and we write
\eqn{F_{\mathfrak{J}_i} = \prod_{j \in \mathfrak{J}_i} f_{\wall_j} = 1 +\sum_{k\geq 1} d_k z^{k\, m_{0,i}},}
where $m_{0,i}= p^*(n_{0,i})$ for a primitive $n_{0,i}$ with $x_i \in n_{0,i}^\perp$, and $d_k$ are coefficients in $\Z_{\geq 0}$.\footnote{That coefficients can be taken in $\Z_{\geq 0}$ here is a major result in cluster theory that implies the {\it{positivity of the Laurent phenomenon}}.  See \cite[Theorem~4.10]{GHKK}.}
Since $\gamma$ is a broken line, every bend of $\gamma$ is allowed, and so
\eq{\mono_{i-1}-\mono_{i} = k_i\, m_{0,i},}{eq:bending} 
the exponent vector of a non-zero summand $d_{k_i} z^{^{k_i\, m_{0,i}}}$ of $F_{\mathfrak{J}_i}^{\lra{n_{0,i}, \mono_i}}$.
Our goal is to attach monomials $c_{\tilde{L}_i} z^{\mono_{\tilde{L}_i}}$ to each domain of linearity $\tilde{L}_i$ of $\Supp(\tilde{\gamma})$
in such a way that
$\widetilde{\mono}_{i-1}-\widetilde{\mono}_{i}$
is the exponent vector of a non-zero summand $\tilde{d}_{k_i'} z^{^{k_i'\, m_{0,i}}}$ of $F_{\mathfrak{J}_i}^{\lra{n_{0,i}, \widetilde{\mono}_i}}$.

We claim that each $\widetilde{\mono}_i$ can be taken such that 
\begin{itemize}
    \item $\lra{n_{0,i}, \mono_i}$ divides $ \lra{n_{0,i}, \widetilde{\mono}_i} $
    \item $\displaystyle{\widetilde{\mono}_{i-1}- \widetilde{\mono}_{i} = \frac{\lra{n_{0,i}, \widetilde{\mono}_i}}{\lra{n_{0,i}, \mono_i}} \lrp{\mono_{i-1}-\mono_i} }$.
\end{itemize}
Since coefficients are non-negative, no cancellations will occur when we multiply the functions $f_{\wall_{j}}$, $j\in \mathfrak{J}_i$, and it follows that $\widetilde{\mono}_{i-1}-\widetilde{\mono}_{i}$ is the exponent vector of a summand of 
\eqn{\lrp{F_{\mathfrak{J}_i}^{\lra{n_{0,i}, \mono_i}}}^{ \frac{\lra{n_{0,i}, \widetilde{\mono}_i}}{\lra{n_{0,i},\mono_i}} } = F_{\mathfrak{J}_i}^{\lra{n_{0,i}, \widetilde{\mono}_i}} . }

Now we proceed to show the claim.
By \thref{velocity},
we must be able to write
\eq{\widetilde{\mono}_i = C_i \lrp{ \frac{\mono_i}{a} - \tilde{x}_i }}{eq:Ci}
for some $C_i\in \Q_{>0}$.
Now define
\[
C_0:= a(a+b)\rho_0(\gamma) \lambda
\]
and in turn 
\[
\widetilde{\mono}_0 := C_0\left( \dfrac{\mono_0}{a} - \tilde{x}_0\right).
\]
\red{Since $x_0 $ is in $M^{\circ}$ we have that $\widetilde{\mono}_0\in \rho_0(\gamma) \lambda M^{\circ}$ and one checks that $\langle n_{0,1}, \mono_{0} \rangle $ divides $\langle n_{0,1},\widetilde{\mono}_{0} \rangle $.}
Now suppose we have defined $\widetilde{\mono}_{i-1}\in \rho_{i-1}(\gamma)\lambda M^{\circ} $ for some $1\leq i \leq s$ and that $\widetilde{\mono}_{i-1}$ satisfies 
\begin{equation}
\label{m'_i-1}
    \widetilde{\mono}_{i-1}=C_{i-1}\left( \dfrac{\mono_{i-1}}{a} - \tilde{x}_{i-1}\right)
\end{equation}
for some $C_{i-1}\in \Z_{>0}$.  
Again, $\langle n_{0,i},\mono_{i-1} \rangle $ divides $\langle n_{0,i},\widetilde{\mono}_{i-1} \rangle $ since $\widetilde{\mono}_{i-1}\in \rho_{i-1}(\gamma)\lambda M^{\circ} $. 
We define
\begin{equation}
\label{C_i}
    C_i:=\dfrac{a\langle n_{0,i},\widetilde{\mono}_{i-1}\rangle}{\langle  n_{0,i},\mono_{i-1} \rangle}
\end{equation}
and thus
\begin{equation}
\label{m'_i}
\widetilde{\mono}_{i}:= \dfrac{a\langle  n_{0,i}, \widetilde{\mono}_{i-1}\rangle}{\langle n_{0,i},\mono_{i-1}\rangle}\left(\dfrac{\mono_i}{a}- \tilde{x}_i \right).
\end{equation}
Observe that by construction $ C_i \in \Z_{>0} $.
We will soon see that $\widetilde{\mono}_i\in \rho_i(\gamma) \lambda M^{\circ}$.
First notice that
\begin{equation}
\label{x'_i}
     \tilde{x}_i=-\dfrac{\langle n_{0,i},\tilde{x}_{i-1}\rangle}{\langle n_{0,i},\widetilde{\mono}_{i-1}\rangle}\widetilde{\mono}_{i-1}+\tilde{x}_{i-1}.
\end{equation}
To see this let $x'$ denote the right hand side of the equation. 
We claim that $x'$ is the intersection point of the line $l_{i-1}$ and the walls $\wall_{i_j}$.
Indeed, $x'$ can be thought of as a vector based at $ \tilde{x}_{i-1}$ that runs parallel to $l_{i-1}$ (recall $\widetilde{\mono}_{i-1}  $ is a multiple of $\dfrac{\mono_{i-1}}{a}-\tilde{x}_{i-1} $ and $l_{i-1}$ passes through $\dfrac{\mono_{i-1}}{a}$ and $\tilde{x}_{i-1}$). 
Clearly $ \langle n_{0,i},x' \rangle = 0 $ which implies that $ x'\in  \wall_{i_j}$. Since $\tilde{x}_i$ is the unique point with this property we have $x'=\tilde{x}_i$. 
Now we compute:
\begin{align*}
   \widetilde{\mono}_i & \overset{(\ref{m'_i})}{=} C_i\left(\dfrac{\mono_i}{a}-\tilde{x}_i\right) 
   \\
    & \underset{(\ref{x'_i})}{\overset{(\ref{eq:bending}) }{=}} C_i\left( \dfrac{\mono_{i-1}-k_i\, m_{0,i}}{a}+ \dfrac{\langle n_{0,i},\tilde{x}_{i-1}\rangle}{\langle n_{0,i},\widetilde{\mono}_{i-1}\rangle}\widetilde{\mono}_{i-1}-\tilde{x}_{i-1}\right)
    \\
    & \overset{(\ref{m'_i-1})}{=} C_i\left(\left(\dfrac{1}{C_{i-1}}+ \dfrac{\langle n_{0,i},\tilde{x}_{i-1}\rangle}{\langle n_{0,i},\widetilde{\mono}_{i-1}\rangle} \right)\widetilde{\mono}_{i-1} - \dfrac{k_i}{a}m_{0,i}\right)
    \\     
    & \overset{(\ref{m'_i-1})}{=} C_i\left( \dfrac{\langle  n_{0,i},\mono_{i-1} \rangle}{a\langle n_{0,i},\widetilde{\mono}_{i-1}\rangle} \widetilde{\mono}_{i-1}- \dfrac{k_i}{a}m_{0,i}\right)
    \\
    & \overset{(\ref{C_i})}{=}   \widetilde{\mono}_{i-1} - k_i \dfrac{\langle n_{0,i},\widetilde{\mono}_{i-1}\rangle}{\langle  n_{0,i},\mono_{i-1} \rangle}m_{0,i} .
\end{align*}
So, we have 
\eqn{\widetilde{\mono}_{i-1} - \widetilde{\mono}_{i} = \dfrac{\lra{ n_{0,i},\widetilde{\mono}_{i-1}}}{\lra{n_{0,i},\mono_{i-1} }} k_i\, m_{0,i} 
= \dfrac{\lra{ n_{0,i},\widetilde{\mono}_{i-1}}}{\lra{n_{0,i},\mono_{i-1} }} \lrp{\mono_{i-1}-\mono_i}}
as claimed.
Finally, since $\widetilde{\mono}_{i-1} \in \rho_{i-1}(\gamma)\lambda M^\circ$, we have that $\langle n_{0,i}, \widetilde{\mono}_{i-1}\rangle \in  \rho_{i-1} (\gamma)\lambda \Z$ and thus $ \dfrac{\langle n_{0,i},\widetilde{\mono}_{i-1}\rangle}{\langle n_{0,i},\mono_{i-1}\rangle}\in \rho_i(\gamma) \lambda\Z $. 
In particular, both $ \widetilde{\mono}_{i-1}$ and $ \dfrac{\langle n_{0,i},\widetilde{\mono}_{i-1}\rangle}{\langle n_{0,i},\mono_{i-1}\rangle}k_i\, m_{0,i}$ belong to $ \rho_i(\gamma)\lambda M^\circ$. (Recall $ \rho_{i-1} (\gamma) M^{\circ} \subset \rho_i(\gamma)  M^{\circ}$.) Therefore, $\widetilde{\mono}_i\in \rho_i(\gamma) \lambda M^{\circ}$. This completes the proof in \text{Case 1}.\vspace{2mm}

\textbf{Case 2}: Suppose $\gamma $ is non-generic. We need to construct a sequence of broken line segments $(\eta_k)_{k \in \mathbb N}$ as in \thref{def:broken} \red{converging to $\tilde{\gamma}$}. 
\red{In order to do so, we first apply \thref{algorithm_1} to the broken line $\gamma_k $ for each $k \in \mathbb N$.
As a result we obtain the support of a generic piecewise linear segment $\Supp(\tilde{\gamma}_k)$ bending at the same walls as $ \gamma$.
Note that in general the domains of linearity of $\tilde{\gamma}_k$ are not parallel to the domains of linearity of $\tilde{\gamma}$.
However, the sequence $(\Supp(\tilde{\gamma}_k))_{k \in \mathbb{N}}$ does converge to $\Supp(\tilde{\gamma})$ and we will use this fact to construct $\Supp(\eta_k)$ from $\Supp(\tilde{\gamma}_k)$.} 
For $i\in \{1, \dots , s \}$ let $ \tilde{x}_{i;k}$ be the intersection point of $\tilde{\gamma}_k$ with the walls $\lrc{\wall_{i_j}}_{j\in \mathfrak{J}_i} $ and set $\tilde{x}_{0;k}:= \dfrac{\gamma_k(0)}{a+b}$. 
Let $\widehat{L}_{i;k}$ be the line segment with endpoints $\tilde{x}_{i;k}$ and $\tilde{x}_i$. 
Since both $\tilde{x}_{i;k}$ and $ \tilde{x}_i$ are contained in $ \wall_{i_j}$, which is convex, it follows that $\widehat{L}_{i;k}\subset \wall_{i_j}$. 
We can actually say something stronger: since $\tilde{x}_{i;k}\notin \Sing(\scat)$, 
if $\tilde{x}_i \notin \Sing(\scat)$ then $\widehat{L}_{i;k}$ is contained in
$\wall_{i_j} \setminus \lrp{ \Sing\lrp{\scat} \cap \wall_{i_j} }$, 
and if $\tilde{x}_i \in \Sing(\scat)$ then
$\widehat{L}_{i;k} \cap \Sing(\scat)= \{ \tilde{x}_i \}$.

We proceed to construct $\Par(\eta_k) $ for arbitrary $k \in \N$. 
Let $ C_0:= a(a+b)\rho_0(\gamma) \lambda$ and let $ {\widetilde{\mono}_0 := C_0\left( \dfrac{\mono_0}{a} - \tilde{x}_0\right)}$. For $i\in \{1, \dots , s \}$ we construct $C_i$ and $\widetilde{\mono}_{i}$ as in \text{Case 1}. 
These are going to be the exponent vectors of the monomials associated to the domains of linearity of the $\eta_k$'s. 
To describe $\Supp(\eta_k)$ we first make a few observations. 
By construction the points $\dfrac{\mono_{i-1}}{a}, \tilde{x}_{i;k}$ and $\tilde{x}_{i-1;k}$ 
(resp. $\dfrac{\mono_{i-1}}{a}, \tilde{x}_{i}$ and $\tilde{x}_{i-1}$) are colinear for all $ i \in \{1, \dots , s \}$ (see Figure~\ref{fig:generic_approx}). 
Let $\widehat{\Pi}_{i-1;k}$ be the plane containing $\dfrac{\mono_{i-1}}{a} , \tilde{x}_{i;k}$ and $\tilde{x}_{i}$. 
The line segments $\widehat{L}_{i;k}$ and $\widehat{L}_{i-1;k}$ are contained in the plane $\widehat{\Pi}_{i-1;k}$ as the endpoints of the segments are contained therein.  
Recall that we have denoted by $l_{i-1}$ the line passing through $\dfrac{\mono_{i-1}}{a}$ and $\tilde{x}_{i-1}$, hence $l_{i-1} \subset \widehat{\Pi}_{i-1;k}$. 
As shown in Figure~\ref{fig:generic_approx}, take the triangle defined by the points $\dfrac{\mono_{i-1}}{a}$, $\tilde{x}_{i-1}$, and $\tilde{x}_{i-1;k}$.
This contains the triangle defined by the points $\dfrac{\mono_{i-1}}{a}$, $\tilde{x}_{i}$, and $\tilde{x}_{i;k}$.
Observe that any line in $\widehat{\Pi}_{i-1;k}$ parallel to the edge $l_{i-1} $ and intersecting the edge $\widehat{L}_{i;k}$
must also intersect the edge $\widehat{L}_{i-1;k}$.
Moreover, if this line misses the point $\tilde{x}_i$, it must also miss $\tilde{x}_{i-1} $ and hence 
must intersect $\wall_{{i-1}_j}\setminus \Sing(\scat)$. 

First we construct the domain of linearity $ L^{(\eta_k)}_{s-1}$ associated to $\eta_k$: let $\hat{l}_{s-1;k}$ be the line parallel to $\tilde{L}_{s-1}$ passing through $\tilde{x}_{s;k}$.
Let $ y_{s-1;k}$ be the intersection point of $\hat{l}_{s-1;k}$ and $\widehat{L}_{s-1;k}$. 
Define $ L^{(\eta_k)}_{s-1}$ to be the line segment whose endpoints are $\tilde{x}_{s;k}$ and $y_{s-1;k}$.
Now assume we have defined a point $y_{i;k} \in \widehat{L}_{i;k}\setminus \{ \tilde{x}_i \}$ for some $i \in \{ s-1, \dots, 1\}$. 
We define the domain of linearity $ L^{(\eta_k)}_{i-1}$ as follows: let $\hat{l}_{i-1;k}$ be the line parallel to $\tilde{L}_{i-1}$ passing through $y_{i;k}$.
Let $ y_{i-1;k}$ be the intersection point of $\hat{l}_{i-1;k}$ and $\widehat{L}_{i-1;k}$. 
Define $ L^{(\eta_k)}_{i-1}$ to be the line segment whose endpoints are $y_{i;k}$ and $y_{i-1;k}$. 
\red{As we want $\eta_k$ to be generic, in the event that $y_{0;k}\in \Supp\lrp{\scat}$, we will lengthen or shorten $L^{(\eta_k)}_0$ by a small irrational amount, thereby preventing the endpoint of $\eta_k$ from lying in a wall.}
Finally, the ray $\tilde{x}_{s;k}+\R_{\geq 0}\widetilde{\mono}_s$ is parallel to to the domain of linearity $\tilde{L}_s$ of $\tilde{\gamma}$. 
Therefore, we can choose a point $y_{s+1;k}$ in this ray whose distance to $\dfrac{\mono_s}{a}$ is less or equal to the distance from $\tilde{x}_{s;k}$ to $\tilde{x}_s$.
We let $L^{(\eta_k)}_s$ be the line segment whose endpoints are $\tilde{x}_{s;k}$ and $y_{s+1;k}$.
We define
\[
\Par(\eta_k)=\left((L^{(\eta_k)}_s,\widetilde{\mono}_s),\dots ,(L^{(\eta_k)}_0,\widetilde{\mono}_0) \right).
\]
By construction each domain of linearity $ L^{(\eta_k)}_i$ has slope $\widetilde{\mono}_i$, all bendings are allowed and the broken line segment $\eta_k$ is generic and completely determined by setting $c_{L^{(\eta_k)}_s}=1$. 
Moreover, since the sequence $(\tilde{x}_{s;k})_{k \in \N}$ converges to $\tilde{x}_s$ then the sequence of  broken line segment $(\eta_k)_{k \in \N}$ converges to $ \tilde{\gamma}$. Indeed, if $y_{i;k}$ converges to $\tilde{x}_{i} $ then $y_{i-1;k}$ converges to $\tilde{x}_{i-1}$ for all $i \in \{ s, \dots , 1 \}$ (taking $\tilde{x}_{s;k}=y_{s;k}$).

\begin{figure}[!htbp]
    \centering
\begin{tikzpicture}[scale=1]

\draw[fill,rounded corners, blue,opacity=.19] (-3,-1) -- (5,-1) -- (5,4) -- (-3,4);
\draw[-,line width=0.19mm] (-2,0) -- (3,0) -- (4,3) -- (-2,0);
\node at (-2,0) {$\bullet$};
\node at (-2.3,-0.4) {$\dfrac{\mono_{i-1}}{a}$};
\node at (3.3,-0.3) {$\tilde{x}_{i-1}$};
\node at (4.3,3.3) {$\tilde{x}_{i-1;k}$};
\draw[-,line width=0.19mm] (1,0) -- (2,2);
\node at (2,2.3) {$\tilde{x}_{i;k}$};
\draw[-, magenta,line width=0.35mm] (5/3,4/3) -- (31/9,4/3);
\draw[-, magenta,line width=0.35mm] (1,0) -- (3,0);
\draw[-, magenta,line width=0.35mm] (2,2) -- (4,3);
\node at (3,0) {$\bullet$};
\node at (2,2) {$\bullet$};
\node at (4,3) {$\bullet$};
\node at (5/3,4/3) {$\bullet$};
\node at (1.2,1.1) {$y_{i;k}$};
\node at (2,-0.3) {${\color{magenta}\tilde{L}_{i-1}}$};
\node at (2.6,1.65) {${\color{magenta}L^{(\eta_k)}_{i-1}}$};
\node at (31/9,4/3) {$\bullet$};
\node at (4,1.1) {$y_{i-1;k}$};
\node at (1,0) {$\bullet$};
\node at (0.7,-0.3) {$\tilde{x}_i$};
\node at (1.7,0.5) {$\widehat{L}_{i;k}$};
\node at (3.8,0.3) {$\widehat{L}_{i-1;k}$};
\node at (-0.5,-0.3) {$l_{i-1}$};
\node at (-2,3.3) {\huge $\widehat{\Pi}_{i-1;k}$};

\end{tikzpicture}
\caption{Construction of the line segment $ L_{i-1}^{(\eta_k)}$ of Case 2 in the proof of \thref{key_lemma}. This segment is contained in the line $\hat{l}_{i-1;k}$ passing through $y_{i;k}$ and parallel to $l_{i-1}$.}
\label{fig:generic_approx}
\end{figure}

\end{proof}

\red{While we have allowed $x_0$ to be any rational point in \thref{key_lemma}, we are primarily interested in the integral case $x_0 \in M^\circ$.
This is because the broken line segments we will construct in \thref{prop:bl2jp} are intrinsically related to structure constants of theta function multiplication.
Using broken lines (as opposed to broken line segments), 
pairs of broken lines $\lrp{\gamma^{(1)},\gamma^{(2)}}$ balanced at a point $x_0$ determine the structure constant $\alpha \lrp{I(\gamma^{(1)}), I(\gamma^{(1)}), x_0}$.
By definition, if $\lrp{\gamma^{(1)},\gamma^{(2)}}$ is balanced at $x_0$, then $x_0$ must be in $M^\circ$.}

\subsection{The main construction} \label{sec:mainconstruction}

Given a broken line $\gamma$  and a pair of integers $(a,b)$, we constructed in \thref{key_lemma} a broken line segment $\tilde{\gamma}:[\tau,0 ]\to M^{\circ}_{\R}$ with one endpoint $\tilde{\gamma}(0) = \frac{x_0}{a+b}$ and the other $\tilde{\gamma}(\tau) = \frac{I(\gamma)}{a}$ for some $\tau<0$. 
In \thref{prop:bl2jp} we will need an explicit description of $\tau$.
With this in mind, let $t_i$ be the time at which $\tilde{\gamma}$ bends at ray $\ray_i$, {\it{i.e.}} $\tilde{\gamma}\lrp{t_i} = \tilde{x}_i$.

\begin{lemma}\thlabel{lem:tildetime}
For each $i\in \{0, \dots ,s \}$ the time $\tau\in (-\infty, 0)$ satisfies the following equation:
\eq{\frac{\mono_i}{a}- {\tilde{x}_i} = \lrp{t_i -\tau}{\widetilde{\mono}_i}}{eq:T1lem}

\end{lemma}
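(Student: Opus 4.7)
My strategy is to reduce the identity to the single claim that
\[
t_i - \tau = \frac{1}{C_i}
\]
for every $i \in \{0, \ldots, s\}$, where $C_i \in \Q_{>0}$ is the positive rational satisfying $\widetilde{\mono}_i = C_i\lrp{\frac{\mono_i}{a} - \tilde{x}_i}$; the existence of $C_i$ is guaranteed by \thref{velocity} and its explicit inductive construction is recorded in the proof of \thref{key_lemma}. Multiplying both sides of the displayed equation by $\widetilde{\mono}_i = C_i\lrp{\mono_i/a - \tilde{x}_i}$ immediately yields the equation to be proved. I will establish the claim by descending induction on $i$ from $s$ down to $0$. The non-generic case then follows by passing to the limit along the generic approximating sequence $(\eta_k)$ from the proof of \thref{key_lemma}, so I may assume $\gamma$ generic.

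For the base case $i = s$: on the unbounded domain $\tilde{L}_s$ the segment $\tilde{\gamma}$ travels with constant velocity $-\widetilde{\mono}_s$ and satisfies $\tilde{\gamma}(\tau) = \mono_s/a$, $\tilde{\gamma}(t_s) = \tilde{x}_s$. Therefore $\mono_s/a - \tilde{x}_s = (t_s - \tau)\widetilde{\mono}_s$, which gives $t_s - \tau = 1/C_s$.

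For the inductive step, suppose $t_{i+1} - \tau = 1/C_{i+1}$, and decompose $t_i - \tau = (t_i - t_{i+1}) + (t_{i+1} - \tau)$. The velocity $-\widetilde{\mono}_i$ of $\tilde{\gamma}$ on $\tilde{L}_i$ yields $\tilde{x}_{i+1} - \tilde{x}_i = (t_i - t_{i+1})\widetilde{\mono}_i$, and by \thref{algorithm_1} the point $\tilde{x}_{i+1}$ lies on the segment $l_i$ from $\tilde{x}_i$ to $\mono_i/a$, so $\tilde{x}_{i+1} - \tilde{x}_i = \lambda_i\lrp{\frac{\mono_i}{a} - \tilde{x}_i}$ for some $\lambda_i > 0$; combined with $\widetilde{\mono}_i = C_i\lrp{\mono_i/a - \tilde{x}_i}$ this gives $t_i - t_{i+1} = \lambda_i/C_i$. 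It therefore suffices to establish the key identity
\[
\frac{1}{C_{i+1}} = \frac{1-\lambda_i}{C_i},
\]
equivalently $C_{i+1}(1-\lambda_i) = C_i$. Combining the recursion $C_{i+1} = a\lra{n_{0,i+1}, \widetilde{\mono}_i}/\lra{n_{0,i+1}, \mono_i}$ from the proof of \thref{key_lemma} with $\widetilde{\mono}_i = C_i\lrp{\mono_i/a - \tilde{x}_i}$ produces
\[
C_{i+1} = C_i\lrp{1 - \frac{a\lra{n_{0,i+1}, \tilde{x}_i}}{\lra{n_{0,i+1}, \mono_i}}},
\]
and pairing $\tilde{x}_{i+1} = \tilde{x}_i + \lambda_i\lrp{\mono_i/a - \tilde{x}_i}$ with $n_{0,i+1}$ together with $\lra{n_{0,i+1}, \tilde{x}_{i+1}} = 0$ yields $a\lra{n_{0,i+1}, \tilde{x}_i}/\lra{n_{0,i+1}, \mono_i} = -\lambda_i/(1-\lambda_i)$. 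Substituting gives $C_{i+1} = C_i/(1-\lambda_i)$, and then $t_i - \tau = \lambda_i/C_i + (1-\lambda_i)/C_i = 1/C_i$, completing the induction.

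The main obstacle is the short computation establishing the key identity $C_{i+1}(1 - \lambda_i) = C_i$. Once it is in place, the rest is straightforward bookkeeping with the constant velocities on each domain of linearity, and the argument reveals why the specific initial constant $C_0 = a(a+b)\rho_0(\gamma)\lambda$ chosen in \thref{key_lemma} produces a segment whose backward linear extensions from each $\tilde{x}_i$ all arrive at $\mono_i/a$ at the same time $\tau$.
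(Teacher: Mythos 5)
Your proof is correct and follows essentially the same route as the paper's: the same reduction of \eqref{eq:T1lem} to the scalar identity $t_i-\tau = 1/C_i$ (the paper's equation \eqref{eq:time}), the same base case at $i=s$ from the unbounded domain, and the same descending induction via the splitting $t_i-\tau=(t_i-t_{i+1})+(t_{i+1}-\tau)$, with your verification of $C_{i+1}=C_i/(1-\lambda_i)$ being a scalarized repackaging of the paper's vector computation using the bending relations. The only implicit point is that $\lambda_i\neq 1$, which is immediate since $\lra{n_{0,i+1},\tilde{x}_{i+1}}=0$ while $\lra{n_{0,i+1},\mono_i}\neq 0$, so $\tilde{x}_{i+1}\neq \mono_i/a$.
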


\begin{proof}

First observe that we can simultaneously tackle the generic and non-generic cases. Indeed, if we have the generic case, then we apply it to the sequence $(\eta_k)_{k \in \N}$ of Case 2 in \thref{key_lemma} to obtain it for non-generic $\tilde{\gamma}$. Now notice that by \eqref{m'_i}, \eqref{eq:T1lem} holds if and only if
\eq{t_i -\tau = \frac{\lra{n_{0,i},\mono_{i-1}}}{a \lra{n_{0,i},\widetilde{\mono}_{i-1}}   }.}{eq:time}
Next, note that 
\eq{\tilde{x}_{i+1}-\tilde{x}_i = \lrp{t_i - t_{i+1}} \widetilde{\mono}_i }{eq:xdiff}
and
\eq{\frac{\mono_{s}}{a}-\tilde{x}_{s} = \lrp{t_{s} - \tau} \widetilde{\mono}_{s}. }{eq:basecase}
Then \eqref{eq:T1lem} holds for $i=s$.
Assume it holds for all $j>i$.
Then we have
\eqn{\lrp{t_i -\tau}\widetilde{\mono}_i &\overset{\phantom{\eqref{eq:time}}}{=} \lrp{t_i -t_{i+1}+t_{i+1}-\tau}\widetilde{\mono}_i\\
&\overset{\phantom{\eqref{eq:time}}}{=}\lrp{t_i -t_{i+1}}\widetilde{\mono}_i +\lrp{t_{i+1}-\tau}\lrp{\widetilde{\mono}_{i+1} + k'_{i+1} m_{0,i+1} }\\
&\overset{\eqref{eq:xdiff}}{=}\tilde{x}_{i+1}-\tilde{x}_{i} + \frac{\mono_{i+1}}{a}-\tilde{x}_{i+1} + \lrp{t_{i+1}-\tau} k'_{i+1} m_{0,i+1}\\
&\overset{\eqref{eq:time}}{=} -\tilde{x}_{i} + \frac{\mono_{i+1}}{a} + \frac{k_{i+1}}{a} m_{0,i+1}\\
&\overset{\eqref{eq:bending}}{\underset{\phantom{\eqref{eq:time}}}{=} } \frac{\mono_i}{a} - \tilde{x}_{i}, 
}
establishing \eqref{eq:T1lem}.
\end{proof}

In what follows we consider many broken lines simultaneously. To avoid confusions the slope of the $i^{\text{th}}$ domain of linearity of a broken line (segment) $\eta $ is denoted by $\mono_i(\eta) $.

\begin{theorem}\thlabel{prop:bl2jp}
\red{Let $(\gamma^{(1)},\gamma^{(2)})$ be a pair of broken lines balanced at $ x_0\in M^{\circ}$ and $(a,b)$ a pair of positive integers.} 
Then there exist a broken line segment $\tilde{\gamma}:\lrb{0,T}\to M^\circ_\R$ such that $\tilde{\gamma}(0)=\dfrac{I(\gamma^{(1)})}{a}$, $\tilde{\gamma}(T)=\dfrac{I(\gamma^{(2)})}{b}$, 
and $\tilde{\gamma}\lrp{\dfrac{b}{a+b} T}= \dfrac{x_0}{a+b}=:\tilde{x}_0$.
\end{theorem}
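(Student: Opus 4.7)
The plan is to build $\tilde{\gamma}$ by gluing two broken line segments produced by \thref{key_lemma}, one for each member of the balanced pair. Write $\mono_0^{(i)}:=F(\gamma^{(i)})$ for the final exponent vector of $\gamma^{(i)}$, so that the balancing condition reads $\mono_0^{(1)}+\mono_0^{(2)}=x_0$. Apply \thref{key_lemma} to $\gamma^{(1)}$ with input pair $(a,b)$ and scaling factor $\lambda_1:=\rho_0(\gamma^{(2)})$ to produce a broken line segment $\tilde{\gamma}_1$ whose support runs from $I(\gamma^{(1)})/a$ to $\tilde{x}_0$. Symmetrically, apply \thref{key_lemma} to $\gamma^{(2)}$ with input pair $(b,a)$ and scaling factor $\lambda_2:=\rho_0(\gamma^{(1)})$ to obtain $\tilde{\gamma}_2'$ running from $I(\gamma^{(2)})/b$ to $\tilde{x}_0$. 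Use \thref{lem:reverse} to reverse $\tilde{\gamma}_2'$, reparametrize both segments so each starts at time $0$, and define $\tilde{\gamma}$ as the concatenation $\tilde{\gamma}_1\cup\tilde{\gamma}_2$ meeting at $\tilde{x}_0$.

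Two things must be checked: that the concatenation is a bona fide broken line segment (so that $\tilde{x}_0$, which is not generally on any wall, is not a spurious bending point), and that the total parametrization places $\tilde{x}_0$ at time $bT/(a+b)$. For the first, the balance condition together with $\tilde{x}_0=x_0/(a+b)$ yields
\[
\frac{\mono_0^{(2)}}{b}-\tilde{x}_0 \;=\; \frac{a}{b}\left(\tilde{x}_0-\frac{\mono_0^{(1)}}{a}\right),
\]
so the final linearity domains $\tilde{L}_0^{(1)}$ and $\tilde{L}_0^{(2)}$ lie on a single line through $\tilde{x}_0$ with opposite orientations. By \thref{key_lemma}, the final exponent vectors are
\[
\widetilde{\mono}_0^{(1)}=a(a+b)\rho_0(\gamma^{(1)})\lambda_1\Bigl(\tfrac{\mono_0^{(1)}}{a}-\tilde{x}_0\Bigr), \qquad \widetilde{\mono}_0^{(2)}=b(a+b)\rho_0(\gamma^{(2)})\lambda_2\Bigl(\tfrac{\mono_0^{(2)}}{b}-\tilde{x}_0\Bigr).
\]
Substituting the displayed identity and using $\rho_0(\gamma^{(1)})\lambda_1=\rho_0(\gamma^{(2)})\lambda_2$ (which is precisely our choice of $\lambda_i$) gives $\widetilde{\mono}_0^{(1)}=-\widetilde{\mono}_0^{(2)}$. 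After reversing $\tilde{\gamma}_2'$, the exponent vector of $\tilde{\gamma}_2$ immediately past $\tilde{x}_0$ is $-\widetilde{\mono}_0^{(2)}=\widetilde{\mono}_0^{(1)}$, matching that of $\tilde{\gamma}_1$ arriving at $\tilde{x}_0$. Hence $\tilde{x}_0$ lies in the interior of a single domain of linearity of $\tilde{\gamma}$.

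For the parametrization, applying \thref{lem:tildetime} at $i=0$ (where $t_0=0$) gives $T^{(1)}=1/[a(a+b)\rho_0(\gamma^{(1)})\lambda_1]$ and $T^{(2)}=1/[b(a+b)\rho_0(\gamma^{(2)})\lambda_2]$. With the symmetric choice $\lambda_1=\rho_0(\gamma^{(2)})$, $\lambda_2=\rho_0(\gamma^{(1)})$, this yields $T^{(2)}/T^{(1)}=a/b$, equivalently $T^{(1)}=bT/(a+b)$ with $T:=T^{(1)}+T^{(2)}$, so $\tilde{\gamma}(bT/(a+b))=\tilde{x}_0$ as required.

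The main obstacle is forcing genuine smoothness at $\tilde{x}_0$: the exponent vectors on the two sides of the junction must cancel exactly, not merely point along the same line. Both this cancellation and the correct time ratio are governed by the single scalar $\rho_0(\gamma^{(i)})\lambda_i$, and the symmetric choice above resolves both constraints simultaneously. The degenerate case where $\mono_0^{(1)}/a=\tilde{x}_0$ (forcing $\mono_0^{(2)}/b=\tilde{x}_0$ by the balance condition) reduces to a constant ``segment'', which satisfies the theorem vacuously; non-generic broken lines are handled by passing to the limit in the approximating sequences built into \thref{def:broken} and \thref{key_lemma}.
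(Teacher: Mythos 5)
Your Case 1 argument (generic broken lines) is correct and matches the paper's proof essentially line for line: apply \thref{key_lemma} to $\gamma^{(1)}$ with $(a,b)$ and $\lambda_1=\rho_0(\gamma^{(2)})$, to $\gamma^{(2)}$ with $(b,a)$ and $\lambda_2=\rho_0(\gamma^{(1)})$, reverse the second segment via \thref{lem:reverse}, and glue at $\tilde{x}_0$. Your observation that the balancing condition combined with the symmetric choice of $\lambda_i$ forces exact cancellation $\widetilde{\mono}_0^{(1)}=-\widetilde{\mono}_0^{(2)}$ (so $\tilde{x}_0$ lies in the interior of a single domain of linearity, not a spurious kink) is precisely what the paper uses when it defines the merged domain $L_0$ connecting $\tilde{x}_1^{(1)}$ and $\tilde{x}_1^{(2)}$. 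The time computation via \thref{lem:tildetime} is also the same.

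However, your treatment of the non-generic case is too glib. You write that it is ``handled by passing to the limit in the approximating sequences,'' but this masks a genuine obstacle. Applying \thref{key_lemma} to $\gamma^{(1)}$ and $\gamma^{(2)}$ separately produces approximating sequences $(\eta_k^{(1)})_k$ and $(\eta_k^{(2)})_k$ whose endpoints $\eta_k^{(1)}(0)$ and $\eta_k^{(2)}(0)$ need not coincide for finite $k$ (the construction of \thref{key_lemma}, Case 2 builds $\eta_k$ starting from $\tilde{x}_{s;k}$ and working forward, and even perturbs the final endpoint slightly to ensure genericity). Since a non-generic broken line segment must, by definition, be exhibited as a limit of a single sequence of generic broken line segments, one cannot simply glue $\eta_k^{(1)}$ with the reverse of $\eta_k^{(2)}$ term by term. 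The paper's Case 2 resolves this by constructing a modified sequence $\zeta_k$: rather than using $\eta_k^{(2)}$ directly, one extends $\eta_k^{(1)}$ by traveling parallel to $\tilde{\gamma}^{(2)}$ through interpolated points $w_{i;k}$ chosen on the segments joining $y_{i;k}^{(2)}$ to $\tilde{x}_i^{(2)}$, ensuring these new bending points avoid $\Sing(\scat)$. This parallel-transport step is necessary and nontrivial; you should supply it (or at least signal that simple gluing of the two sequences fails and a new sequence must be built).

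A minor point on the degenerate case: you assert that $\mono_0^{(1)}/a=\tilde{x}_0$ ``reduces to a constant segment.'' That is the eventual conclusion, but it relies on Case 1 of \thref{algorithm_1} (that $\tilde{x}_0=\mono_0/a$ forces $s=0$, hence $I(\gamma^{(\epsilon)})=\mono_0^{(\epsilon)}$); your balance-condition argument establishes only that both broken lines have their \emph{final} slopes proportional to $\tilde{x}_0$, not immediately that they have no bends. Worth a sentence citing that step.
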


\begin{proof}
For $\epsilon \in \{ 1, 2 \}$ let 
$ \lrc{\wall^{(\epsilon)}_{{s_{\epsilon}}_j}}, \dots , \lrc{\wall^{(\epsilon)}_{1_j}}$ 
be the walls in which  $\gamma^{(\epsilon)}$ bends; 
$ L^{(\epsilon)}_{s_{\epsilon}}, \dots ,L^{(\epsilon)}_{0} $ the domains of linearity of $\gamma^{(\epsilon)}$ and 
$ x^{(\epsilon)}_{s_{\epsilon}}, \dots , x^{(\epsilon)}_0$
the bending points. Since we will consider many broken lines we will denote by $ \mono_i(\eta)$ the exponent vector associated to the $i^{\text{th}}$ domain of linearity of the broken line (segment) $\eta$. The case in which $\frac{\mono_0(\gamma^{(1)})}{a} = \tilde{x}_0$ or $\frac{\mono_0(\gamma^{(2)})}{b} = \tilde{x}_0$ reduces to \thref{key_lemma}. So let us assume we are not in these cases.

\textbf{Case 1}: Suppose $\gamma^{(1)} $ and $ \gamma^{(2)}$ are generic. We apply \thref{key_lemma} with $ \lambda = \rho_0(\gamma^{(2)})$ to construct a broken line segment $\tilde{\gamma}^{(1)}:[\tau_1,0] \to M^{\circ}_{\R}$ with the following properties:
\begin{enumerate}
    \item $\tilde{\gamma}^{(1)}(0)= \tilde{x}_0$;
    \item $\tilde{\gamma}^{(1)}(\tau_1)= \dfrac{I(\gamma^{(1)})}{a}$;
    \item \label{mono01}$\mono_0(\tilde{\gamma}^{(1)})=a(a+b)\rho_0(\gamma^{(1)})\rho_0(\gamma^{(2)})\left(\dfrac{\mono_0(\gamma^{(1)})}{a}-\tilde{x}_0\right)$.
\end{enumerate}

Similarly, we use \thref{key_lemma} with $ \lambda = \rho_0(\gamma^{(1)})$ to construct a broken line segment ${\tilde{\gamma}^{(2)}:[\tau_2,0] \to M^{\circ}_{\R}}$ with the following properties:
\begin{enumerate}
    \item $\tilde{\gamma}^{(2)}(0)= \tilde{x}_0$;
    \item $\tilde{\gamma}^{(2)}(\tau_2)= \dfrac{I(\gamma^{(2)})}{b}$; 
    \item \label{mono02}$\mono_0(\tilde{\gamma}^{(2)})=b(a+b)\rho_0(\gamma^{(1)})\rho_0(\gamma^{(2)})\left(\dfrac{\mono_0(\gamma^{(2)})}{b}-\tilde{x}_0\right)$.
\end{enumerate}

We claim that the broken line segments $\tilde{\gamma}^{(1)} $ and $\tilde{\gamma}^{(2)}$  can be glued together to obtain the broken lines segment $\tilde{\gamma}$ connecting $\dfrac{I(\gamma^{(1)})}{a}$ with $\dfrac{I(\gamma^{(2)})}{b}$.
The idea is that we can extend $ \tilde{\gamma}^{(1)}$ by traveling along $\tilde{\gamma}^{(2)}$ in the opposite direction (using \thref{lem:reverse}) in order to hit $\dfrac{I(\gamma^{(1)})}{a}$. With this in mind let us first describe the support of $\tilde{\gamma} $. 
Since the pair $(\gamma^{(1)},\gamma^{(2)})$ is balanced, we have that $\mono_0(\gamma^{(1)})+\mono_0(\gamma^{(2)})=x_0$. 
This implies that $\mono_0(\tilde{\gamma}^{(1)})=-\mono_0(\tilde{\gamma}^{(2)})$. 
In particular, the three different points $\tilde{x}^{(1)}_1,\tilde{x}_0$ and $\tilde{x}^{(2)}_1$ are co-linear. 
Let $L_0$ be the line segment whose endpoints are $\tilde{x}^{(1)}_1 $and $\tilde{x}^{(2)}_1$. Let
\[
\text{Supp}(\tilde{\gamma}):=\bigcup_{i=1}^{s_1}\tilde{L}^{(1)}_{i}\cup L_0 \cup \bigcup_{i=1}^{s_2}\tilde{L}^{(2)}_{i}.
\]
We now attach a monomial to every domain of linearity in $ \Supp(\tilde{\gamma})$. 
Let $ c_{\tilde{L}^{(1)}_i}z^{\mono_i(\tilde{\gamma}_1)}$ be the monomial of $\tilde{\gamma}^{(1)}$ associated to $\tilde{L}^{(1)}_i $ for $ 0 \leq i \leq s_1$. 
Similarly, let $ c_{\tilde{L}^{(2)}_i}z^{\mono_i(\tilde{\gamma}_i)}$ be the monomial of $ \tilde{\gamma}^{(2)}$ associated to $\tilde{L}^{(2)}_i $ for $1 \leq i \leq s_2$. 
\begin{itemize}
    \item  The monomial of $ \tilde{\gamma}$ attached to $\tilde{L}^{(1)}_i$ for $ i\in \{ 1,\dots , s_1\}$ is the same as the one attached to $\tilde{\gamma}^{(1)}$:
\[
c_{\tilde{L}^{(1)}_i}z^{\mono_i(\tilde{\gamma}^{(1)})}.
\]
\item The monomial of $ \tilde{\gamma}$ attached to $L_0$ is also the same as the one attached to $ \tilde{\gamma}^{(1)}$ in $\tilde{L}_0^{(1)}$: 
\[
c_{\tilde{L}^{(1)}_0}z^{\mono_0(\tilde{\gamma}^{(1)})}
\] 
\item The monomial of $ \tilde{\gamma}$ attached to the domain of linearity $\tilde{L}^{(2)}_i$ for $i\in \{ 1,\dots , s_2\}$ is of the form
\[
c_{i}z^{-\mono_i(\tilde{\gamma}^{(2)})}
\]
for some $c_i \in \Z_{>0}$. More precisely, observe that the numbers $c_1, \dots , c_{s_2}$ are completely determined by specifying that  $c_{\tilde{L}^{(2)}_0}z^{\mono_0(\tilde{\gamma}^{(2)})}$ is the monomial of $\tilde{\gamma}$ attached to $L_0$ and the fact that broken line segments can be traversed in both directions (see \thref{lem:reverse} and \thref{determined}). 
So we travel along $\tilde{\gamma}^{(2)}$ is opposite direction with this new monomial associated to $\tilde{L}^{(2)}_0$.
\end{itemize}
This information completely describes 
the broken line segment $\tilde{\gamma}:[0,T] \to M^{\circ}_{\R}$. 
It only remains to show that for this choice of parametrization the equation 
\[
\tilde{\gamma}\lrp{\frac{b}{a+b} T}= \tilde{x}_0
\]
holds. Let $\tilde{\gamma}^{(1)}:[\tau_1, 0]\to M^{\circ}_{\R}$ and $\tilde{\gamma}^{(2)}:[\tau_2, 0]\to M^{\circ}_{\R}$ be parametrizations for $\tilde{\gamma}^{(1)}$ and $\tilde{\gamma}^{(2)}$. The parametrizations we have chosen imply that 
\eqn{\tilde{\gamma}(t) = \begin{cases}
\tilde{\gamma}^{(1)}\lrp{\tau_1+t}  & \text{for }t\in 
\lrb{0,-\tau_1}\\
\tilde{\gamma}^{(2)}\lrp{-\tau_1-t}  & \text{for }t\in 
\lrb{-\tau_1,T}.
\end{cases}
}
In particular, $\tilde{\gamma}\lrp{-\tau_1} = \frac{x_0}{a+b}$ and $T= -\tau_1-\tau_2$.
We need to verify that $-\tau_1 = \frac{b}{a+b}T$. 
On the one hand $ \mono_0(\tilde{\gamma}^{(1)})= a(a+b)\rho_0(\gamma^{(1)})\rho_0(\gamma^{(2)}) \left( \dfrac{\mono_0(\gamma^{(1)})}{a}-\tilde{x}_0\right)
$.
On the other hand, \thref{lem:tildetime} applied to $\tilde{\gamma}^{(1)}$ asserts that $\left( \dfrac{\mono_0(\gamma^{(1)})}{a}-\tilde{x}_0\right)=-\tau_1 \mono_0(\tilde{\gamma}^{(1)})$.
Since $ \dfrac{\mono_0(\gamma^{(1)})}{a}\neq \tilde{x}_0$ we have that
\[
-\tau_1=\dfrac{1}{a(a+b)\rho_0(\gamma^{(1)})\rho_0(\gamma^{(2)})}.
\] 
An analogous computation gives 
\[
-\tau_2=\dfrac{1}{b(a+b)\rho_0(\gamma^{(1)})\rho_0(\gamma^{(2)})}.
\] 
Together these facts give $\dfrac{-\tau_1}{T}=\dfrac{-\tau_1}{-\tau_1-\tau_2} = \dfrac{b}{a+b}$ as desired.

\textbf{Case 2}: Suppose $\gamma^{(1)} $ or $ \gamma^{(2)}$ is non-generic. 
We treat here the case in which both are non-generic, from which it is easy to recover the case in which only one is non-generic.
We use \thref{key_lemma} with $ \lambda = \rho_0(\gamma^{(2)})$ to construct a non-generic broken line segment $\tilde{\gamma}^{(1)}$, together with the required sequence of broken line segments $(\eta^{(1)}_k)_{k \in \N}$ whose limit is $\tilde{\gamma}^{(1)}$. 
Similarly, we use \thref{key_lemma} with $ \lambda = \rho_0(\gamma^{(1)})$ to construct a non-generic broken line segment $\tilde{\gamma}^{(2)}$ and a sequence of broken line segments $(\eta^{(2)}_k)_{k \in \N}$ limiting to $\tilde{\gamma}^{(2)}$.
Our choices of $\lambda$ were taken to ensure that $\mono_0(\tilde{\gamma}^{(1)})=-\mono_0(\tilde{\gamma}^{(2)}) $. 
Now we would like to proceed as in Case 1 and glue the broken line segments $\tilde{\gamma}_1$ and $\tilde{\gamma}_2$ to produce the desired broken line segment $\tilde{\gamma}$.
Since $\mono_0(\tilde{\gamma}^{(1)})=-\mono_0(\tilde{\gamma}^{(2)}) $ we have that $\Supp(\tilde{\gamma}^{(1)})$ and $\Supp(\tilde{\gamma}^{(2)})$ can indeed be glued to obtain $\Supp(\tilde{\gamma})$. 
However, we still have to define a sequence of broken line segments $(\zeta_k)_{k \in \N}$ whose limit is $\tilde{\gamma}$.
Once again it would be desirable to proceed as in Case 1 and glue the broken line segments of the sequence $(\eta^{(1)}_k)_{k \in \N}$ with those of the sequence $(\eta^{(2)}_k)_{k \in \N}$ to obtain $(\zeta_k )_{k\in \N}$.
The problem that arises here is that the endpoint of $\eta^{(1)}_k$ might not coincide with the endpoint of $\eta^{(2)}_k$ preventing us from gluing these broken line segments. The idea to overcome this problem is to extend $\eta^{(1)}_k$ by running parallel to $\eta^{(2)}_k $ in order to construct $\zeta_k$.

Let $y^{(2)}_{i;k} $ be the intersection point of $\eta_k^{(2)}$ with the walls $\lrc{\wall_{i_j}^{(2)}}_{j\in \mathfrak{J}_i^{(2)}}$ for $i \in \{ 1, \dots , s_2\}$ and set ${y^{(\epsilon)}_{0;k}:=  \eta^{(\epsilon)}_k(0) }$ for $\epsilon \in \{ 1,2\}$.
Observe that  $\tilde{x}_0,y^{(1)}_{0;k}$ and $ y^{(2)}_{0;k}$ are colinear. 
To see this we need to focus our attention on the sequence of pairs $(\gamma^{(1)}_{k}$,$\gamma^{(2)}_{k})_{k \in \N}$ of broken lines balanced near $x_0$ converging to $(\gamma^{(1)}, \gamma^{(2)})$. 
By construction both $y^{(1)}_{0;k}$ and $ y^{(2)}_{0;k}$ lie in the line segment whose endpoints are $\tilde{x}_0$ and $\dfrac{\gamma^{(1)}_k(0)}{a+b}=\dfrac{\gamma^{(2)}_k(0)}{a+b}$. 
So in the problematic case $y^{(1)}_{0;k} \neq y^{(2)}_{0;k} $ we can assume without loss of generality that $ y^{(1)}_{0;k}$ is closer to $\tilde{x}_0$ than $ y^{(2)}_{0;k}$.
So by construction the line passing through $ y^{(1)}_{0;k}$ with slope $\mono_0(\tilde{\gamma}^{(1)}) $ intersects the line segment whose endpoints are  $y^{(2)}_{1;k}$ and $ \tilde{x}^{(2)}_1$ in a unique point $w_{1;k}$. Moreover, we see that
$ w_{1;k} \in \wall_{1_j}^{(2)} \setminus \lrp{\Sing(\scat)\cap \wall_{1_j}^{(2)}}$ (using the same argument as in Case 2 of \thref{key_lemma}).
Assume that for $i \in \{ 1, \dots , s_{2-1} \}$ we have defined a point $w_{i;k}\in \wall^{(2)}_{i_j}\setminus \lrp{\Sing(\scat)\cap\wall^{(2)}_{i_j}}$ contained in the line segment whose endpoints are $ y^{(2)}_{i;k}$ and $ \tilde{x}^{(2)}_i$. 
We define $w_{i+1;k}$ to be the intersection point of the line with slope $\mono_i(\tilde{\gamma}^{(2)})$ passing through $ w_{i;k} $ and the line segment whose endpoints are $y^{(2)}_{i+1;k}$ and $\tilde{x}^{(2)}_{i+1} $.
Finally, the ray $w_{s_2;k}+\R_{\geq 0}\mono_{s_2}(\tilde{\gamma}^{(2)})$ is parallel to to the domain of linearity $\tilde{L}^{(2)}_{s_2}$ of $\tilde{\gamma}^{(2)}$. 
Therefore, we can choose a point $w_{s_2+1;k}$ in this ray whose distance to $\dfrac{\mono_{s_2}(\tilde{\gamma}^{(2)})}{b}$ is less or equal to the distance from $w_{s_2;k}$ and $ \tilde{x}_{s_2}^{(2)}$.

We are ready to describe $\Supp(\zeta_k) $. 
For $i \in \{1, \dots , s_2 \}$ let $L_{2,i}^{(\zeta_k)} $ be the line segment whose endpoints are $ w_{i;k}$ and $w_{i+1;k}$. 
For $i\in \{ 1, \dots , s_1 \}$ let $L_{1,i}^{(\zeta_k)}$ coincide with the $i^{\text{th}}$ domain of linearity  $L^{(\eta_k^{(1)})}_i$ of $\eta_k^{(1)}$. Let $L_0^{(\zeta_k)}$ be the line segment whose endpoints are $ w_{1;k}$ and the intersection point of $\eta^{(1)}_k $ with $\wall^{(1)}_{1_j}$. Define
\[
\Supp(\zeta_k)= \bigcup_{i=1}^{s_1} L_{1,i}^{(\zeta_k)} \cup L_0^{(\zeta_k)} \cup \bigcup_{i=1}^{s_2} L_{2,i}^{(\zeta_k)}.
\]
By construction, every bending of $\Supp(\zeta_k)$ is allowed. The monomial associated to $L^{(\zeta_k)}_{1,i} $ is the same as the monomial associated to the domain of linearity $L^{(\eta_k^{(1)})}_i$ for $\eta_k^{(1)}$ for $i \in \{ 1, \dots , s_1\}$. 
The monomial associated to $L_0^{(\zeta_k)} $ is the same as the monomial associated to the domain of linearity $L^{(\eta_k^{(1)})}_0$ for $\eta_k^{(1)}$. Since the bendings concerning the domains of linearity of the form $\bigcup_{i=1}^{s_2} L_{2,i}^{(\zeta_k)}$ are allowed, their monomials are completely  determined by the the monomial associated to $L_0^{(\zeta_k)}$.
If we have a parametrization $\tilde{\gamma}:[0,T]\to M^{\circ}_{\R} $ the fact $ \tilde{\gamma}\left( \dfrac{b}{a+b}T\right)= \tilde{x}_0$ follows from Case 1 applied by to parametrizations of the form $\zeta_k:[0,T_k]\to M^{\circ}_{\R}$ (notice that we must have $T_k$ converge to $T$).
\end{proof}

\section{Constructing balanced broken lines from a broken line segment}\label{sec:in_alg}

In the previous section we began with a pair of broken lines $(\gamma^{(1)},\gamma^{(2)})$ \red{balanced at $x_0 \in M^\circ$} and a pair of integers $(a,b)$.
From this data, we constructed a broken line segment
$\tilde{\gamma}$ connecting $\dfrac{p}{a}$ to $\dfrac{q}{b}$, and passing through $\dfrac{x_0}{a+b}$, where $I(\gamma^{(1)}) = p$ and  $I(\gamma^{(2)}) = q$. 
In this section we instead start with 
\begin{itemize}
    \item a broken line segment $\tilde{\gamma}=\lrp{\Par(\tilde{\gamma});c_s,\dots,c_0}$ parametrized as $\tilde{\gamma}:[0,T] \to M^\circ_\R$ and having endpoints $\tilde{p}:=\tilde{\gamma}(0)$ and $\tilde{q}:= \tilde{\gamma}(T)$ in $M^\circ_{\Q}$, and
    \item a rational point $\tilde{r}:= \tilde{\gamma}(\tau) \in M^\circ_{\Q}$ for some $\tau\in \lrp{0,T}\cap \Q$. 
\end{itemize}
In this case we will construct a balanced pair of broken lines
$(\gamma^{(1)}, \gamma^{(2)})$
with 
\begin{itemize}
    \item $I(\gamma^{(1)})= a \tilde{p}$, $I(\gamma^{(2)})= b \tilde{q}$ for a pair of positive integers $a$, $b$ satisfying 
    \eq{\frac{b}{a+b}= \frac{\tau}{T},}{eq:t0}
    \item $\mono_0(\gamma^{(1)}) + \mono_0(\gamma^{(2)}) = \lrp{a+b}\tilde{r} $, and
    \item $\gamma^{(1)}(0) = \gamma^{(2)}(0) = \lrp{a+b}\tilde{r}$.  
\end{itemize}
Roughly speaking, at the level of supports, 
this ``inverts'' the construction of \thref{prop:bl2jp}.
We say {\it{roughly}} and put scare quotes around {\it{inverts}} because composition of these constructions will 
radially translate the base point $\gamma^{(1)}(0)=\gamma^{(2)}(0)$ (or equivalently, apply a dilation to $\Supp(\gamma^{(\epsilon)})$). 
 
From now on we fix a broken line segment $\tilde{\gamma}:[0,T]\to M^{\circ}_{\R}$ with endpoints $\tilde{\gamma}(0)=\tilde{p}$ and $\tilde{\gamma}(T)=\tilde{q}$, 
and let $\tilde{r}$ be a rational point in the interior of this broken line segment. 
Moreover, for non-generic $\tilde{\gamma}$ we fix a sequence  $(\tilde{\gamma}_k)_{k \in \N}$ of generic broken line segments limiting to $\tilde{\gamma}$, as in \thref{def:broken}.
As in \thref{not:numbering}, we let $\tilde{x}_1, \dots , \tilde{x}_s$ be the bending points of $\tilde{\gamma}$
and denote the time of this bend by $t_i$, {\it{i.e.}} $\tilde{\gamma}(t_i) = \tilde{x}_i$.
Recall that if $\tilde{\gamma}$ is non-generic, we could have $x_i=x_j$ and $t_i=t_j$ for distinct $i$ and $j$.
A depiction of the generic case is provided in Figure~\ref{fig:SegNotationNew}.

\noindent
\begin{minipage}{\linewidth}
\captionsetup{type=figure}
\begin{center}
\begin{tikzpicture}

    \node [inner sep=0] (0) at (0,0) {$\bullet$};

    \draw [name path=walls1] (0)--++(150:5) node [above left] {$\lrc{\wall_{1_j}}$}; 
    \draw [name path=walls1m1] (0)--++(130:5) node [above] {$\lrc{\wall_{2_j}}$}; 
    \draw [name path=wall11] (0)--++(100:5) node [above] {$\lrc{\wall_{k_j}}$}; 
    \draw[->] (0)--++(100:4.5) node [right] {$\ray_k $ }; 
    \draw [name path=wall12] (0)--++(75:5) node [above] {$\lrc{\wall_{{k+1}_j}}$}; 
    \draw [name path=walls2m1] (0)--++(40:5)node [above right] {$\lrc{\wall_{{s-1}_j}}$}; 
    \draw [name path=walls2] (0)--++(20:5)node [right] {$\lrc{\wall_{s_j}}$}; 

    \path (130:4.8)--(100:4.8) node [midway, sloped] {$\cdots$};
    \path (75:4.8)--(40:4.8) node [midway, sloped] {$\cdots$};

    \node [inner sep=0] (p) at (170:4) {$\bullet$}; 
    \node [inner sep=0] (q) at (5:4.35) {$\bullet$}; 

    \path [name path=seg1] (p) --++(30:2);
    \path [name intersections={of=seg1 and walls1,by=xs1}];

    \path [name path=seg2] (xs1) --++(75:2);
    \path [name intersections={of=seg2 and walls1m1,by=xs1m1}];

    \path [name path=seg3] (xs1m1) --++(40:3.5);
    \path [name intersections={of=seg3 and wall11,by=x11}];

    \coordinate (l1) at ($(xs1m1)!.25!(x11)$);
    \coordinate (l2) at ($(xs1m1)!.75!(x11)$);

    \path [name path=seg4] (x11) --++(-10:3.5);
    \path [name intersections={of=seg4 and wall12,by=x12}];

    \path [name path=seg5] (x12) --++(-45:5);
    \path [name intersections={of=seg5 and walls2m1,by=xs2m1}];

    \coordinate (r1) at ($(x12)!.25!(xs2m1)$);
    \coordinate (r2) at ($(x12)!.75!(xs2m1)$);

    \path [name path=seg6] (xs2m1) --++(-30:3.5);
    \path [name intersections={of=seg6 and walls2,by=xs2}];

    \node [inner sep=0] (x0) at ($(x11)!.65!(x12)$) {$\bullet$};

    \draw[thick, color=magenta]  (p)--(xs1) node [midway, sloped, above] {$\tilde{L}_{0}$}--(xs1m1) node [pos=.6, sloped, above] {$\tilde{L}_{1}$}--(l1);
    \path (l1)--(l2) node [midway, sloped, color=magenta] {$\cdots$};
    \draw[thick, color=magenta]  (l2)--(x11)--(x0);
    \draw[thick, color=magenta] (x0)--(x12)--(r1);
    \path (x11)--(x12) node [midway, sloped, below, color=magenta] {$\tilde{L}_k$};
    \path (r1)--(r2) node [midway, sloped, color=magenta] {$\cdots$};
    \draw[thick, color=magenta]  (r2)--(xs2m1)--(xs2) node [pos=.3, sloped, below] {$\tilde{L}_{s-1}$}--(q) node [pos=.6, left] {$\tilde{L}_{s}$};
        
    \path (p)--++(-160:.3) node {$\tilde{p}$};
    \path (q)--++(-10:.3) node {$\tilde{q}$};
    \path (xs1)--++(-90:.4) node {$\tilde{x}_{1}$};
    \path (xs1m1)--++(-5:.5) node {$\tilde{x}_{2}$};
    \path (x11)--++(165:.4) node {$\tilde{x}_{k}$};
    \path (x0)--++(90:.3) node {$\tilde{r}$};
    \path (x12)--++(10:.55) node {$\tilde{x}_{k+1}$};
    \path (xs2m1)--++(5:.7) node {$\tilde{x}_{s-1}$};
    \path (xs2)--++(-15:.5) node {$\tilde{x}_{s}$};

    \node [circle, fill, inner sep=1pt] at (xs1) {};
    \node [circle, fill, inner sep=1pt] at (xs1m1) {};
    \node [circle, fill, inner sep=1pt] at (x11) {};
    \node [circle, fill, inner sep=1pt] at (x12) {};
    \node [circle, fill, inner sep=1pt] at (xs2m1) {};
    \node [circle, fill, inner sep=1pt] at (xs2) {};

\end{tikzpicture}


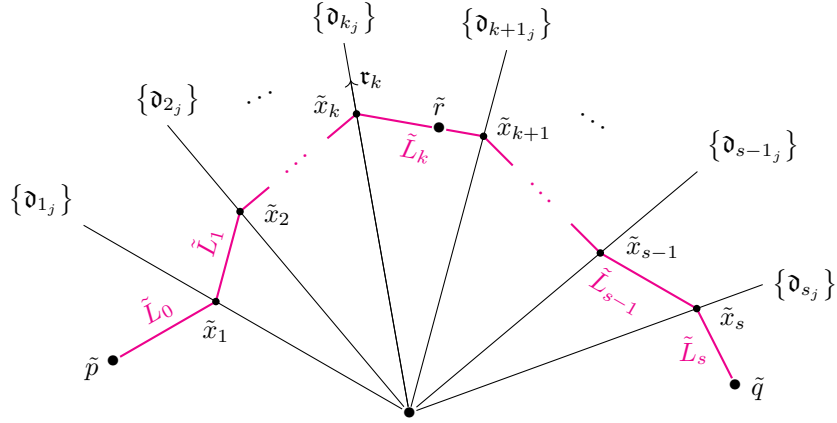
\captionof{figure}{\label{fig:SegNotationNew} Broken line segment $\tilde{\gamma}$ from $\tilde{p}$ to $\tilde{q}$, with interior point $\tilde{r}$.  Depicted for generic $\tilde{\gamma}$ and $\tilde{r}$.
} 

\end{center}
\end{minipage}

We will use the broken line sub-segment of $\tilde{\gamma}$ having endpoints $\tilde{p}$ and $\tilde{r}$ to construct a broken line $\gamma^{(1)}$ with initial exponent vector a multiple of $\tilde{p}$, 
and we will use the broken line sub-segment of $\tilde{\gamma}$ having endpoints $\tilde{r}$ and $\tilde{q}$ to construct a broken line $\gamma^{(2)}$ with initial exponent vector a multiple of $\tilde{q}$.
The indexing in Figure~\ref{fig:SegNotationNew} will be a bit awkward for describing this pair, so we introduce the following relabelling.

\noindent
\begin{minipage}{\linewidth}
\captionsetup{type=figure}
\begin{center}
\begin{tikzpicture}

    \node [inner sep=0] (0) at (0,0) {$\bullet$};

    \draw [name path=walls1] (0)--++(150:5) node [above left] {$\lrc{\wall_{{s_1}_j}^{(1)}}$}; 
    \draw [name path=walls1m1] (0)--++(130:5) node [above] {$\lrc{\wall_{{s_1-1}_j}^{(1)}}$}; 
    \draw [name path=wall11] (0)--++(100:5) node [above] {$\lrc{\wall_{1_j}^{(1)}}$};  \draw[->] (0)--++(100:4.5);
    \node at (95:4.55) {$\ray_1^{(1)} $ }; 
    \draw [name path=wall12] (0)--++(75:5) node [above] {$\lrc{\wall_{1_j}^{(2)}}$}; 
    \draw [name path=walls2m1] (0)--++(40:5)node [above right] {$\lrc{\wall_{{s_2-1}_j}^{(2)}}$}; 
    \draw [name path=walls2] (0)--++(20:5)node [right] {$\lrc{\wall_{{s_2}_j}^{(2)}}$}; 

    \path (130:4.8)--(100:4.8) node [midway, sloped] {$\cdots$};
    \path (75:4.8)--(40:4.8) node [midway, sloped] {$\cdots$};

    \node [inner sep=0] (p) at (170:4) {$\bullet$}; 
    \node [inner sep=0] (q) at (5:4.35) {$\bullet$}; 

    \path [name path=seg1] (p) --++(30:2);
    \path [name intersections={of=seg1 and walls1,by=xs1}];

    \path [name path=seg2] (xs1) --++(75:2);
    \path [name intersections={of=seg2 and walls1m1,by=xs1m1}];

    \path [name path=seg3] (xs1m1) --++(40:3.5);
    \path [name intersections={of=seg3 and wall11,by=x11}];

    \coordinate (l1) at ($(xs1m1)!.25!(x11)$);
    \coordinate (l2) at ($(xs1m1)!.75!(x11)$);

    \path [name path=seg4] (x11) --++(-10:3.5);
    \path [name intersections={of=seg4 and wall12,by=x12}];

    \path [name path=seg5] (x12) --++(-45:5);
    \path [name intersections={of=seg5 and walls2m1,by=xs2m1}];

    \coordinate (r1) at ($(x12)!.25!(xs2m1)$);
    \coordinate (r2) at ($(x12)!.75!(xs2m1)$);

    \path [name path=seg6] (xs2m1) --++(-30:3.5);
    \path [name intersections={of=seg6 and walls2,by=xs2}];

    \node [inner sep=0] (x0) at ($(x11)!.65!(x12)$) {$\bullet$};

    \draw[thick, color=magenta]  (p)--(xs1) node [midway, sloped, above] {$\tilde{L}_{s_1}^{(1)}$}--(xs1m1) node [pos=.6, sloped, above] {$\tilde{L}_{s_1-1}^{(1)}$}--(l1);
    \path (l1)--(l2) node [midway, sloped, color=magenta] {$\cdots$};
    \draw[thick, color=magenta]  (l2)--(x11)--(x0);
    \draw[thick, color=magenta] (x0)--(x12)--(r1);
    \path (x11)--(x12) node [midway, sloped, below, color=magenta] {$\tilde{L}_0$};
    \path (r1)--(r2) node [midway, sloped, color=magenta] {$\cdots$};
    \draw[thick, color=magenta]  (r2)--(xs2m1)--(xs2) node [pos=.3, sloped, below] {$\tilde{L}_{s_2-1}^{(2)}$}--(q) node [pos=.6, left] {$\tilde{L}_{s_2}^{(2)}$};
        
    \path (p)--++(-160:.3) node {$\tilde{p}$};
    \path (q)--++(-10:.3) node {$\tilde{q}$};
    \path (xs1)--++(-90:.45) node {$\tilde{x}_{s_1}^{(1)}$};
    \path (xs1m1)--++(-5:.65) node {$\tilde{x}_{s_1-1}^{(1)}$};
    \path (x11)--++(165:.4) node {$\tilde{x}_1^{(1)}$};
    \path (x0)--++(90:.3) node {$\tilde{r}$};
    \path (x12)--++(10:.5) node {$\tilde{x}_1^{(2)}$};
    \path (xs2m1)--++(5:.75) node {$\tilde{x}_{s_2-1}^{(2)}$};
    \path (xs2)--++(-15:.6) node {$\tilde{x}_{s_2}^{(2)}$};

    \node [circle, fill, inner sep=1pt] at (xs1) {};
    \node [circle, fill, inner sep=1pt] at (xs1m1) {};
    \node [circle, fill, inner sep=1pt] at (x11) {};
    \node [circle, fill, inner sep=1pt] at (x12) {};
    \node [circle, fill, inner sep=1pt] at (xs2m1) {};
    \node [circle, fill, inner sep=1pt] at (xs2) {};

\end{tikzpicture}


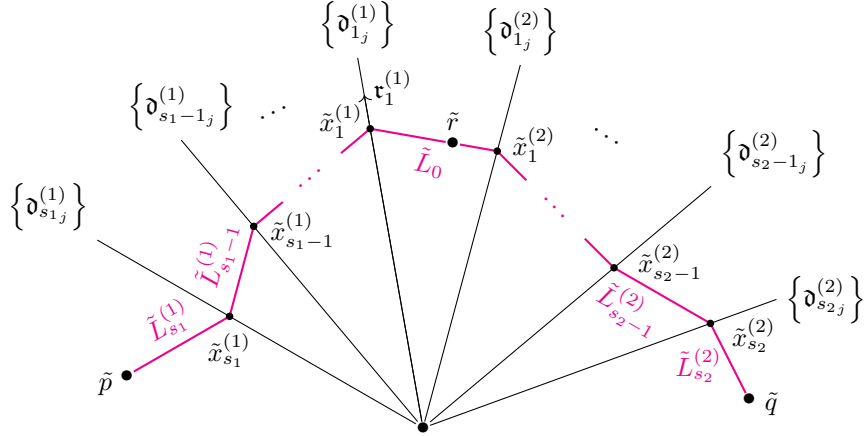
\captionof{figure}{\label{fig:SegNotation} Relabelling of $\tilde{\gamma}$ and bending walls. Compare to Figure \ref{fig:SegNotationNew}.
} 

\end{center}
\end{minipage}
In the case that $\tilde{r}$ is contained in a (not necessarily unique) bending wall, 
we index as if $\tilde{r}$ were replaced by a new point $\tilde{r}_- := \tilde{\gamma}\lrp{\tau - \delta}$ for some $\delta > 0$ sufficiently small that $\tilde{r}$ is in the closure of the domain of linearity containing $\tilde{r}_-$.

Set ${\tilde{\rho_i}}^{(\epsilon)}= \prod_{k=1}^{s_\epsilon-i}\lra{n_{0,k}^{(\epsilon)},{\widetilde{\mono}_{k}}^{(\epsilon)}}$, 
and take $a$, $b$ such that 
$a \tilde{p} \in {\tilde{\rho}_{0}}^{(1)} M^\circ $ and 
$b \tilde{q} \in {\tilde{\rho}_{0}}^{(2)} M^\circ $.
If $\tilde{r}$ lies on a bending wall, we associate the bending wall with side ${}^{(2)}$.

We start by defining the exponent vectors.
We set 
\begin{enumerate}
    \item \label{epsilon1} $\mono_{i}^{(1)}:= a \lrp{\tilde{x}_i^{(1)}+ t_i^{(1)} \widetilde{\mono}_i^{(1)} }$
    \item \label{epsilon2} $\mono_{i}^{(2)}:= b \lrp{\tilde{x}_i^{(2)} - \lrp{T - t_i^{(2)} }\widetilde{\mono}_i^{(2)} }$,
\end{enumerate}
where for the index $0$ we take, {\it{e.g.}} 
$\tilde{x}_0^{(1)}=\tilde{x}_1^{(2)}$ and 
$\tilde{x}_0^{(2)}=\tilde{x}_1^{(1)}$. 

\begin{proposition} \label{prop:velocities}
We have $\mono_{s_1}^{(1)} = a \tilde{p}$, $\mono_{s_2}^{(2)} = b \tilde{q}$, and $\mono_0^{(1)}+\mono_0^{(2)} = \lrp{a+b}\tilde{r}$.
\end{proposition}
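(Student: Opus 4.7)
The proof is essentially a computation that unpacks the definitions of $\mono_i^{(\epsilon)}$ using the fact that the broken line segment $\tilde{\gamma}$ moves with velocity $-\widetilde{\mono}_L$ in each domain of linearity $L$ (per \thref{def:genbroken}). I would break it into three steps corresponding to the three claims.

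\medskip

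\noindent\emph{Step 1: $\mono_{s_1}^{(1)} = a\tilde{p}$.} In the domain $\tilde{L}_{s_1}^{(1)}$ containing $\tilde{p}$, the parametrization is
\eqn{\tilde{\gamma}(t) = \tilde{p} - t\, \widetilde{\mono}_{s_1}^{(1)}, \qquad t \in [0, t_{s_1}^{(1)}].}
Evaluating at $t=t_{s_1}^{(1)}$ gives $\tilde{x}_{s_1}^{(1)} = \tilde{p} - t_{s_1}^{(1)}\widetilde{\mono}_{s_1}^{(1)}$, so $\tilde{p} = \tilde{x}_{s_1}^{(1)} + t_{s_1}^{(1)}\widetilde{\mono}_{s_1}^{(1)}$ and thus $a\tilde{p} = \mono_{s_1}^{(1)}$.

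\medskip

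\noindent\emph{Step 2: $\mono_{s_2}^{(2)} = b\tilde{q}$.} Identical, working in the domain $\tilde{L}_{s_2}^{(2)}$ which contains $\tilde{q} = \tilde{\gamma}(T)$. The parametrization there is $\tilde{\gamma}(t) = \tilde{x}_{s_2}^{(2)} - (t - t_{s_2}^{(2)})\widetilde{\mono}_{s_2}^{(2)}$, so evaluating at $t=T$ gives $\tilde{q} = \tilde{x}_{s_2}^{(2)} - (T - t_{s_2}^{(2)})\widetilde{\mono}_{s_2}^{(2)}$.

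\medskip

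\noindent\emph{Step 3: $\mono_0^{(1)} + \mono_0^{(2)} = (a+b)\tilde{r}$.} This is the main point. Using the conventions $\tilde{x}_0^{(1)} = \tilde{x}_1^{(2)}$, $t_0^{(1)} = t_1^{(2)}$, $\tilde{x}_0^{(2)} = \tilde{x}_1^{(1)}$, $t_0^{(2)} = t_1^{(1)}$, and $\widetilde{\mono}_0^{(1)} = \widetilde{\mono}_0^{(2)} = \widetilde{\mono}_0$ (the common slope in $\tilde{L}_0$), write out
\eqn{
\mono_0^{(1)} + \mono_0^{(2)}
= a\tilde{x}_1^{(2)} + b\tilde{x}_1^{(1)} + \left[a\,t_1^{(2)} - b(T-t_1^{(1)})\right]\widetilde{\mono}_0.
}
On the other hand, $\tilde{r}$ lies in the line segment $\tilde{L}_0$, so we have the two parametric expressions
\eqn{\tilde{r} = \tilde{x}_1^{(1)} - (\tau - t_1^{(1)})\widetilde{\mono}_0 = \tilde{x}_1^{(2)} + (t_1^{(2)} - \tau)\widetilde{\mono}_0.}
Summing $a$ copies of the second and $b$ copies of the first yields
\eqn{
(a+b)\tilde{r} = a\tilde{x}_1^{(2)} + b\tilde{x}_1^{(1)} + \left[a\,t_1^{(2)} + b\,t_1^{(1)} - (a+b)\tau\right]\widetilde{\mono}_0.
}
The two right-hand sides agree iff the coefficients of $\widetilde{\mono}_0$ match, which simplifies to $(a+b)\tau = bT$, i.e.\ the defining constraint $\tfrac{b}{a+b} = \tfrac{\tau}{T}$ from \eqref{eq:t0}.

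\medskip

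The only subtle point I expect is bookkeeping around the index $0$ when $\tilde{r}$ lies on a bending wall. The convention of replacing $\tilde{r}$ by $\tilde{r}_-$ already pins down which domain plays the role of $\tilde{L}_0$ and hence which slope $\widetilde{\mono}_0$ is used; once this is fixed the computation in Step~3 goes through verbatim. No limiting argument or further input from the non-generic structure is needed here, since Steps 1--3 only use the linear behavior in a single domain at a time together with the global constraint on $(a,b)$.
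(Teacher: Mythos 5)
Your proof is correct and follows essentially the same computation as the paper's: all three identities are obtained by unpacking the definitions of $\mono_i^{(\epsilon)}$ using the velocity convention $\tilde{\gamma}'=-\widetilde{\mono}_L$ on each domain of linearity, with the third identity reducing to the constraint $(a+b)\tau=bT$ from \eqref{eq:t0}. The only cosmetic difference is that you expand both $\mono_0^{(1)}+\mono_0^{(2)}$ and $(a+b)\tilde{r}$ separately and match coefficients of $\widetilde{\mono}_0$, whereas the paper massages the left-hand side directly into $a\tilde{r}+b\tilde{r}$; the content is identical.
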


\begin{proof}
Since $\widetilde{\mono}_{s_\epsilon}^{(\epsilon)}$ is the negative of the velocity of $\tilde{\gamma}'$ along $\tilde{L}_{s_\epsilon}^{(\epsilon)}$,
we have:
\eqn{\mono_{s_1}^{(1)} &= a \lrp{\tilde{x}_{s_1}^{(1)} +t_{s_1}^{(1)} \widetilde{\mono}_{s_1}^{(1)} }\\
&= a \tilde{p}.}
Similarly,
\eqn{\mono_{s_2}^{(2)} &= b \lrp{\tilde{x}_{s_2}^{(2)} -\lrp{T- t_{s_2}^{(2)}} \widetilde{\mono}_{s_2}^{(2)} }\\
&= b \tilde{q}.}
Next, we compute:
\eqn{
\mono_0^{(1)}+\mono_0^{(2)} &\stackrel{\phantom{\eqref{eq:t0}}}{=} 
a \lrp{\tilde{x}_{1}^{(2)} + t_{1}^{(2)} \widetilde{\mono}_0 } 
+ b \lrp{\tilde{x}_1^{(1)} - \lrp{T - t_{1}^{(1)}}\widetilde{\mono}_0 }\\
&\stackrel{\eqref{eq:t0}}{=} 
  a \lrp{\tilde{x}_{1}^{(2)} + t_{1}^{(2)} \widetilde{\mono}_0 } 
+ b \lrp{\tilde{x}_{1}^{(1)} + t_{1}^{(1)} \widetilde{\mono}_0 }- \lrp{a+b} \tau \widetilde{\mono}_0\\
&\stackrel{\phantom{\eqref{eq:t0}}}{=}
  a \lrp{\tilde{x}_{1}^{(2)} + \lrp{t_{1}^{(2)}-\tau} \widetilde{\mono}_0 } 
+ b \lrp{\tilde{x}_{1}^{(1)} + \lrp{t_{1}^{(1)}-\tau} \widetilde{\mono}_0 }\\
&\stackrel{\phantom{\eqref{eq:t0}}}{=} a \tilde{r}+ b \tilde{r}.
}
\end{proof}

We will now use these exponent vectors to construct a pair $\lrp{\Par\lrp{\gamma^{(1)}}, \Par\lrp{\gamma^{(2)}}}$, 
where 
$\gamma^{(1)}$ bends at the same walls as the broken line sub-segment of $\tilde{\gamma}$ having endpoints $\tilde{p}$ and $\tilde{r}$,
$\gamma^{(2)}$ bends at the same walls as the broken line sub-segment of $\tilde{\gamma}$ having endpoints $\tilde{r}$ and $\tilde{q}$,
and ${\gamma^{(1)}(0)= \gamma^{(2)}(0)= \lrp{a+b}\tilde{r}}$.
(We will deal with non-genericity issues later by perturbing this pair.)
There are two steps.  
First, we can consider a path beginning at $\lrp{a+b}\tilde{r}$ and proceeding with velocity $\mono_0^{(\epsilon)}$. 
Does this path intersect $\ray_1^{(\epsilon)}$?  
If it does, after reaching $\ray_1^{(\epsilon)}$ we can proceed with velocity $\mono_1^{(\epsilon)}$ 
and ask whether the resulting path intersects $\ray_2^{(\epsilon)}$.
Of course, if $\lrp{a+b}\tilde{r}$ is {\emph{already}} in $\ray_1^{(\epsilon)}$, the velocity $\mono_0^{(\epsilon)}$ is simply recorded for a degenerate domain of linearity and we  proceed from $\lrp{a+b}\tilde{r}$  with velocity $\mono_1^{(\epsilon)}$.
We can continue this process until we have either exhausted all exponent vectors $\mono_i^{(\epsilon)}$ or found a bending ray $\ray_i^{(\epsilon)}$ that isn't crossed.
In the former case, we have a parametrized path that crosses all bending walls in order, and we can move on to the second step--
are adjacent exponent vectors $\mono_i^{(\epsilon)}$ and $\mono_{i-1}^{(\epsilon)}$ for this path related by an allowed bending at $\ray_i^{(\epsilon)}$?
We tackle the first step in \thref{lem:path} and the second in \thref{lem:bend}.

\begin{lemma}\thlabel{lem:path}
There is a piecewise linear path $\eta:[0,\infty)$ with endpoint $\eta(0)= \lrp{a+b}\tilde{r}$ intersecting the rays $\ray_k^{(\epsilon)}$ in order and having velocity $\mono_k^{(\epsilon)}$ along the (possibly degenerate) segment between $\ray_{k}^{(\epsilon)}$ and $\ray_{k+1}^{(\epsilon)}$.
The velocity from $\eta(0)$ to $\ray_1^{(\epsilon)}$ is $\mono_0^{(\epsilon)}$, and the velocity in the unbounded domain of linearity is $\mono_{s_\epsilon}^{(\epsilon)}$. 
\end{lemma}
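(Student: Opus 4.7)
I would approach the proof as an inductive construction, building $\eta$ segment by segment and showing at each stage that its intersection with the next bending ray is a \emph{positive} multiple of the corresponding $\tilde{x}_k^{(\epsilon)}$. The crucial invariant to maintain is: whenever $\eta$ arrives at $\ray_k^{(\epsilon)}$, it does so at a point $\lambda_k \tilde{x}_k^{(\epsilon)}$ with $\lambda_k > 0$.

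First I would deal with the generic case. For the base case, I would compute directly. Using the defining formulas, one has $\mono_0^{(1)}/a = \tilde{r} + \tau \widetilde{\mono}_0$ (obtained by extrapolating the central segment $\tilde{L}_0$, on which $\tilde{r}$ lies, back to time $0$) and analogously $\mono_0^{(2)}/b = \tilde{r} - (T-\tau)\widetilde{\mono}_0$. Setting $(a+b)\tilde{r} + s \mono_0^{(1)} = \lambda_1 \tilde{x}_1^{(1)}$ and expanding $\tilde{x}_1^{(1)} = \tilde{r} + (\tau - t_1^{(1)})\widetilde{\mono}_0$ gives a 2-by-2 linear system in $(s, \lambda_1)$, nondegenerate by \thref{lin_indep}, with the explicit solution
\eqn{
\lambda_1 = (a+b)\tfrac{\tau}{t_1^{(1)}}, \qquad s = \tfrac{(a+b)(\tau - t_1^{(1)})}{a\,t_1^{(1)}},
}
both positive since $0 < t_1^{(1)} < \tau$. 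The case $\epsilon = 2$ is symmetric, using $0 < T - t_1^{(2)} < T - \tau$.

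The inductive step is the same computation one level higher. Assuming $\eta$ has reached $\lambda_k \tilde{x}_k^{(\epsilon)}$, one solves $\lambda_k \tilde{x}_k^{(\epsilon)} + s \mono_k^{(\epsilon)} = \lambda_{k+1} \tilde{x}_{k+1}^{(\epsilon)}$ by substituting the definition of $\mono_k^{(\epsilon)}$ and the relation $\tilde{x}_{k+1}^{(\epsilon)} = \tilde{x}_k^{(\epsilon)} + (t_k^{(\epsilon)} - t_{k+1}^{(\epsilon)})\widetilde{\mono}_k^{(\epsilon)}$ coming from the geometry of $\tilde{L}_k^{(\epsilon)}$. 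Linear independence of $\tilde{x}_k^{(\epsilon)}$ and $\widetilde{\mono}_k^{(\epsilon)}$ gives the unique solution $\lambda_{k+1} = \lambda_k \cdot t_k^{(\epsilon)} / t_{k+1}^{(\epsilon)}$ and a positive $s$, because the bending times along $\tilde{\gamma}$ are strictly monotone as one moves away from $\tilde{r}$. After $s_\epsilon$ such steps, the path reaches $\lambda_{s_\epsilon} \tilde{x}_{s_\epsilon}^{(\epsilon)}$; for the unbounded domain one simply extends in direction $\mono_{s_\epsilon}^{(\epsilon)}$ to infinity, using Proposition~\ref{prop:velocities} which identifies this direction with $a\tilde{p}$ or $b\tilde{q}$.

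For the non-generic case, I would take the defining sequence $(\tilde{\gamma}_n)_{n \in \N}$ of generic approximants to $\tilde{\gamma}$, construct the corresponding $\eta_n$ by the generic argument above, and pass to the limit. The convergence of bending points (as in \thref{key_lemma} Case~2) together with continuity of the linear formulas for $\lambda_k, s$ guarantees a well-defined limit $\eta$, possibly with degenerate segments where $\lambda_k = \lambda_{k+1}$ and consecutive bending points coincide. The main obstacle is organizational rather than mathematical: keeping track of the dual indexing for the two sides $\epsilon \in \{1, 2\}$, handling the shared central segment $\tilde{L}_0$ and the convention $\tilde{x}_0^{(1)} = \tilde{x}_1^{(2)}$ (respectively $\tilde{x}_0^{(2)} = \tilde{x}_1^{(1)}$) correctly in the base case, and invoking the $\tilde{r}_-$ convention introduced before the lemma in the event that $\tilde{r}$ already lies on a bending wall.
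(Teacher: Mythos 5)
Your proposal is correct and follows essentially the same route as the paper: the paper's (much terser) proof simply observes that $\mono_k^{(\epsilon)}$ is a positive combination of the radial direction $\tilde{x}_k^{(\epsilon)}$ along the current ray and the direction $\pm\widetilde{\mono}_k^{(\epsilon)}$ toward the next bending point, so the path must cross $\ray_{k+1}^{(\epsilon)}$, and your explicit $2\times 2$ systems just make that observation quantitative. The only cosmetic point is that your inductive formula $\lambda_{k+1}=\lambda_k\, t_k^{(\epsilon)}/t_{k+1}^{(\epsilon)}$ holds for $\epsilon=1$; for $\epsilon=2$ the same computation gives $\lambda_{k+1}=\lambda_k\,(T-t_k^{(2)})/(T-t_{k+1}^{(2)})$, consistent with the sign asymmetry you already note in the base case.
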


\begin{proof}
First take $\epsilon=1$.
Observe that $a>0$ and $t_i^{(1)}>0$ for all $i$.
Moreover, $\tilde{x}_i^{(1)}$ is in $\ray_i^{(1)}$ 
and $\widetilde{\mono}_{i-1}^{(1)}$ is directed from $\tilde{x}_{i-1}^{(1)}$ toward $\tilde{x}_i^{(1)}$.
It follows that the path intersects $\ray_i^{(1)}$, yielding the claim.
The case of $\epsilon=2$ is similar.
The only subtlety is that the negative sign in front of $\lrp{T-t_i^{(2)}}$ accounts for the path and $\tilde{\gamma}$ crossing $\ray_i^{(2)}$ in opposite directions. 
\end{proof}

In the following lemma we drop the superscript ${}^{(\epsilon)}$ to avoid an aesthetic disaster.
\begin{lemma}\thlabel{lem:bend}
The exponent vectors $\mono_i$ and $\mono_{i-i}$ are related by
\eq{\mono_{i-1}- \mono_{i} = \lrp{\widetilde{\mono}_{i-1}- \widetilde{\mono}_{i} } \frac{\lra{n_{0,i},\mono_{i}}}{\lra{n_{0,i},\widetilde{\mono}_{i}}}. }{mfrommp}
This corresponds to an allowed bending at $\ray_i$.
\end{lemma}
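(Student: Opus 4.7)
The plan is to establish the identity \eqref{mfrommp} by a direct parallel computation in the two cases $\epsilon \in \{1,2\}$, then to deduce the allowed-bending interpretation from the corresponding property of $\tilde{\gamma}$ by an argument parallel to the one in the proof of \thref{key_lemma}.

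For the formula, I would exploit that $\tilde{\gamma}$ has constant velocity $-\widetilde{\mono}_{i-1}$ on $\tilde{L}_{i-1}$, which yields
\eqn{\tilde{x}_{i-1}^{(1)}+t_{i-1}^{(1)}\widetilde{\mono}_{i-1}^{(1)} \;=\; \tilde{x}_i^{(1)}+t_i^{(1)}\widetilde{\mono}_{i-1}^{(1)}}
for $\epsilon=1$, and the analogous identity with $T-t$ in place of $t$ for $\epsilon=2$. Substituting into the definitions of $\mono_i^{(\epsilon)}$ and $\mono_{i-1}^{(\epsilon)}$ lets me rewrite $\mono_{i-1}^{(\epsilon)}$ at the same ``base point'' as $\mono_i^{(\epsilon)}$ but with $\widetilde{\mono}_{i-1}^{(\epsilon)}$ in place of $\widetilde{\mono}_i^{(\epsilon)}$; the difference $\mono_{i-1}^{(\epsilon)}-\mono_i^{(\epsilon)}$ then collapses to a scalar multiple of $\widetilde{\mono}_{i-1}^{(\epsilon)}-\widetilde{\mono}_i^{(\epsilon)}$, with scalar $c_i = a\,t_i^{(1)}$ for $\epsilon=1$ and $c_i = -b(T-t_i^{(2)})$ for $\epsilon=2$. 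To identify this scalar with the right-hand side of \eqref{mfrommp}, I pair $n_{0,i}$ against the defining expression for $\mono_i^{(\epsilon)}$ and use $\tilde{x}_i\in\ray_i\subset n_{0,i}^\perp$ to kill the base-point contribution, obtaining $\langle n_{0,i},\mono_i\rangle = c_i\,\langle n_{0,i},\widetilde{\mono}_i\rangle$, and hence $c_i = \tfrac{\langle n_{0,i},\mono_i\rangle}{\langle n_{0,i},\widetilde{\mono}_i\rangle}$.

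For the allowed-bending assertion, I would argue as in the proof of \thref{key_lemma}. After replacing $\scat$ by an equivalent scattering diagram with scattering functions of the form $(1+z^{a_j m_{0,i}})^{c_j}$, the fact that $\tilde{\gamma}$ bends allowably at $\ray_i$ gives $\widetilde{\mono}_{i-1}-\widetilde{\mono}_i = k\,m_{0,i}$ for some integer $k$ such that $z^{k\,m_{0,i}}$ is a non-zero summand of $F_{\mathfrak{J}_i}^{\langle n_{0,i},\widetilde{\mono}_i\rangle}$. Combined with \eqref{mfrommp}, this yields $\mono_{i-1}-\mono_i = c_i k\,m_{0,i}$, and since
\eqn{F_{\mathfrak{J}_i}^{\langle n_{0,i},\mono_i\rangle} \;=\; F_{\mathfrak{J}_i}^{c_i\langle n_{0,i},\widetilde{\mono}_i\rangle} \;=\; \bigl(F_{\mathfrak{J}_i}^{\langle n_{0,i},\widetilde{\mono}_i\rangle}\bigr)^{c_i},}
non-negativity of the coefficients of $F_{\mathfrak{J}_i}$ (via \cite[Theorem~4.10]{GHKK}) ensures that selecting the $z^{k\,m_{0,i}}$ term from each of the $c_i$ factors produces a positive contribution $(d_k)^{c_i}$ to the coefficient of $z^{c_i k\,m_{0,i}}$, with no cancellations. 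The bend $\mono_i\to\mono_{i-1}$ is therefore allowed.

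The main obstacle I foresee is the sign/integrality bookkeeping for $c_i$. Positivity in the $\epsilon=1$ case is immediate from $a,t_i^{(1)}>0$. For $\epsilon=2$ the apparent negative sign $-b(T-t_i^{(2)})$ reflects the fact that $\gamma^{(2)}$ traverses $\wall_i^{(2)}$ in the orientation opposite to $\tilde{\gamma}$, flipping the wall-crossing sign $\epsilon = -\sgn(\langle n_{0,i}^{(2)},\cdot\rangle)$ in the path-ordered product and bringing the identity back into line with the allowed-bending definition. Integrality of $c_i$ is then obtained inductively in $i$, starting from $\mono_{s_\epsilon}^{(\epsilon)}\in\tilde{\rho}_0^{(\epsilon)}M^\circ$ (guaranteed by the choice of $a$ and $b$) and picking up a factor of $\langle n_{0,i}^{(\epsilon)},\widetilde{\mono}_i^{(\epsilon)}\rangle$ at each step, in direct parallel to the $\rho_i$-divisibility argument in \thref{key_lemma}.
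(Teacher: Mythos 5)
Your proof is correct and takes essentially the same approach as the paper's: both derive \eqref{mfrommp} by combining the constant-velocity identity along $\tilde{L}_{i-1}$ with $\langle n_{0,i},\tilde{x}_i\rangle=0$, and both deduce the allowed-bending claim from integrality of the scalar (inherited from the choice of $a,b$ via the $\tilde{\rho}$-divisibility) together with non-negativity of the scattering-function coefficients. You are somewhat more explicit than the paper on the $\epsilon=2$ sign bookkeeping and on the allowed-bending step, which the paper handles by assertion and a ``proved similarly.''
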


\begin{proof}
Take $\epsilon = 1$ first.
We have
\eq{\tilde{x}_{i-1} = \tilde{x}_{i} - \lrp{t_{i-1} - t_{i} } \widetilde{\mono}_{i-1},
}{eq:xtilde}
so
\eqn{\mono_{i-1}- \mono_{i} &\stackrel{\phantom{\eqref{eq:xtilde}}}{=} a \lrp{ \tilde{x}_{i-1} + t_{i-1} \widetilde{\mono}_{i-1}} - a \lrp{\tilde{x}_{i} + t_{i} \widetilde{\mono}_{i}  }\\
&\stackrel{\eqref{eq:xtilde}}{=} a \lrp{ \tilde{x}_{i} + t_{i} \widetilde{\mono}_{i-1} - \tilde{x}_{i} - t_{i} \widetilde{\mono}_{i}  }\\
&\stackrel{\phantom{\eqref{eq:xtilde}}}{=} a t_i \lrp{\widetilde{\mono}_{i-1} - \widetilde{\mono}_{i}} .}
But 
\eqn{\frac{\lra{n_{0,i}, \mono_i}}{\lra{n_{0,i}, \widetilde{\mono}_i}} &= \frac{\lra{n_{0,i}, a \lrp{\tilde{x}_i+ t_i \widetilde{\mono}_i}}}{\lra{n_{0,i}, \widetilde{\mono}_i}} \\
&= a \frac{\lra{n_{0,i}, t_i \widetilde{\mono}_i}}{\lra{n_{0,i}, \widetilde{\mono}_i}} \\
&= a t_i,}
where the second equality comes from $\lra{n_{0,i}, \tilde{x}_i} = 0$.
This yields \eqref{mfrommp}. 
Since we have taken $a$ such that $a \tilde{p} \in \tilde{\rho}_0^{(1)} M^\circ$, \eqref{mfrommp} ensures that $\mono_i \in \tilde{\rho}_i^{(1)} M^\circ \subset M^\circ$.
Then \eqref{mfrommp} corresponds to an allowed bending at $\ray_i$.

The case $\epsilon = 2$ is proved similarly.
\end{proof}

So, we have a pair $\lrp{\Par\lrp{\gamma^{(1)}},\Par\lrp{\gamma^{(2)}}}$, both having endpoint $\gamma^{(\epsilon)}(0)=\lrp{a+b}\tilde{r}$, where $I(\gamma^{(1)}) = a \tilde{p}$ and $I(\gamma^{(2)}) = b \tilde{q}$.
Furthermore, the final exponent vectors of this pair sum to $\lrp{a+b}\tilde{r}$.
If the pair is generic, then $\lrp{\Par\lrp{\gamma^{(1)}},\Par\lrp{\gamma^{(2)}}}$
is the desired pair of broken lines balanced at $\lrp{a+b}\tilde{r}$.
In the non-generic case, thanks to \thref{lem:bend}, we can take this pair to be the limit of a sequence $\lrp{\Par\lrp{\gamma^{(1)}_k},\Par\lrp{\gamma^{(2)}_k} }_{k\in \N} $ of \red{based} pairs of generic broken lines having common endpoint $x_{0;k}$ on the side of $\lrc{\wall_{1_j}^{(2)}}$ which corresponds to times $t < t_1^{(2)}$.
An element $\lrp{\Par\lrp{\gamma^{(1)}_k},\Par\lrp{\gamma^{(2)}_k} }$ of this sequence is a pair of generic broken lines balanced near $\lrp{a+b}\tilde{r}$, while the limit $\lrp{\Par\lrp{\gamma^{(1)}},\Par\lrp{\gamma^{(2)}}}$ is balanced at $\lrp{a+b}\tilde{r}$.
We summarize the results of this section with the following proposition.

\begin{proposition}\thlabel{prop:jp2bl}
Given the following data:
\begin{itemize}
    \item $\Par\lrp{\tilde{\gamma}}$ the support with exponent vectors of a broken line segment with endpoints $\tilde{\gamma}(0) = \tilde{p}$ and $\tilde{\gamma}(T)=\tilde{q}$ in $M^\circ_{\Q}$
    \item $\tau$ a rational number in $\lrp{0,T}$
    \item $\lrp{a,b}$ a pair of positive integers with $\frac{b}{a+b}=\frac{\tau}{T}$, $a \tilde{p} \in \tilde{\rho}_0^{(1)} M^\circ$, and $b \tilde{q} \in \tilde{\rho}_0^{(2)} M^\circ$
\end{itemize}
we obtain the support with exponent vectors of a pair of broken lines $\lrp{\Par\lrp{\gamma^{(1)}},\Par\lrp{\gamma^{(2)}}}$ balanced at $\lrp{a+b} \tilde{\gamma}(\tau)$ with $I(\gamma^{(1)})= a \tilde{p}$ and $I(\gamma^{(2)}) = b \tilde{q}$. 
\end{proposition}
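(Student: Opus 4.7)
The plan is to execute the construction built up through Section~\ref{sec:in_alg} and verify the three claimed properties. I split $\tilde{\gamma}$ at $\tilde{r}:=\tilde{\gamma}(\tau)$ into two sub-segments indexed by $\epsilon\in\{1,2\}$ (with $\epsilon=1$ the sub-segment from $\tilde{p}$ to $\tilde{r}$ and $\epsilon=2$ the sub-segment from $\tilde{r}$ to $\tilde{q}$), and for each $\epsilon$ define candidate exponent vectors
\[ m_i^{(1)} := a\bigl(\tilde{x}_i^{(1)} + t_i^{(1)}\widetilde{m}_i^{(1)}\bigr), \qquad m_i^{(2)} := b\bigl(\tilde{x}_i^{(2)} - (T - t_i^{(2)})\widetilde{m}_i^{(2)}\bigr). \]
Direct substitution using the hypothesis $\frac{b}{a+b}=\frac{\tau}{T}$ yields $m_{s_1}^{(1)} = a\tilde{p}$, $m_{s_2}^{(2)} = b\tilde{q}$, and $m_0^{(1)} + m_0^{(2)} = (a+b)\tilde{r}$, giving both the required initial exponents and the balancing condition on the final exponents; this is \thref{prop:velocities}.

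To realise these vectors as slopes of genuine piecewise linear paths, I start at $(a+b)\tilde{r}$ with velocity $m_0^{(\epsilon)}$. The positivity of $a$, $b$, $t_i^{(1)}$, and $T-t_i^{(2)}$ (combined with the fact that $\widetilde{m}_{i-1}^{(\epsilon)}$ points from $\tilde{x}_{i-1}^{(\epsilon)}$ toward $\tilde{x}_i^{(\epsilon)}$) guarantees that the path reaches the ray $\ray_i^{(\epsilon)}$ through each $\tilde{x}_i^{(\epsilon)}$ in the correct order (\thref{lem:path}). The key algebraic step is to verify that each transition from $m_{i-1}^{(\epsilon)}$ to $m_i^{(\epsilon)}$ at $\ray_i^{(\epsilon)}$ is an allowed bending in the sense of \thref{well_defined_bend}. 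This reduces to the identity
\[ m_{i-1} - m_i = (\widetilde{m}_{i-1} - \widetilde{m}_i)\,\frac{\langle n_{0,i}, m_i\rangle}{\langle n_{0,i}, \widetilde{m}_i\rangle}, \]
which is \thref{lem:bend}; the divisibility hypotheses $a\tilde{p}\in\tilde{\rho}_0^{(1)}M^\circ$ and $b\tilde{q}\in\tilde{\rho}_0^{(2)}M^\circ$ ensure the scalar factor is a positive integer, so each $m_i^{(\epsilon)}$ lies in $M^\circ$, and the non-negativity of coefficients in cluster scattering functions then upgrades $m_{i-1}-m_i$ to an exponent of a nonzero summand of $F_{\mathfrak{J}_i}^{\langle n_{0,i}, m_i\rangle}$.

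The main obstacle is the non-generic case, where $\tilde{\gamma}$ may pass through $\mathrm{Sing}(\mathfrak{D})$ or $(a+b)\tilde{r}$ may lie in $\mathrm{Supp}(\mathfrak{D})$. Here the pair produced above must be realised as a limit of based pairs of generic broken lines balanced near $(a+b)\tilde{r}$, in accordance with \thref{def:broken} and \thref{def:balanced}. I handle this by perturbing the common endpoint slightly off $(a+b)\tilde{r}$ to the $t<t_1^{(2)}$ side of $\wall_{1_j}^{(2)}$, indexing the perturbation by the defining sequence $(\tilde{\gamma}_k)_{k\in\N}$ of the input segment, and sliding each bending point radially along its ray. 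Since the identity of \thref{lem:bend} depends only on the primitives $n_{0,i}$ and the slopes (not on the location of the bending points), every perturbed pair remains a generic based pair of broken lines, and passing to the limit recovers the desired pair balanced at $(a+b)\tilde{r}$.
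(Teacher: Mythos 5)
Your proposal is correct and follows essentially the same route as the paper: the proposition is proved there by assembling exactly the pieces you cite — the split of $\tilde{\gamma}$ at $\tilde{r}$, the exponent vectors $\mono_i^{(1)}= a(\tilde{x}_i^{(1)}+t_i^{(1)}\widetilde{\mono}_i^{(1)})$ and $\mono_i^{(2)}= b(\tilde{x}_i^{(2)}-(T-t_i^{(2)})\widetilde{\mono}_i^{(2)})$, Proposition~\ref{prop:velocities}, \thref{lem:path}, \thref{lem:bend}, and the limit of generic based pairs with endpoints on the $t<t_1^{(2)}$ side of $\lrc{\wall_{1_j}^{(2)}}$ for the non-generic case. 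No gaps to report.
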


\begin{example}
We give an example illustrating \thref{prop:jp2bl} for a broken line segment in the $A_2$ scattering diagram.
Take the broken line segment $\tilde{\gamma}$ of Figure~\ref{fig:segment} with $\tilde{\gamma}(0) = \tilde{p}= \lrp{1,-5}$ and $\tilde{\gamma}(5)=\tilde{q}= \lrp{2,4}$.

\noindent
\begin{center}
\begin{minipage}{.85\linewidth}
\captionsetup{type=figure}
\begin{center}
\begin{tikzpicture}

    \def\d{.75}
    \def\l{4}

    \path (-\l,0) coordinate (3) --++ (\l,0) coordinate (0) --++ (\l,0) coordinate (1);
    \path (0,\l) coordinate (2) --++ (0,-2*\l) coordinate (4) --++ (\l,0) coordinate (5);

    \draw[name path = wall1, thick, ->] (3) -- (1) node [pos=0.1, below] {$1+z^{\lrp{-1,0}}$};
    \draw[name path = wall2, thick, ->] (2) -- (4) node [pos=0, left] {$1+z^{\lrp{0,1}}$};
    \draw[name path = wall3, thick, ->] (0) -- (5) node [pos=0.6, sloped, above] {$1+z^{\lrp{-1,1}}$};

    \node [circle, fill, inner sep = 1.5pt]  (q) at (2*\d,4*\d) {}; 
    \node [circle, fill, inner sep = 1.5pt]  (x21) at (0,2*\d) {}; 
    \node [circle, fill, inner sep = 1.5pt]  (x11) at (-\d,0) {}; 
    \node [circle, fill, inner sep = 1.5pt]  (x12) at (0,-2*\d) {}; 
    \node [circle, fill, inner sep = 1.5pt]  (p) at (\d,-5*\d) {}; 
    \node [circle, fill, inner sep = 1.5pt, cyan]  (r) at (-.5*\d,\d) {}; 

    \node at (2*\d+1,4*\d) {$\tilde{q}= (2,4)$}; 
    \node at (1.2,2*\d) {$\tilde{x}_{1}^{(2)}= (0,2)$}; 
    \node (x11lab) at (-2.5,1) {$\tilde{x}_{1}^{(1)}= (-1,0)$}; 
    \node (x12lab) at (-1.5,-2) {$\tilde{x}_{2}^{(1)}= (0,-2)$}; 
    \node (rlab) at (1.5,.5) {$\tilde{r}= \lrp{-\frac{1}{2},1}$}; 
    \node at (\d+1,-5*\d) {$\tilde{p}= (1,-5)$}; 

    \draw[very thick, magenta] (p) -- (x12) node [pos=.4,sloped, above] {\tc{black}{$z^{\lrp{1,-3}}$}} -- (x11) node [pos=.5,sloped, below] {\tc{black}{$z^{\lrp{1,-2}}$}} -- (r) node [pos=1.1,sloped, above] {\tc{black}{$z^{\lrp{-1,-2}}$}} -- (x21) --(q) node [pos=.6,sloped, above] {\tc{black}{$z^{\lrp{-1,-1}}$}};
    
    \draw[->, darkgray] (x11lab)--(x11);
    \draw[->, darkgray] (x12lab)--(x12);
    \draw[->, darkgray] (rlab)--(r);

\end{tikzpicture}

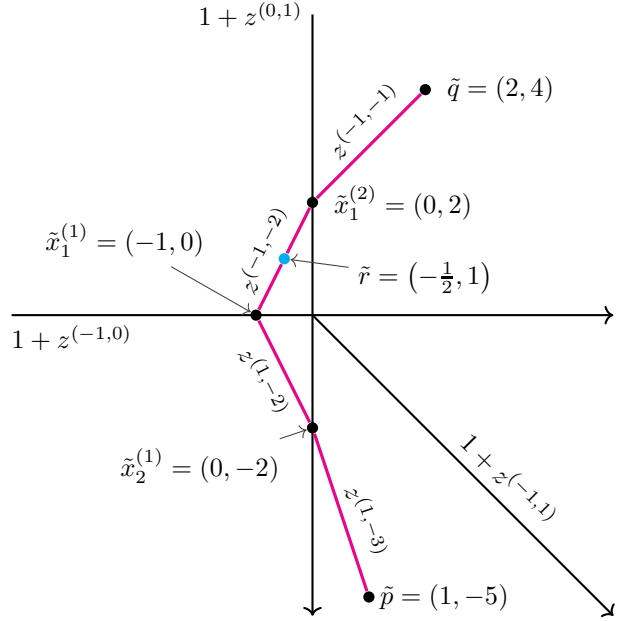
\captionof{figure}{\label{fig:segment} The broken line segment $\tilde{\gamma}$.} 

\end{center}
\end{minipage}
\end{center}
The relevant intermediate times are $t_{2}^{(1)}= 1$, $t_{1}^{(1)}= 2$, $t_0 = 2.5$, and $t_1^{(2)} = 3$.
That is, $\tilde{x}_2^{(1)}=\tilde{\gamma}(1)$, $\tilde{x}_1^{(1)}=\tilde{\gamma}(2)$, $\tilde{r}=\tilde{\gamma}(2.5)$, and $\tilde{x}_1^{(2)}=\tilde{\gamma}(3)$. 
We compute 
$\tilde{\rho}_0^{(1)} = \langle (1,0) ,  (1,-3) \rangle \cdot \langle (0,-1) , (1,-2) \rangle = 2$,
and similarly $\tilde{\rho}_0^{(2)} = 1$. 
For the integers $a$ and $b$, we need $\frac{b}{a+b} = \frac{2.5}{5}= \frac{1}{2}$.
As $\tilde{p}$ and $\tilde{q}$ are already in $M^\circ$ we can take $a=b=1$.
Then the exponent vectors are given by:
\begin{align*}
    m_0^{(1)} &= (0 , 2) + 3 (-1,-2) = (-3,-4) \\
    m_1^{(1)} &= (-1, 0) + 2 ( 1,-2) = (1 ,-4) \\
    m_2^{(1)} &= ( 0,-2) + 1 (-1,-3) = (1 ,-5) \\
    m_0^{(2)} &= (-1, 0) - 3 (-1,-2) = ( 2, 6) \\
    m_1^{(2)} &= ( 0, 2) - 2 (-1,-1) = ( 2, 4).
\end{align*}
Note that $\mono_{2}^{(1)} = a \tilde{p}$, $\mono_{1}^{(2)} = b \tilde{q}$, and $\mono_0^{(1)}+\mono_0^{(2)} = \lrp{a+b}\tilde{r}$ 
in agreement with Proposition \ref{prop:velocities}.
The resulting \red{balanced} pair of broken line can be seen in Figure~\ref{fig:constructedpair}.

\noindent
\begin{center}
\begin{minipage}{.85\linewidth}
\captionsetup{type=figure}
\begin{center}
\begin{tikzpicture}

    \def\d{.3}
    \def\l{4}
    \def\op{.25}
    \definecolor{color1}{rgb}{0,1,.25}
    \definecolor{color2}{rgb}{0,.25,1}
    \colorlet{color3}{color1!50!color2}
    \def\fudgeup{1.3}
    \def\fudgedown{.2}

    \path (-\l,0) coordinate (3) --++ (\l,0) coordinate (0) --++ (\l,0) coordinate (1);
    \path (0,\l) coordinate (2) --++ (0,-2*\l) coordinate (4) --++ (\l,0) coordinate (5);

    \draw[name path = wall1, thick, ->] (3) -- (1) node [pos=0.1, below] {$1+z^{\lrp{-1,0}}$};
    \draw[name path = wall2, thick, ->] (2) -- (4) node [pos=0, left] {$1+z^{\lrp{0,1}}$};
    \draw[name path = wall3, thick, ->] (0) -- (5) node [pos=0.6, sloped, above] {$1+z^{\lrp{-1,1}}$};

    \node [circle, fill, inner sep = 1.5pt, opacity =\op]  (qt) at (2*\d,4*\d) {}; 
    \node [circle, fill, inner sep = 1.5pt, opacity =\op]  (xt21) at (0,2*\d) {}; 
    \node [circle, fill, inner sep = 1.5pt, opacity =\op]  (xt11) at (-\d,0) {}; 
    \node [circle, fill, inner sep = 1.5pt, opacity =\op]  (xt12) at (0,-2*\d) {}; 
    \node [circle, fill, inner sep = 1.5pt, opacity =\op]  (pt) at (\d,-5*\d) {}; 
    \node [circle, fill, inner sep = 1.5pt, cyan, opacity =\op]  (rt) at (-.5*\d,\d) {}; 

    \draw[very thick, magenta, opacity =\op] (pt) -- (xt12) -- (xt11) -- (rt) -- (xt21) -- (qt);
    
    \node [circle, fill, inner sep = 1.5pt, color3] (r) at (-\d,2*\d) {};
    \node [circle, fill, inner sep = 1.5pt] (x11) at (-2.5*\d,0) {};
    \node [circle, fill, inner sep = 1.5pt] (x12) at (0,-10*\d) {};
    \node [circle, fill, inner sep = 1.5pt] (x21) at (0,5*\d) {};
    
    \path[name path = lin1] (x12)--++(\fudgedown,-5*\fudgedown);
    \path[name path = bot] (-\l,-\l)--(\l,-\l);
    \path[name intersections={of=lin1 and bot,by=end1}];    

    \path[name path = lin2] (x21)--++(\fudgeup,2*\fudgeup);
    \path[name path = top] (-\l,\l)--(\l,\l);
    \path[name intersections={of=lin2 and top,by=end2}];    

    \draw [very thick, color1] (r) -- (x11) -- (x12) node [pos=.4,sloped, below] {\tc{black}{$z^{\lrp{1,-4}}$}} -- (end1) node [pos=.6,sloped, above] {\tc{black}{$z^{\lrp{1,-5}}$}};
    \draw [very thick, color2] (r) -- (x21) node [pos=.7,sloped, above] {\tc{black}{$z^{\lrp{2,6}}$}} -- (end2) node [pos=.6,sloped, above] {\tc{black}{$z^{\lrp{2,4}}$}};
    
    \node (m01) at (-2,.4) {$z^{\lrp{-3,-4}}$};    
    \draw[->, darkgray] (m01)--($(x11)!.4!(r)$);

    \node (rlab) at (-2,1.5) {$r=\lrp{-1,2}$};
    \draw[->, darkgray] (rlab)--(r);

    \node at (-.55,-2.6) {$\gamma^{(1)}$};
    \node at (1.65,4) {$\gamma^{(2)}$};
    
\end{tikzpicture}

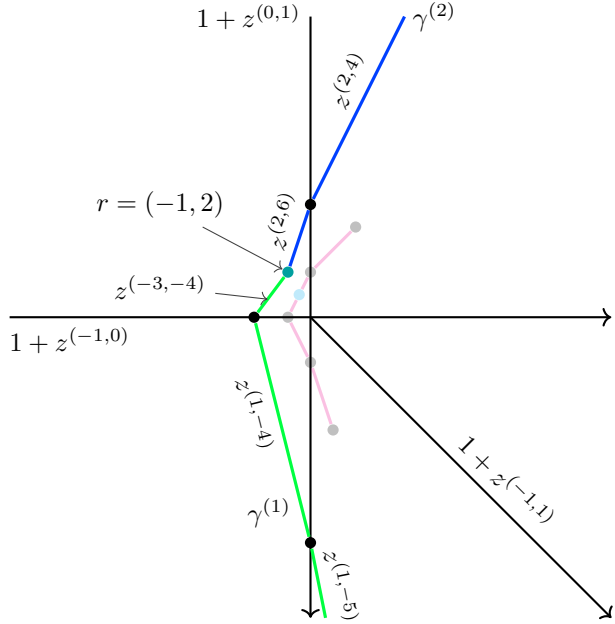
\captionof{figure}{\label{fig:constructedpair} The \red{balanced} pair of broken lines $\lrp{\gamma^{(1)},\gamma^{(2)}}$.} 

\end{center}
\end{minipage}
\end{center}

\end{example}

\section{The main result}\label{sec:MainThm}

\change{Throughout this section we say that a smooth scheme $V$ is a quotient of a cluster $\cA $-variety if it is isomorphic up to codimension 2 to one of the quotients described in Section~\ref{sec:RedDim}. Similarly, we say $V$ is a fiber of a cluster $\cX $-variety if it is isomorphic up to codimension 2 to a variety of the form $ \cX_\phi$ described in Section~\ref{sec:RedDim}. In particular, all log Calabi-Yau surfaces are of this kind, and they are classified in \cite{Travis2}. } 

\begin{theorem}\thlabel{MainThm}
\change{Let $V$ be one of the following: \begin{enumerate}
    \item \label{cv} a cluster variety, or
    \item \label{A/T} a quotient of a cluster $\cA$-variety, or
    \item \label{Xt} a fiber of a cluster $\cX$-variety.
\end{enumerate}
}
Then a closed subset $S$ of $V^\trop\lrp{\R}$ is positive if and only if $S$ is broken line convex.
\end{theorem}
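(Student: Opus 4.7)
The plan is to first prove the equivalence for cluster varieties of type $\cAp$ (where the injectivity assumption holds), and then propagate the result to the other cases via the translation lemmas already established in the paper.

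\textbf{Step 1: Reduction to $\cAp$.} Case \eqref{cv} reduces to $\cAp$: for $\cA$-type varieties use \thref{lem:Apos} and \thref{lem:projection_and_convexity} to identify positive (resp.\ broken line convex) subsets of $(\cA^\vee)^\trop(\R)$ with those of $(\cAp^\vee)^\trop(\R)$ that are invariant under the natural fiber direction, and similarly use \thref{lem:Xpos} and \thref{lem:slice_and_convexity} for the $\cX$ case by restricting to the weight-zero slice. Cases \eqref{A/T} and \eqref{Xt} then follow by applying Remarks \ref{rem:PosRedDim} and \ref{rem:BLCRedDim} on top of Step 1 for $\cA$ and $\cX$. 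Hence it suffices to prove the equivalence when $V=\cAp$, in which case $V^\trop(\R)\cong \widetilde M^\circ_\R$ and $\scat$ satisfies the injectivity assumption, so the machinery of Sections \ref{sec:key_lemma} and \ref{sec:in_alg} applies.

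\textbf{Step 2: Broken line convex implies positive.} Let $S\subset \widetilde M^\circ_\R$ be broken line convex and closed. Fix $a,b\in \N$, $p\in aS(\Z)$, $q\in bS(\Z)$, and $r\in \widetilde M^\circ(\Z)$ with $\alpha(p,q,r)\neq 0$. By the definition of the structure constant recalled in \eqref{eq:multibrokenline} together with \thref{def:balanced}, there exists a pair of broken lines $(\gamma^{(1)},\gamma^{(2)})$ balanced at $r$ with $I(\gamma^{(1)})=p$ and $I(\gamma^{(2)})=q$. \thref{prop:bl2jp} applied to this pair and to $(a,b)$ produces a broken line segment $\tilde\gamma$ with endpoints $\tilde p:=p/a\in S\cap \widetilde M^\circ_\Q$ and $\tilde q:=q/b\in S\cap \widetilde M^\circ_\Q$ passing through $r/(a+b)$. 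Broken line convexity of $S$ forces $r/(a+b)\in S$, i.e.\ $r\in (a+b)S(\Z)$, so $S$ is positive.

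\textbf{Step 3: Positive implies broken line convex.} Let $S$ be closed and positive. Fix rational points $\tilde p,\tilde q\in S$ and let $\tilde\gamma:[0,T]\to \widetilde M^\circ_\R$ be a broken line segment joining them; we must show $\tilde\gamma(\tau)\in S$ for every $\tau\in [0,T]$. Since $S$ is closed it is enough to treat rational $\tau\in (0,T)$, so write $\tilde r=\tilde\gamma(\tau)$. Choose positive integers $a,b$ with $b/(a+b)=\tau/T$, $a\tilde p\in \tilde\rho_0^{(1)}\widetilde M^\circ$ and $b\tilde q\in \tilde\rho_0^{(2)}\widetilde M^\circ$; such $a,b$ exist because $\tilde p,\tilde q\in \widetilde M^\circ_\Q$ and $T,\tau\in \Q$. \thref{prop:jp2bl} then produces a pair of broken lines $(\gamma^{(1)},\gamma^{(2)})$ balanced at $(a+b)\tilde r$ with $I(\gamma^{(1)})=a\tilde p$ and $I(\gamma^{(2)})=b\tilde q$. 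This pair contributes a nonzero summand $c(\gamma^{(1)})c(\gamma^{(2)})\in \Z_{>0}$ to $\alpha_{(a+b)\tilde r}(a\tilde p,b\tilde q,(a+b)\tilde r)$, and since all contributions to this structure constant are non-negative (positivity of the Laurent phenomenon), no cancellation can occur and $\alpha(a\tilde p,b\tilde q,(a+b)\tilde r)\neq 0$. Positivity of $S$ now gives $(a+b)\tilde r\in (a+b)S(\Z)$, hence $\tilde r\in S$, as desired.

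\textbf{Main obstacle.} The essential work is entirely contained in the constructive \thref{prop:bl2jp} and \thref{prop:jp2bl}; once those are granted, the main delicacy here is a bookkeeping issue: in Step 3 one must ensure that the pair produced in the non-generic case (when $\tilde r$ lies on a wall or $\tilde\gamma$ passes through $\Sing(\scat)$) still contributes a strictly positive summand to $\alpha_z$ when $z$ is a small generic perturbation of $(a+b)\tilde r$. This is handled by invoking the sequence $(\gamma^{(1)}_k,\gamma^{(2)}_k)$ of generic balanced pairs approximating $(\gamma^{(1)},\gamma^{(2)})$ provided by \thref{def:balanced} together with the non-negativity of broken line counts, so that positivity of the limit contribution follows from positivity of the generic contributions along the sequence.
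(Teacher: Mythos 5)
Your proposal is correct and follows essentially the same route as the paper: reduce to $\cAp$ via the translation lemmas, then apply \thref{prop:bl2jp} for ``broken line convex $\Rightarrow$ positive'' and \thref{prop:jp2bl} for ``positive $\Rightarrow$ broken line convex.'' Your explicit remark in Step~3 that the balanced pair produced by \thref{prop:jp2bl} genuinely contributes a nonzero summand to $\alpha$ (because broken line contributions are non-negative, so no cancellation can occur) makes rigorous a point the paper treats tersely, and your ``main obstacle'' paragraph correctly identifies the passage from the non-generic limit to a generic perturbation $z$ as the place where one must invoke the approximating sequence of \thref{def:balanced}.

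The one small omission is in Step~2: \thref{def:positive_set} allows $a=0$ or $b=0$, but \thref{prop:bl2jp} takes a pair of \emph{positive} integers, so the degenerate case is not covered by your argument (indeed $p/a$ is undefined if $a=0$). The paper handles this separately: if, say, $a=0$ then $p=0$, $\tf_0=1$, so $\alpha(0,q,r)\neq 0$ forces $r=q\in (0+b)S$. This is a one-line fix, but should be stated.
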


\begin{proof}
In view of \thref{lem:Apos}, \thref{lem:Xpos}, \change{\thref{rem:PosRedDim},} \thref{lem:projection_and_convexity}, \thref{lem:slice_and_convexity},  and \change{\thref{rem:BLCRedDim},} we only need to prove the statement when the dual $V^\vee$ is a cluster variety of type $\cAp$. Since we know the injectivity assumption holds for varieties of the form $\cAp$ we are free to use all the results of Sections~\ref{sec:key_lemma} and \ref{sec:in_alg}.

First assume $S$ is broken line convex in $V^\trop(\R)\cong\widetilde{M}^{\circ}_{\R}$. Let $a$ and $b$ be positive integers and $ p,q,r $ points in ${ V^\trop(\Z) \cong \widetilde{M}^{\circ}}$ such that $ p \in a S, q \in b S$ and $ \alpha(p,q,r)\neq 0$. 
We have to show that $r \in (a+b) S$.
Since $\alpha(p,q,r)\neq 0$ there exists a pair of broken lines $\lrp{\gamma^{(1)}, \gamma^{(2)}}$ balanced at $r$ with  $I(\gamma^{(1)})= p$ and $I(\gamma^{(2)})= q$. 
We use \thref{prop:bl2jp} to construct a broken line segment connecting $\frac{p}{a}\in S$ to $\frac{q}{b}\in S$ and passing through $\frac{r}{a+b}$. Since $S $ is broken line convex we have that $\frac{r}{a+b}\in S$. Therefore, $r\in (a+b) S$ as desired. 
Next, let $a=0$ or $b=0$.
Assume without loss of generality that $a=0$.
Then $p=0 \in V^\trop(\R)$ and $\alpha\lrp{p,q,r}$ is the coefficient of $\tf_r$ in the product $\tf_0 \cdot \tf_q = \tf_q$.
Since $\alpha\lrp{p,q,r} \neq 0$, $r=q$ and we have $r \in (0+b)S$.  We conclude that $S$ is positive.

Now assume $S$ is positive. 
Let $\tilde{p}$,  $\tilde{q}$ be in $S\cap V^\trop(\Q) $ and let $\tilde{\gamma}$ be a broken line segment connecting $\tilde{p}$ and $\tilde{q}$.  Say $\tilde{\gamma}(0) = \tilde{p}$ and $\tilde{\gamma}(T) = \tilde{q}$.
We will first show that every rational point of $\tilde{\gamma}$ is contained in $S$. 
Note that a rational number in $\lrp{0,1}$ can be expressed as $\frac{b}{a+b}$ for some pair of positive integers $a$, $b$.
Then for each $\beta \in \lrp{0,1}\cap \Q$, 
we can use \thref{prop:jp2bl} to construct a pair of broken lines $\lrp{\gamma^{(1)},\gamma^{(2)}}$ balanced at $\lrp{a+b} \tilde{\gamma}\lrp{\beta T} \in V^\trop(\Z)$,
where $a$ and $b$ are positive integers satisfying $\beta = \frac{b}{a+b}$,
$I(\gamma^{(1)}) = a \tilde{p} \in a S\cap V^\trop(\Z)$, and $I(\gamma^{(2)}) = b\tilde{q}\in b S \cap V^\trop(\Z)$.
Since $S$ is positive, this implies  $\tilde{\gamma}\lrp{\beta T}  \in S$.
Since $\beta$ was an arbitrary rational number in $\lrp{0,1}$, all rational points of $\tilde{\gamma}$ are in $S$, and since $S$ is closed $\Supp\lrp{\tilde{\gamma}}$ is entirely contained in $S$.  
\end{proof}

\begin{remark} \thlabel{rk:compute}
\thref{MainThm} provides a tractable way to construct positive subsets of $V^{\trop}(\R)$, or to check positivity of a given subset for \red{two dimensional scattering diagrams}. \change{ Explicitly, these are scattering diagrams for \begin{enumerate} 
\item two dimensional cluster varieties, or
\item quotients of $\cA$ varieties with $H\subset N$ of corank 2, or
\item fibers of $\cX$ with $H\subset N$ of corank 2.
\end{enumerate}}
The key comes from Remark \ref{rk:incoming}: 
\change{in dimension 2,} broken lines bending at outgoing walls necessarily bend away from the origin.
So, to check if a closed subset $S$ of $V^\trop(\R)$ is broken line convex,
we can first ask whether or not $S$ is identified with a convex subset of $\bL\otimes \R$ by a choice of seed, where $\bL$ is the lattice identified with $V^\trop(\Z)$.
If the answer is {\it{yes}},
$S$ stands a chance at being broken line convex.
It is not guaranteed to be broken line convex though,
since a broken line segment may begin at a point in $S$, exit $S$ and bend over a wall, then re-enter $S$.
We illustrate this for the ``type $G_2$'' scattering diagram in \thref{ex:G2}.
In this case, the broken line convex hull of the given vectors is strictly larger that the usual convex hull.

If $S$ contains the origin, bending over an outgoing wall will only push a broken line segment $\tilde{\gamma}$ that has exited $S$ further away from $S$, obstructing re-entry. 
So, if $S$ contains the origin, we only need to consider bends at incoming walls. 
This greatly reduces what remains to check since only initial walls \change{of $\scat^{\cAp}_\seed$ are incoming, and the number of initial walls of $\scat^{\cAp}_\seed$ is just  $\lrm{\Iuf}$.}
In addition to convexity in $\bL\otimes\R$ then, if $S$ contains $0$, we simply have to ask whether a broken line segment bending maximally at initial walls can leave $S$ if its endpoints are in $S$.
If this cannot happen, we know $S$ is positive. 

\red{Moreover, we can say more in this 2 dimensional case.  
It is clear (in any dimension) that each choice of seed will identify a broken line convex subset $S$ of $V^\trop(\R)$ with a convex subset of $\bL\otimes \R$.
However, it is {\emph{not}} clear that $S$ must be broken line convex if it is identified with a convex subset of $\bL\otimes \R$ by every choice of seed.
Mandel showed in \cite{Travis} that this indeed holds in dimension 2.
We recover his result by making the following observation.
If, after fixing a seed $\seed$, a broken line segment bends maximally toward the origin at an incoming wall, then this broken line segment is straight in the affine structure associated to a different choice of seed $\seed'$.
This is because the piecewise linear identification $T_k$ of \cite[Theorem~1.24]{GHKK} precisely straightens a maximal bending toward the origin at the corresponding wall $\wall_k$.}
\end{remark}

We finish with an example.  
As described in \thref{rk:compute}, \thref{MainThm} allows us to easily describe positive subsets of $V^\trop(\R)$ and in turn projective varieties compactifying cluster varieties.
We illustrate this idea below.

\begin{example}\thlabel{ex:G2}
Let $\cA$ be the finite-type cluster variety associated to the $G_2$ root system.
The exchange matrix is 
\eqn{\epsilon = \begin{pmatrix}
0 & 3 \\
-1 & 0
\end{pmatrix}.
}
Note that in this case the injectivity assumption of \cite{GHKK} is satisfied, so we can work directly with $\cA$ itself, as opposed to $\cAp$.
The scattering diagram is provided in \cite[Figure~1.2]{GHKK} and reproduced below for the reader's convenience.

\noindent
\begin{center}
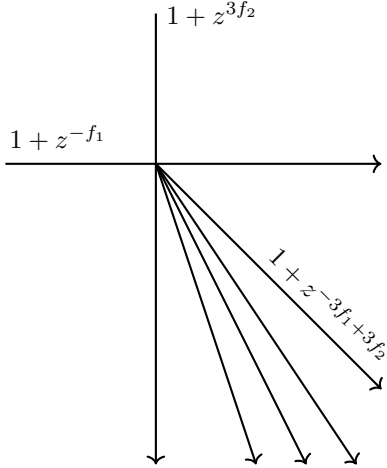

\begin{minipage}{.85\linewidth}
\captionsetup{type=figure}
\centering
\noindent
\begin{tikzpicture}
    \def\x{2}
    \def\d{1}
    \def\l{3}
    \def\op{.25}
    \definecolor{color1}{rgb}{0,1,.25}
    \definecolor{color2}{rgb}{0,.25,1}
    \colorlet{color3}{color1!50!color2}

    \path (-\x,0) coordinate (3) --++ (\x,0) coordinate (0) --++ (\l,0) coordinate (1);
    \path (0,\x) coordinate (2) --++ (0,-\x-\l-\d) coordinate (4) --++ (1.333*\d,0) coordinate (5)--++ (.667*\d,0) coordinate (6)--++ (.667*\d,0) coordinate (7);
    \coordinate (8) at (\l,-\l);

    \draw[name path = wall1, thick, ->] (3) -- (1);
    \draw[name path = wall2, thick, ->] (2) -- (4);
    \draw[name path = wall3, thick, ->] (0) -- (5);
    \draw[name path = wall4, thick, ->] (0) -- (6);
    \draw[name path = wall5, thick, ->] (0) -- (7);
    \draw[name path = wall6, thick, ->] (0) -- (8) node [pos=.7, sloped, above] {$1+z^{-3 f_1+3 f_2}$};

    \node at (.75,\x) {$1+z^{3 f_2}$};
    \node at (-\x+.7*\d,.35) {$1+z^{-f_1}$};

\end{tikzpicture}
\captionof{figure}{\label{fig:G2Scat} Scattering diagram associated to the $G_2$ root system.  Three walls in the fourth quadrant need a scattering function to be specified.  In clockwise order, they are as follows: $1+ z^{-2 f_1+3f_2}$, $1+ z^{-3 f_1+6f_2}$, and $1+ z^{- f_1+3f_2}$. } 
\end{minipage}
\end{center}

The primitive vectors along the rays in Figure~\ref{fig:G2Scat} are the $\gv$-vectors of cluster variables.
We will take the {\it{broken line convex hull}} of these points-- that is, the smallest broken line convex set containing all of the points-- in order to define a natural projective compactification of $\cA$.
As there are only two incoming walls (the coor.  Bossindinate axes) and bends at outgoing walls are always away from the origin, 
it is straightforward to verify that the broken line convex hull of these points is as follows.

\noindent
\begin{center}
\begin{minipage}{.85\linewidth}
\captionsetup{type=figure}
\begin{center}
\begin{tikzpicture}

    \def\x{2}
    \def\d{1}
    \def\l{3}
    \def\op{.25}
    \definecolor{color1}{rgb}{0,1,.25}
    \definecolor{color2}{rgb}{0,.25,1}
    \colorlet{color3}{color1!50!color2}

    \path (-\x,0) coordinate (3) --++ (\x,0) coordinate (0) --++ (\l,0) coordinate (1);
    \path (0,\x) coordinate (2) --++ (0,-\x-\l-\d) coordinate (4) --++ (1.333*\d,0) coordinate (5)--++ (.667*\d,0) coordinate (6)--++ (.667*\d,0) coordinate (7);
    \coordinate (8) at (\l,-\l);

    \draw[name path = wall1, thick, ->] (3) -- (1);
    \draw[name path = wall2, thick, ->] (2) -- (4);
    \draw[name path = wall3, thick, ->] (0) -- (5);
    \draw[name path = wall4, thick, ->] (0) -- (6);
    \draw[name path = wall5, thick, ->] (0) -- (7);
    \draw[name path = wall6, thick, ->] (0) -- (8);
 
    \node [circle, fill, inner sep = 1.5pt, color = color3] (v1) at (\d,0) {};
    \node [circle, fill, inner sep = 1.5pt, color = color3] (v2) at (0,\d) {};
    \node [circle, fill, inner sep = 1.5pt, color = color3] (v3) at (-\d,0) {};
    \node [circle, fill, inner sep = 1.5pt, color = color3] (v4) at (0,-\d) {};
    \node [circle, fill, inner sep = 1.5pt, color = color3] (v8) at (\d,-\d) {};
    \node [circle, fill, inner sep = 1.5pt, color = color3] (v6) at (\d,-2*\d) {};
    \node [circle, fill, inner sep = 1.5pt, color = color3] (v5) at (\d,-3*\d) {};
    \node [circle, fill, inner sep = 1.5pt, color = color3] (v7) at (2*\d,-3*\d) {};

    \coordinate (vnew) at (0,1.5*\d);

    \draw[color3, thick, fill= color3, fill opacity=\op] (v1.center) -- (vnew) -- (v3.center) -- (v5.center) -- (v7.center) -- cycle;
    
    \node at (.6*\d,1.6*\d) {$\lrp{0,\frac{3}{2}}$}; 
    
\end{tikzpicture}

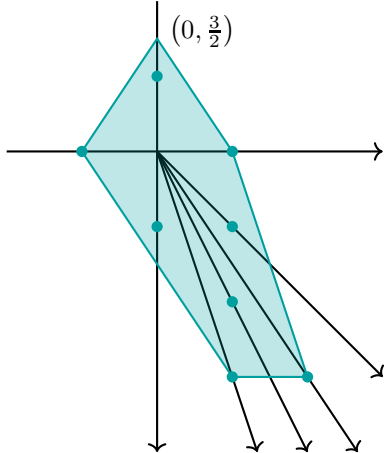
\captionof{figure}{\label{fig:G2Poly} Broken line convex hull $S$ of $\gv$-vectors of cluster variables for $G_2$ cluster variety.
The $\gv$-vector of each cluster variable is displayed as a dot.  Observe that the boundary of $S$ consists of 4 broken line segments, each having a pair of $\gv$-vectors as end points and crossing a single wall.} 

\end{center}
\end{minipage}
\end{center}

Note that in this example, if we were to take the convex hull of these points using the vector space structure on $M^\circ \otimes\R$ given by the choice of initial seed, 
we would get a smaller polygon.

\noindent
\begin{center}
\begin{minipage}{.85\linewidth}
\captionsetup{type=figure}
\begin{center}
\begin{tikzpicture}

    \def\x{2}
    \def\d{1}
    \def\l{3}
    \def\op{.25}
    \definecolor{color1}{rgb}{0,1,.25}
    \definecolor{color2}{rgb}{0,.25,1}
    \colorlet{color3}{color1!50!color2}

    \path (-\x,0) coordinate (3) --++ (\x,0) coordinate (0) --++ (\l,0) coordinate (1);
    \path (0,\x) coordinate (2) --++ (0,-\x-\l-\d) coordinate (4) --++ (1.333*\d,0) coordinate (5)--++ (.667*\d,0) coordinate (6)--++ (.667*\d,0) coordinate (7);
    \coordinate (8) at (\l,-\l);

    \draw[name path = wall1, thick, ->] (3) -- (1);
    \draw[name path = wall2, thick, ->] (2) -- (4);
    \draw[name path = wall3, thick, ->] (0) -- (5);
    \draw[name path = wall4, thick, ->] (0) -- (6);
    \draw[name path = wall5, thick, ->] (0) -- (7);
    \draw[name path = wall6, thick, ->] (0) -- (8);
    
    \node [circle, fill, inner sep = 1.5pt, color = color3] (v1) at (\d,0) {};
    \node [circle, fill, inner sep = 1.5pt, color = color3] (v2) at (0,\d) {};
    \node [circle, fill, inner sep = 1.5pt, color = color3] (v3) at (-\d,0) {};
    \node [circle, fill, inner sep = 1.5pt, color = color3] (v4) at (0,-\d) {};
    \node [circle, fill, inner sep = 1.5pt, color = color3] (v8) at (\d,-\d) {};
    \node [circle, fill, inner sep = 1.5pt, color = color3] (v6) at (\d,-2*\d) {};
    \node [circle, fill, inner sep = 1.5pt, color = color3] (v5) at (\d,-3*\d) {};
    \node [circle, fill, inner sep = 1.5pt, color = color3] (v7) at (2*\d,-3*\d) {};

    \coordinate (vnew) at (0,1.5*\d);

    \draw[color3, thick, fill= color3, fill opacity=\op] (v1.center) -- (v2.center) -- (v3.center) -- (v5.center) -- (v7.center) -- cycle;

    \draw[blue] (v1) -- (vnew) -- (v3);
    
    \node at (.6*\d,1.6*\d) {$\lrp{0,\frac{3}{2}}$}; 
    \node[color= blue] at (-.6*\d,.9*\d) {$\tilde{\gamma}$}; 
    
\end{tikzpicture}

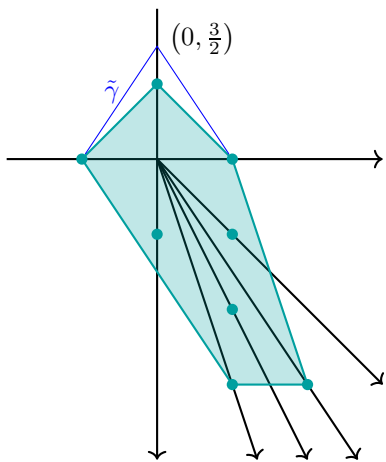
\captionof{figure}{\label{fig:G2ConvHull} The usual convex hull of $\gv$-vectors of cluster variables in $M^\circ \otimes \R$ is not broken line convex as the endpoints of $\tilde{\gamma}$ are contained in this set but $\Supp(\tilde{\gamma})$ is not.} 

\end{center}
\end{minipage}
\end{center}

The missing points prevent the smaller polygon from being positive and defining a projective variety-- $\tf_{\lrp{1,0}} \cdot \tf_{\lrp{-1,0}} =  \tf_{\lrp{0,0}} + \tf_{\lrp{0,3}}$, but $\lrp{0,3}$ is not contained in twice the convex hull of the $\gv$-vectors.
Using the broken line convex hull on the other hand, \thref{MainThm} implies that $S$ and its dilations define a polarized projective variety compactifying $\cA$. 
\end{example}

\bibliographystyle{hep}
\bibliography{bibliography.bib}

\noindent{\sc{Man-Wai Cheung\\
Department of Mathematics, One Oxford Street, Cambridge, Harvard University, MA 02138, USA}}\\
{\it{e-mail:}} \href{mailto:mwcheung@math.harvard.edu}{mwcheung@math.harvard.edu} \medskip

\change{\noindent{\sc{Timothy Magee\\
Department of Mathematics,
Faculty of Natural \& Mathematical Sciences,
King's College London,
Strand,
London WC2R 2LS,
United Kingdom}}\\
{\it{e-mail:}} \href{mailto:timothy.magee@kcl.ac.uk}{timothy.magee@kcl.ac.uk} }\medskip

\noindent{\sc{Alfredo N\'ajera Ch\'avez\\
CONACyT-Instituto de Matem\'aticas UNAM Unidad Oaxaca, Le\'on 2, altos, Oaxaca de Ju\'arez, Centro Hist\'orico, 68000 Oaxaca, Mexico}}\\
{\it{e-mail:}} \href{mailto:najera@matem.unam.mx}{najera@matem.unam.mx}

\end{document}